\documentclass[pdftex]{amsart} 

\usepackage{array}
\usepackage{enumerate}
\usepackage[margin=2.0cm]{geometry}
\usepackage[active]{srcltx}
\usepackage{amsfonts,amssymb,amsmath,latexsym,upref}
\usepackage{verbatim} 
\usepackage[english]{babel}
\usepackage[latin1]{inputenc}  
\usepackage{color}

\usepackage{tikz}
\usepackage{pgf}
\usetikzlibrary{shapes, intersections, calc, patterns, positioning,
  matrix, arrows, fit, backgrounds, 
  decorations.pathmorphing, decorations.markings}



\newtheorem{lemma}[equation]{Lemma}
\newtheorem{prop}[equation]{Proposition}
\newtheorem{thm}[equation]{Theorem}
\newtheorem{cor}[equation]{Corollary}

\newtheorem{defn}[equation]{Definition}
\newtheorem{conj}[equation]{Conjecture}

\theoremstyle{definition}
\newtheorem{exmp}[equation]{Example}

\newtheorem{rmk}[equation]{Remark}

\newtheorem{notation}[equation]{Notation}

\numberwithin{equation}{section}


\newcommand{\Z}{\mathbf{Z}}

\newcommand{\Q}{\mathbf{Q}}
\newcommand{\C}{\mathbf{C}}
\newcommand{\F}{\mathbf{F}}
\newcommand{\R}{\mathbf{R}}

\newcommand{\grp}[1]{\varphi_2(#1)} 


\newcommand{\SOodd}[2]{\operatorname{SO}_{#1}(#2)}
\newcommand{\SOeven}[3]{\operatorname{SO}^{#1}_{#2}(#3)}

\newcommand{\GL}[3][{}]{\operatorname{GL}^{#1}_{#2}(#3)}
\newcommand{\SL}[3][{}]{\operatorname{SL}^{#1}_{#2}(#3)}

\newcommand{\Sp}[2]{\operatorname{Sp}_{#1}(#2)}



\newcommand{\Ob}[1]{\mathrm{Ob}(#1)}

\newcommand{\cat}[3]{\mathcal{#1}_{#2}^{#3}}  
\newcommand{\catp}[4][p+]{\mathcal{#2}_{#3}^{#1#4}}  


\newcommand{\m}{morphism}

\newcommand{\gen}[1]{{\langle}#1{\rangle}}

\newcommand{\syl}[1]{Sylow $#1$-subgroup}
\newcommand{\we}{weighting}
\newcommand{\Mb}{M\"obius}
\newcommand{\Euc}{Euler characteristic}
\newcommand{\rchi}{\widetilde{\chi}}
\newcommand{\ralpha}{\widetilde{\alpha}}
\newcommand{\wa}{\widetilde{a}}
\newcommand{\Syl}[2]{\operatorname{Syl}_{#1}(#2)}

\newcommand{\sd}[1]{\operatorname{sd}(#1)}

\newcommand{\B}{\operatorname{B}}

\newcommand{\mynote}[1]{\noindent{\textcolor{red}{\textbf{[#1]}}}}


\newcommand{\Irr}[1]{\operatorname{Irr}(#1)}
\newcommand{\KB}{\operatorname{KB}}
\newcommand{\OP}{\operatorname{OP}}

\pagestyle{headings}

\title{Euler characteristics of centralizer subcategories}

\author{Jesper M.~M\o ller}
\address{Institut for Matematiske Fag\\
  Universitetsparken 5\\
  DK--2100 K\o benhavn}
\email{moller@math.ku.dk}
\urladdr{htpp://www.math.ku.dk/~moller}

\thanks{Supported by the Danish National Research Foundation through
  the Centre for Symmetry and Deformation (DNRF92) and by Villum
  Fonden through the project Experimental Mathematics in Number
  Theory, Operator Algebras, and Topology} \thanks{\tiny
  /home/moller/projects/euler/orbit/alperin03.tex}

\usepackage[bookmarks=true,bookmarksopen=false]{hyperref}
\hypersetup{
   pdftitle = {},
   pdfauthor = {Jesper Michael Møller},
   pdfpagemode = {UseOutlines},
   pdfstartview = {FitH},
   pdfborder = {0 0 0},
   backref = {true},
   colorlinks = {true},
   urlcolor = {blue},
   citecolor = {blue},
   linkcolor = {blue},
   pdftoolbar = {true},
}

\subjclass[2000]{05E15, 20J15} 
\keywords{\Euc, Brown poset, Quillen poset, Bouc
  poset, Alperin Weight Conjecture, Kn\"orr--Robinson Conjecture} 

\begin{document}
\date{\today}
\maketitle
\tableofcontents

\begin{abstract}
  Let $G$ be a finite group and $p$ a prime number.  By a theorem of
  K.S.\@ Brown, the reduced \Euc\ of $\cat SG{p+*}$, the poset of
  nonidentity $p$-subgroups of $G$, is a multiplum of $|G|_p$, the
  $p$-part of the group order.  We prove here an equivariant version
  of Brown's theorem for the case where $G$ is equipped with an
  action of a finite group $A$. The equivariant version asserts that
  the reduced \Euc\ of $C_{\cat SG{p+*}}(A)$, the poset of nonidentity
  $A$-normalized $p$-subgroups of $G$, is a multiplum of $|C_G(A)|_p$,
  the $p$-part of the centralizer order.  

  We also examine the higher equivariant \Euc s $\chi_r(\cat
  SG{p+*},G)$, $r \geq 0$, introduced by Atiyah and Segal, for the
  conjugation action of $G$ on $\cat SG{p+*}$. We observe that the
  $r$th equivariant \Euc s can be computed in the class function space
  from a virtual rational representation of $G$, the $r$th Euler class
  function. The second equivariant \Euc\ is especially interesting in
  connection with the Kn\"orr--Robinson conjecture. 
\end{abstract}

\section{Introduction}
\label{sec:intro}


Let $A$ be a finite group and $\cat C{}{}$ a finite $A$-category.  For
any subset $X$ of the action group $A$, the $X$-centralized
subcategory, $C_{\cat C{}{}}(X)$, is the subcategory of $\cat C{}{}$
consisting of all $\cat C{}{}$-\m s $c \xrightarrow{\varphi} d$ such
that $c^x=c$, $d^x=d$ and $\varphi^x= \varphi$
for all endofunctors $x$ in $X$.  Atiyah and Segal
\cite{atiyah&segal89} define the $r$th, $r \geq 0$, equivariant \Euc\
to be the normalized sum
\begin{equation*}
  \chi_r(\cat C{}{},A) = \frac{1}{|A|} \sum_{X \in C_r(A)}
  \chi(C_{\cat C{}{}}(X))
\end{equation*}
of the \Euc s of the $X$-centralized subcategories as $X$ ranges over
the set $C_r(A)$ of commuting $r$-tuples in $A$.  We are implicitly
assuming for instance that $\cat C{}{}$ is an $\mathrm{EI}$-category
so that $X$-centralized subcategories have \Euc s in the sense of
Leinster \cite{leinster08}.

Alternatively, the equivariant \Euc s may  be computed in
the virtual rational representation ring $\Q \otimes R_\Q(A)$ of $A$:
The $r$th equivariant \Euc\ is the character inner product
\begin{equation*}
   \chi_r(\cat C{}{},A) = \gen{\alpha_r(\cat C{}{},A),|C_A|}_A
\end{equation*}
of the $r$th Euler class function of $(\cat C{}{},A)$,
\begin{equation*}
  \alpha_r(\cat C{}{},A) \colon A \to \Q, x \to \chi_{r-1}(C_{\cat
    C{}{}}(x),C_A(x)), \qquad r \geq 1, 
\end{equation*}
and the permutation character $|C_A|$ for the action of $A$ on itself
by conjugation (Definition~\ref{defn:classfct}).

An especially interesting case of this general set-up arises when $G$
a finite group, $p$ is a prime, and $\cat C{}{} = \cat SG{p+*}$ is the
Brown poset of nontrivial $p$-subgroups of $G$, and $A=G$ acts by
conjugation.  The $G$-poset $\cat SG{p+*}$ has equivariant \Euc s
$\chi_r(\cat SG{p+*},G)$ defined for every $r \geq 0$.  We shall
mainly focus on the cases $r=0,1,2$ as the significance of the higher
equivariant \Euc s for $r \geq 3$ remains unclear
(Remark~\ref{exmp:3conjs}).  It seems that these three first
equivariant \Euc s carry some interesting information.  For $r=0$ it
is immediate from the definition that the equivariant \Euc\ of the
Brown poset
\begin{equation*}
  \chi_0(\cat SG{p+*},G) = \chi(\cat SG{p+*})/|G|
\end{equation*} 
is simply the usual \Euc\ divided by the order of $G$. The strong form
of a conjecture by Quillen asserts that
\begin{equation*}
    \chi_0(\cat SG{p+*},G)=0 \iff O_p(G) \neq 1
\end{equation*}
For $r=1$, Webb \cite{webb87} proved that
\begin{equation*}
  \chi_1(\cat SG{p+*},G) = 1
\end{equation*}
when $\cat SG{p+*}$ is nonempty.
For $r=2$, Kn\"orr and Robinson
\cite{knorr_robinson:89,thevenaz93Alperin} conjecture that
\begin{equation*}
  \chi_2(\cat SG{p+*},G) = k(G)-z_p(G) 
\end{equation*}
The Kn\"orr--Robinson Conjecture is known to be equivalent to (the
non-blockwise form of) Alperin's Weight Conjecture. (See
Notation~\ref{notation} for explanations of the symbols $k(G)$ and
$z_p(G)$.)

A little more generally, suppose $G$ is an $A$-group. The poset $\cat
SG{p}$ of $p$-subgroups is then an $A$-poset. The objects of the
centralized subposet $C_{\cat SG{p}}(A)$ are the $A$-normalized
$p$-subgroups of $G$. For any such $A$-normalized $p$-subgroup $H$ of
$G$, the sub-quotient $N_G(H)/H$ is an $A$-group with associated
centralized Brown poset, $C_{\cat S{N_G(H)/H}{p+*}}(A)$.
%
The main result of this paper explores the local \Euc s given by the
function $H \to -\rchi(C_{\cat S{N_G(H)/H}{p+*}}(A))$ defined for $H \in
C_{\cat SG{p}}(A)$.  It is easy to see that this function vanishes off
the poset $C_{\cat SG{p+\mathrm{rad}}}(A)$ of $A$-normalized
$G$-radical $p$-subgroups of $G$ (Lemma~\ref{lemma:OpHA}).


\begin{thm}\label{thm:global}
  Let $G$ be a finite $A$-group and $p$ a prime.
  \begin{enumerate}

  \item \label{thm:global1} If $|C_G(A)_p|$ denotes the number of
    $p$-singular elements of $G$ centralized by $A$ then
  \begin{equation*}
     \sum_{H \in C_{\cat SG{p+\mathrm{rad}}}(A)}
  -\rchi(C_{\cat S{N_G(H)/H}{p+*}}(A)) |C_H(A)| = 
  |C_G(A)_p|
  \end{equation*}
 

  \item \label{thm:global0} For any $A$-normalized
   $G$-radical $p$-subgroup $H$ of $G$
    \begin{equation*}
      \sum_{\substack{H \leq K \in C_{\cat SG{p+\mathrm{rad}}}(A)}} 
        -\rchi(C_{\cat S{N_G(K)/K}{p+*}}(A)) =1 
    \end{equation*}
   where the sum runs over the set of $A$-normalized $G$-radical
   $p$-subgroups $K$ containing $H$.



\item \label{thm:global2} If $|C_G(A)|_p$ denotes the $p$-part of the
  order of the centralizer subgroup then
  \begin{equation*}
  |C_G(A)|_p \mid \rchi(C_{\catp SG*}(A))  
  \end{equation*}
  
  \end{enumerate}
\end{thm}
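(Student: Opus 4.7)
My plan is to attack part (3) first via a Brown-style Lefschetz/Frobenius argument, and then to obtain parts (1) and (2) as local Möbius identities on the $A$-normalized $G$-radical subposet.

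For (3), set $L = C_G(A)$; conjugation by $L$ preserves the centralized Brown poset $\mathcal{P} = C_{\cat SG{p+*}}(A)$. The crucial geometric observation is that for every nonidentity $p$-element $g \in L$, the fixed subposet $\mathcal{P}^g$ is contractible. Indeed, $\langle g\rangle$ lies in $\mathcal{P}$ (it is a nonidentity $p$-subgroup centralized by $A$), and for any $Q \in \mathcal{P}^g$ the product $Q\langle g\rangle$ is a $p$-subgroup of $G$ (since $g$ normalizes $Q$) that remains $A$-normalized (since $A$ normalizes $Q$ and centralizes $\langle g\rangle$). Thus $Q \mapsto Q\langle g\rangle$ is a closure operator on $\mathcal{P}^g$ whose image is the up-set of $\langle g\rangle$ in $\mathcal{P}$; this has minimum $\langle g\rangle$, hence is contractible, so $\chi(\mathcal{P}^g) = 1$. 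By the Lefschetz fixed-point theorem, $g \mapsto \chi(\mathcal{P}^g)$ is the character of the virtual rational $L$-representation $\sum_i (-1)^i H_i(|\mathcal{P}|;\Q)$, so $\phi(g) := \chi(\mathcal{P}^g) - 1$ is a virtual character of $L$ vanishing on every nonidentity $p$-element. Restricting $\phi$ to a Sylow $p$-subgroup $P \leq L$ yields a virtual character of $P$ supported at the identity; any such character is an integer multiple of the regular character of $P$, whence $|P| = |C_G(A)|_p$ divides $\phi(1) = \rchi(\mathcal{P})$, proving (3).

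For (2), I would exploit the order-preserving bijection $K \leftrightarrow K/H$ between $A$-normalized $G$-radical $p$-subgroups $K \geq H$ and $A$-normalized radical $p$-subgroups of the subquotient $\bar G = N_G(H)/H$, under which $N_G(K)/K$ is identified with $N_{\bar G}(K/H)/(K/H)$. By Lemma~\ref{lemma:OpHA} the local Euler-characteristic function $f(K) = -\rchi(C_{\cat S{N_G(K)/K}{p+*}}(A))$ vanishes off the radical subposet, so the left-hand side of (2) rewrites as a full radical-subposet sum for $\bar G$, which an equivariant analogue of the Thévenaz--Webb radical decomposition of the reduced Euler characteristic evaluates to $1$. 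Part (1) then follows by writing $|C_H(A)| = |\{x \in C_G(A)_p : x \in H\}|$, interchanging the sums, and using (2) applied at each $\langle x\rangle$ (Lemma~\ref{lemma:OpHA} again kills the non-radical terms when $\langle x\rangle$ is not itself radical).

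The main obstacle will be establishing the equivariant radical decomposition needed for (2): although the non-equivariant Thévenaz--Webb identity is classical, in the $A$-equivariant setting one must simultaneously track $A$-normalization and radicality in the subquotient $N_G(H)/H$, and verify that the $K \leftrightarrow K/H$ correspondence respects both. Once (2) is in place, the deduction of (1) is bookkeeping, while (3) follows independently by the Lefschetz/Frobenius argument above and reduces, in the case $A = 1$, to Brown's original theorem.
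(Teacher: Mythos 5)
Your proposal contains a mix of a genuinely different and valid argument, a clever and valid shortcut, and one real gap.

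\textbf{Part (3).} Your Lefschetz/virtual-character argument is correct and is at heart the same as the paper's second proof, which follows Quillen's original relative--Euler--characteristic argument. You check that for any nonidentity $p$-element $g \in C_G(A)$ the fixed subposet $\mathcal{P}^g$ is conically contractible via $Q \mapsto Q\gen{g}$ (the verifications that $Q\gen{g}$ stays $A$-normalized and $g$-fixed are correct), so $\rchi(\mathcal{P}^g) = 0$; then $g \mapsto \rchi(\mathcal{P}^g)$ is a virtual character of $C_G(A)$, and restricting to a Sylow $p$-subgroup $P$ gives a virtual character of $P$ supported at the identity, hence divisible by $|P| = |C_G(A)|_p$ at the identity. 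This is a clean repackaging of the same underlying idea.

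\textbf{Part (2): this is where the gap is.} The bijection you want --- $A$-normalized $G$-radical $p$-subgroups $K \geq H$ corresponding to $A$-normalized radical $p$-subgroups of $\bar G = N_G(H)/H$ via $K \leftrightarrow K/H$, with $N_G(K)/K$ matching $N_{\bar G}(K/H)/(K/H)$ --- does not exist. A $G$-radical $p$-subgroup $K$ strictly containing $H$ need not lie in $N_G(H)$ (containment of $p$-subgroups does not force normalization), so $K/H$ is not even defined; and even when $K \leq N_G(H)$, $G$-radicality of $K$ and $\bar G$-radicality of $K/H$ are genuinely different conditions, while $N_G(K)$ is generally larger than $N_G(K)\cap N_G(H)$, so the automorphism groups differ. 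The paper's proof of (2) does not use any such correspondence. Instead it uses Leinster's weighting formalism: by Lemma~\ref{lemma:wtA} the weighting on $C_{\cat SGp}(A)$ at $H$ is precisely $-\rchi(C_{\cat S{\cat OG{}(H)}{p+*}}(A))$; this weighting restricts from $C_{\cat SGp}(A)$ to the left ideal of $A$-normalized $p$-subgroups containing $K$, which is contractible (it has $K$ as a minimum), so the weighting summed over that ideal is $1$; and by Lemma~\ref{lemma:OpHA} the weighting is supported on $G$-radical subgroups. No bijection with the subquotient is invoked, only the adjunction equivalence $H // C_{\cat SG{p+*}}(A) \simeq C_{\cat S{\cat OG{}(H)}{p+*}}(A)$ used to identify the weighting values.

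\textbf{Part (1).} Your derivation of (1) from (2) is valid and is in fact a \emph{simpler} route than the paper's. You write $|C_H(A)| = |H \cap C_G(A)|$, observe that every element of $H \cap C_G(A)$ is $p$-singular, interchange the two sums, and for each $p$-singular $x \in C_G(A)$ apply the identity of part (2) at the $A$-normalized $p$-subgroup $\gen{x}$ to get that the inner sum equals $1$. The one thing to be explicit about is that you need (2) in the slightly stronger form where the base subgroup $K$ is allowed to be any $A$-normalized $p$-subgroup, not merely a radical one --- but the paper's weighting proof of (2) establishes exactly this, since the contractibility of the left ideal above $K$ requires nothing of $K$. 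The paper instead proves (1) by computing the Euler characteristic of the modified orbit category $\cat O{(G,A)}p$ twice, once from a coweighting (giving the density of $p$-singular elements in $C_G(A)$, Proposition~\ref{prop:coweCOGpA}) and once from a weighting (giving the local terms, Proposition~\ref{prop:weCOGpA}); that route is more machinery-heavy but pays off later when the same orbit-category framework is used for the Frobenius--Brown equivalence. Your interchange-of-sums argument trades that extra structure for directness --- a legitimate and instructive alternative, but only once (2) has been fixed.
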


The proof of Theorem~\ref{thm:global}, to be found in
\S\ref{sec:chiSGpA} and \S\ref{sec:chiCOGA}, is obtained by analyzing
the \Euc s of the orbit category $\cat OGp$ of $p$-subgroups of $G$
and (a category related to) its $A$-centralized subcategory $C_{\cat
  OGp}(A)$ (Propositions \ref{prop:coweOGp}, \ref{prop:coweOGp},
\ref{prop:coweCOGpA}, \ref{prop:weCOGpA}).
Theorem~\ref{thm:global}.\eqref{thm:global1} expresses the number of
$A$-centralized $p$-singular elements in $G$ as an affine combination
of the number of $A$-centralized elements in the $G$-radical
$p$-subgroups of $G$. The divisibility statement of
Theorem~\ref{thm:global}.\eqref{thm:global2} follows from this affine
relation by induction over group order.

By the theorem of K.S.\@ Brown \cite{brown75}, reproved by Quillen
\cite{quillen78} and Webb \cite{webb87} among others, the $p$-part of
the group order divides the reduced \Euc\ of the Brown poset.
Theorem~\ref{thm:global}.\eqref{thm:global2} is an equivariant
generalization of Brown's theorem.

In the special case where $A$ is trivial, item \eqref{thm:global2} of
Theorem~\ref{thm:global} reduces to Brown's theorem while item
\eqref{thm:global1},
\begin{equation}\label{eq:global1}
   \sum_{H \in \cat SG{p+\mathrm{rad}}}
  -\rchi(\cat S{N_G(H)/H}{p+*})|H| = |G_p|
\end{equation}
expresses the number of $p$-singular elements in $G$ as an affine
combination of the orders of the $G$-radical $p$-subgroups.  Frobenius
proved in $1907$ (or even earlier) that $|G|_p$ divides $|G_p|$
\cite{frobenius:1907,isaacsrobinson}.  Brown's theorem, $|G|_p$
divides $\rchi(\cat SG{p+*})$, came much later in $1975$.  The affine
relation \eqref{eq:global1} provides a link between these two theorems
showing that they actually are equivalent
(Theorem~\ref{thm:frobeniusbrown}).  Thus the theorems of Sylow,
Frobenius, and Brown are equivalent.

We note that already in \cite[Theorem 6.3]{HIO89} Hawkes, Isaacs, and
\"Ozaydin prove a more general version of Equation~\eqref{eq:global1}
and also observe the connection in one direction between the theorems
of Brown and Frobenius.

In Section~\ref{sec:KRC} we compute the second equivariant \Euc\
$\chi_2(\cat SG{p+*},G)$ for the Mathieu group $G= M_{11}$ and check
the validity of the Kn\"orr--Robinson conjecture for this group
(Example~\ref{exmp:M11}). We also compute (Table~\ref{tab:artinM11})
the uniquely determined Artin decomposition of the second equivariant
Euler class function
\begin{equation*}
  \ralpha_2(\cat SG{p+*},G) = 
  \sum_{[C] \in [\cat SG{p'+\mathrm{cyc}}]} 
   \wa_2(\cat SG{p+*},G)(C) \frac{1_C^G}{|N_G(C):C|} 
\end{equation*}
into $\Z$-linear combinations of the class functions
$|N_G(C):C|^{-1} 1_C^G$ where $C$ runs through the set of
$p$-regular cyclic subgroups of $G$ (Corollary~\ref{cor:OpH}).

\subsection{Notation}
\label{sec:notation}
The following definitions and notation will be used throughout this paper:
\begin{defn}\label{defn:basic}
Let $G$ be a finite group and $p$ a prime number.
  \begin{enumerate}
  \item An element $g \in G$ of order $|g|$ is $p$-regular if $ p \nmid |g|$,
$p$-irregular if $p \mid |g|$, and $p$-singular if $|g|$ is a power
of $p$ \cite[Definition 40.2, \S82.1]{cr} \label{defn:basic1}
\item A finite group $A$ is $p$-regular if $p \nmid |A|$  \label{defn:basic2}
\item An irreducible $\C$-character on $G$ has $p$-defect $0$ if $p$
  does not divide
  $|G|/\chi(1)$ \cite[p 134]{isaacs} \label{defn:basic3}
\item $O_p(G)$ is the largest normal $p$-subgroup of $G$
\item 
A $p$-subgroup $H$ of $G$ is $G$-radical if $H=O_p(N_G(H))$
\cite{AlpFong} \label{defn:basic4} 
  \end{enumerate}
\end{defn}


\begin{notation}\label{notation}
Let $G$ be a finite group and $p$ a prime number.

\begin{itemize}

\item $G_p = \bigcup \mathrm{Syl}_p(G)$ is the set of $p$-singular
  elements of $G$, the union of the \syl ps

\item $|G|_p$ is the $p$-part and $|G|_{p'}$ the $p'$-part of the
  group order $|G| = |G|_p |G|_{p'}$

\item $[G]$ is the set of conjugacy classes of elements of $G$

\item $k(G) =|[G]| = | \Irr G| $ is the number of irreducible
  $\C$-characters of $G$

\item $z_p(G) = | \{ \chi \in \Irr G \mid |G|_p \mid \chi(1) \}|$ is
  the number of irreducible $\C$-characters of $p$-defect $0$

\item $k_{p'}(G) = | \{ [g] \in [G] \mid p \nmid |g| \}|$ is the number
  of $p$-regular conjugacy classes in $G$

\item $\cat SG{}$ is the poset of subgroups of $G$
   and $[\cat SG{}]$ is the set of conjugacy
  classes of subgroups (Section~\ref{sec:centposet})

\item $N_G(H,K) = \{g \in G \mid H^g \leq K \}$ is the transporter set

\item $\cat OG{}$ is the orbit category of $G$
   with \m\ sets $\cat OG{}(H,K) = N_G(H,K)/K$ (Section~\ref{sec:orbit})

\item $\cat FG{}$ is the fusion category of $G$ with \m\ sets $\cat
  FG{}(H,K) = C_G(H) \backslash N_G(H,K)$

\item $R_{\C}(G)$ is the ring of virtual $\C$-characters and
  $R_{\Q}(G)$ the subring of virtual $\Q$-characters of $G$
  \cite[12.1]{serre77}

\end{itemize}
If $\cat C{}{}$ is a finite category of subgroups of $G$ then $\cat
C{}*$ $ \cat C{}p$, $\cat C{}{p'}$, $\cat C{}{\mathrm{abe}}$, $\cat
C{}{\mathrm{eab}}$, $\cat C{}{\mathrm{rad}}$ is the full subcategory
of $\cat C{}{}$ generated by all nonidentity subgroups, $p$-subgroups,
$p$-regular subgroups, abelian subgroups, elementary abelian
subgroups, $G$-radical $p$-subgroups, respectively.  We shall also use
various combinations of superscripts; $\cat C{}{p+*+\mathrm{rad}}$,
for instance, denotes the full subcategory of $\cat C{}{}$ generated
by all nonidentity $G$-radical $p$-subgroups of $G$. $\cat
C{}{}(H,K)$ denotes the set of $\cat C{}{}$-\m s from $H$ to $K$ and $\cat
C{}{}(H) = \cat C{}{}(H,H)$ for the $\cat C{}{}$-endo\m\ monoid of
$H$. $H/\cat C{}{}$ is the coslice of $\cat C{}{}$ under $H$ and 
$H//\cat C{}{}$ the strict coslice of nonisomorphisms under $H$
\cite[Definition 3.2]{gm:2012}. 
\end{notation}


\section{$A$-categories and $A$-posets}
\label{sec:Gcat}

Let $A$ be a finite group, $\cat C{}{}$ a small $A$-category, and $X$
a subset of $A$.

\begin{defn}\label{defn:centralizercat}
  The $X$-centralized subcategory of $\cat C{}{}$ is the subcategory,
  $C_{\cat C{}{}}(X)$, of $\cat C{}{}$ with objects $\Ob{C_{\cat
      C{}{}}(X)} = C_{\Ob{\cat C{}{}}}(X)$ and with \m\ sets $C_{\cat
    C{}{}}(X)(c,d) = C_{\cat C{}{}(c,d)}(X)$, $c,d \in \Ob{C_{\cat
      C{}{}}(X)}$. 
\end{defn}

In mathematical symbols, the $X$-centralized subcategory $C_{\cat
  C{}{}}(X)$ is the subcategory of $\cat C{}{}$ consisting of all \m s
$c \xrightarrow{\varphi} d$ in $\cat C{}{}$ such that $(c \to d) =
(c^x \xrightarrow{\varphi^x} d^x)$ for all $x \in X$. In words, the
$X$-centralized subcategory $C_{\cat C{}{}}(X)$ is the category of
$X$-stable $\cat C{}{}$-objects with $X$-stable $\cat C{}{}$-\m s
between them.  $C_{\cat C{}{}}(X)$ is an $N_A(X)$-category.


If $X_1$ and $X_2$ are conjugate subsets of $A$, then their
centralizers in $\cat C{}{}$ are isomorphic categories. Indeed,
$C_{\cat C{}{}}(X^g) = C_{\cat C{}{}}(X)^g$ for any element $g \in A$
and any subset $X \subseteq A$.

For later reference we define $A$-adjunctions between $A$-categories
and note that there are induced $N_A(X)$-adjunctions between
centralizer categories.





\begin{defn}\label{defn:Gadj}
  A $A$-adjunction between the $A$-categories $\mathcal{C}$ and
  $\mathcal{D}$ is a quadruple $(L,R,\eta,\varepsilon)$ consisting of
  $A$-functors, $R$ and $L$, and $A$-natural transformations, $\eta$
  (the unit)
  and $\varepsilon$ (the counit),
  \begin{center}
    \begin{tikzpicture}[>=stealth']
 
    \tikzset{dbl/.style={double,
                     double equal sign distance,
                     -implies}}
    
    \matrix (dia) [matrix of math nodes, column sep=20pt, row
    sep=2pt]{
    \cat C{}{}  & \cat D{}{} & & 1_{\cat C{}{}} & LR \\
    \cat C{}{}  & \cat D{}{} & & 1_{\cat D{}{}} & RL \\};
    \draw[->] (dia-1-1) -- (dia-1-2) node[pos=.5,above] {$R$};
    \draw[dbl] 
    (dia-1-4.10) -- (dia-1-4.10 -| dia-1-5.west) 
    node[pos=.5,above] {$\eta$};
    \draw[->] (dia-2-2) -- (dia-2-1) node[pos=.5,below] {$L$};
    \draw[dbl] 
    (dia-2-5.180) -- (dia-2-5.180 -| dia-2-4.east)
    node[pos=.5,below] {$\varepsilon$};
    \end{tikzpicture}
  \end{center}
  such that the $A$-natural transformations
  \begin{center}
    \begin{tikzpicture}[>=stealth']
    
    \tikzset{dbl/.style={double,
                     double equal sign distance,
                     -implies}}
      
    \matrix (dia) [matrix of math nodes, column sep=20pt, row
    sep=5pt]{
      L & LRL & L & & R & RLR & R \\};
    \draw[dbl]
    (dia-1-1)--(dia-1-2)
    node[pos=.5,above] {$\eta L$};
    \draw[dbl]
    (dia-1-2)--(dia-1-3)
    node[pos=.5,above] {$L \varepsilon$};
    \draw[dbl]
    (dia-1-5)--(dia-1-6)
    node[pos=.5,above] {$R \eta $};
    \draw[dbl]
    (dia-1-6)--(dia-1-7)
    node[pos=.5,above] {$\varepsilon R$};
    \end{tikzpicture}
  \end{center}
  are the identity transformations.
\end{defn}

\begin{prop}\label{prop:adjunction}
  Suppose that $(L,R,\eta,\varepsilon)$ is a $A$-adjunction between the
  $A$-categories $\mathcal{C}$ and $\mathcal{D}$.
  \begin{enumerate}
  \item The induced maps  
  \begin{tikzpicture}[>=stealth', baseline=(current bounding box.-2)]
  \matrix (dia) [matrix of math nodes, column sep=25pt, row sep=20pt]{
  B\mathcal{C} & B\mathcal{D} \\};
  \draw[->] ($(dia-1-1.east)+(0,0.1)$) -- ($(dia-1-2.west)+(0,0.1)$) 
  node[pos=.5,above] {$BR$}; 
  \draw[->] ($(dia-1-2.west)-(0,0.1)$) -- ($(dia-1-1.east)-(0,0.1)$)
   node[pos=.5,below] {$BL$} ; 
 \end{tikzpicture}
 are $A$-homotopy equivalences between $A$-spaces. 
  \item There is an induced $N_A(X)$-adjunction between the
    $N_A(X)$-categories $C_{\cat C{}{}}(X)$ and $C_{\cat D{}{}}(X)$
  and the induced maps  
  \begin{tikzpicture}[>=stealth', baseline=(current bounding box.-2)]
  \matrix (dia) [matrix of math nodes, column sep=25pt, row sep=20pt]{
  BC_{\mathcal{C}}(X) & BC_{\mathcal{D}}(X) \\};
  \draw[->] ($(dia-1-1.east)+(0,0.1)$) -- ($(dia-1-2.west)+(0,0.1)$) 
  node[pos=.5,above] {$BR$}; 
  \draw[->] ($(dia-1-2.west)-(0,0.1)$) -- ($(dia-1-1.east)-(0,0.1)$)
   node[pos=.5,below] {$BL$} ; 
 \end{tikzpicture}
 are $N_A(X)$-homotopy equivalences between $N_A(X)$-spaces. 
  \end{enumerate}
\end{prop}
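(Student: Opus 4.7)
The plan is to reduce to the classical, non-equivariant fact that an adjunction between small categories induces a pair of mutually inverse homotopy equivalences between their classifying spaces, and then simply to verify that the equivariance is preserved at each step. Recall the standard device: a natural transformation $\alpha \colon F \Rightarrow G$ between functors $\cat C{}{} \to \cat D{}{}$ is the same as a functor $\cat C{}{} \times I \to \cat D{}{}$, where $I = (0 \to 1)$ is the interval category, and this functor descends on classifying spaces to a homotopy between $BF$ and $BG$. Applying this to $\eta$ and $\varepsilon$, and invoking the triangle identities, one recovers that $BR$ and $BL$ are mutually inverse homotopy equivalences.

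For part (1), I would equip $I$ with the trivial $A$-action. Since $F = 1_{\cat C{}{}}$ and $G = RL$ are $A$-functors and $\eta$ is $A$-natural, the associated functor $\cat C{}{} \times I \to \cat C{}{}$ is $A$-equivariant, so the induced homotopy $1 \simeq BR \circ BL$ on $B\cat C{}{}$ is an $A$-homotopy; the same argument for $\varepsilon$ gives an $A$-homotopy $BL \circ BR \simeq 1$ on $B\cat D{}{}$. Hence $BR$ and $BL$ are $A$-homotopy equivalences.

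For part (2), I would first check that the adjunction restricts to centralizer subcategories. Since $L$ and $R$ are $A$-functors, for $c \in \Ob{C_{\cat C{}{}}(X)}$ and $x \in X$ we have $R(c)^x = R(c^x) = R(c)$, and the analogous relation on morphisms, so $R$ restricts to a functor $C_{\cat C{}{}}(X) \to C_{\cat D{}{}}(X)$; similarly for $L$. The $A$-naturality of $\eta$ translates to $\eta_c^x = \eta_{c^x}$ for all $x \in A$; specializing to $c \in \Ob{C_{\cat C{}{}}(X)}$ and $x \in X$ yields $\eta_c^x = \eta_c$, i.e.\ $\eta_c$ is a morphism in $C_{\cat C{}{}}(X)$. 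The same observation applies to $\varepsilon$, and the triangle identities are inherited. Because $N_A(X)$ normalizes $X$, it preserves $X$-stable objects and morphisms, and the restricted data, being $A$-equivariant, is \emph{a fortiori} $N_A(X)$-equivariant, so this defines an $N_A(X)$-adjunction between $C_{\cat C{}{}}(X)$ and $C_{\cat D{}{}}(X)$. Applying the argument of part (1) to this $N_A(X)$-adjunction gives the desired $N_A(X)$-homotopy equivalence.

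None of the steps are genuinely difficult; the only pitfalls are notational, specifically verifying that $A$-naturality of a transformation translates to $X$-invariance of its components at $X$-fixed objects, and that the interval-category trick respects the equivariance when $I$ carries the trivial action. Both are routine bookkeeping rather than substantive obstacles.
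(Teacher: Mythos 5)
The paper states Proposition~\ref{prop:adjunction} without proof, so there is no paper argument to compare against; I can only judge your proof on its own. Your approach is the standard and correct one, and the proof is sound. The key idea, that a natural transformation is a functor out of $\cat C{}{}\times I$ and that equipping $I$ with the trivial $A$-action preserves equivariance, does exactly what is needed, and the verification in part (2) that $A$-naturality of $\eta$ specializes to $X$-stability of $\eta_c$ at $X$-stable $c$ is the one nontrivial observation and you carry it out correctly. You also correctly note that the source and target of $\eta_c$ both lie in the centralizer subcategory because $L$ and $R$ are $A$-functors.

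Two small remarks. First, the two composites are swapped: with $R \colon \cat C{}{} \to \cat D{}{}$ and $L \colon \cat D{}{} \to \cat C{}{}$, the endomap of $B\cat C{}{}$ is $BL \circ BR$, not $BR \circ BL$, so the unit $\eta$ gives an $A$-homotopy $1_{B\cat C{}{}} \simeq BL \circ BR$, and the counit gives $BR \circ BL \simeq 1_{B\cat D{}{}}$. Second, the triangle identities play no role in the homotopy-equivalence conclusion: the existence of the two natural transformations $\eta$ and $\varepsilon$ already yields mutually inverse homotopies on the classifying spaces, and the triangle identities are only what make the quadruple an adjunction. Neither point affects the validity of the argument.
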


\begin{exmp}
  Let $C_2 = \gen{\tau}$ be the group of order two.  The product, $G
  \times G$, of any group $G$ with itself is a $C_2$-group with
  action given by $(g_1,g_2)\tau = (g_2,g_1)$, and $\cat S{G \times
    G}{p}$ is then a $C_2$-poset. The product poset $\cat SG{p}
  \times \cat SG{p}$ is a $C_2$-poset with action $(H_1,H_2)\tau =
  (H_2,H_1)$.  The $C_2$-adjunction
  \begin{center}
     \begin{tikzpicture}[>=stealth']
  \matrix (dia) [matrix of math nodes, column sep=25pt, row sep=20pt]{
  \cat SG{p} \times \cat SG{p}   & \cat S{G \times G}{p}
  & (H_1,H_2)R=H_1 \times H_2,\; HL=(H\pi_1,H\pi_2) \\};
  \draw[->] ($(dia-1-1.east)+(0,0.1)$) -- ($(dia-1-2.west)+(0,0.1)$) 
  node[pos=.5,above] {$R$}; 
  \draw[->] ($(dia-1-2.west)-(0,0.1)$) -- ($(dia-1-1.east)-(0,0.1)$)
   node[pos=.5,below] {$L$} ; 
 \end{tikzpicture}
  \end{center}
  restricts to an adjunction between $C_{\cat SG{p} \times \cat
    SG{p}}(\tau)$ and $C_{\cat S{G \times G}{p}}(\tau)$ and restricts
  further to an adjunction between the posets $C_{\cat SG{p+*} \times
    \cat SG{p+*}}(\tau)$ and $C_{\cat S{G \times G}{p+*}}(\tau)$. But
  $\cat SGp$ is isomorphic to $C_{\cat SG{p} \times \cat SG{p}}(\tau)$
  and $\cat SG{p+*}$ to $C_{\cat S{G \times G}{p+*}}(\tau)$, so we
  conclude that $(\cat SGp,C_{\cat S{G \times G}{p}}(\tau))$ and
  $(\cat SG{p+*},C_{\cat S{G \times G}{p+*}}(\tau))$ are pairs of
  adjoint posets.
\end{exmp}

\subsection{Euler characteristics of $A$-posets}
\label{sec:eucAposet}
In this subsection we make a few easy observations about \we s, co\we
s \cite[Definition 1.10]{leinster08}, and \Euc s of finite $A$-posets.

\begin{lemma}\label{lemma:Ainvmu}
  Let $\cat S{}{}$ be a finite $A$-poset. Then the \Mb\ function $\mu
  \colon \cat S{}{} \times \cat S{}{} \to \Z$, the \we\ $k^\bullet
  \colon \cat S{}{} \to \Z$ and the co\we\ $k_\bullet \colon \cat
  S{}{} \to \Z$ are $A$-invariant.
\end{lemma}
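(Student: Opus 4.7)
The plan is to observe that $A$ acts by poset automorphisms, so every structural object determined by the order relation alone must be $A$-invariant. I would organize the argument around this uniqueness principle.

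First, for the \Mb\ function: since $A$ acts by poset automorphisms, each $g\in A$ satisfies $x\leq y\iff gx\leq gy$, so $g$ carries the interval $[x,y]$ bijectively onto $[gx,gy]$. Using the recursion $\mu(x,x)=1$ and $\mu(x,y)=-\sum_{x\leq z<y}\mu(x,z)$ for $x<y$ (with $\mu(x,y)=0$ otherwise), induction on the length of the interval $[x,y]$ gives $\mu(gx,gy)=\mu(x,y)$ immediately, since the recursion only depends on the combinatorics of the interval.

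For the \we\ and co\we, I would use uniqueness. The weighting is the unique function $k^\bullet\colon\cat S{}{}\to\Z$ satisfying $\sum_{y\geq x}k^y=1$ for every $x$ (existence and uniqueness follow because the incidence matrix of a finite poset is upper-triangular with $1$'s on the diagonal, hence invertible over $\Z$). Given $g\in A$, define $\widetilde k^x=k^{g^{-1}x}$. Then
\begin{equation*}
\sum_{y\geq x}\widetilde k^y=\sum_{y\geq x}k^{g^{-1}y}=\sum_{z\geq g^{-1}x}k^z=1,
\end{equation*}
so $\widetilde k^\bullet$ is also a weighting. By uniqueness, $k^{g^{-1}x}=\widetilde k^x=k^x$, i.e., $k^\bullet$ is $A$-invariant. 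The coweighting argument is symmetric, starting from $\sum_{x\leq y}k_x=1$. Alternatively, I could just invoke the explicit formula $k^x=\sum_{y\geq x}\mu(x,y)$ and compute $k^{gx}=\sum_{y\geq gx}\mu(gx,y)=\sum_{z\geq x}\mu(gx,gz)=\sum_{z\geq x}\mu(x,z)=k^x$ by reindexing $y=gz$.

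There is no real obstacle here — the statement is essentially the observation that $A$-equivariant linear systems with unique solutions over $\Z$ have $A$-invariant solutions, combined with the fact that the zeta/incidence data of the poset is $A$-equivariant. The only mild subtlety is whether one prefers the recursive derivation of \Mb-invariance or the uniqueness-of-weighting derivation; both are short, and the two are compatible via the standard identity expressing $k^\bullet$ and $k_\bullet$ in terms of $\mu$.
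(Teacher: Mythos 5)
Your proof is correct and follows essentially the same route as the paper: both exploit the fact that the zeta/incidence data of the poset is $A$-equivariant, hence the \Mb\ function is $A$-invariant by its defining property, and the weighting and co\we\ inherit invariance from $\mu$. The only cosmetic difference is that the paper verifies that $(s,t)\mapsto\mu(s^a,t^a)$ satisfies $\sum_s\zeta(r,s)\mu(s,t)=\delta_{r,t}$ directly rather than using the recursion on interval length, and it passes to the weighting via the explicit formula $k^s=\sum_t\mu(s,t)$ (your second alternative) rather than your uniqueness-of-solution argument; these are interchangeable.
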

\begin{proof}
  Note thet $\zeta(r,s)=\zeta(r^a,s^a)$ for all $r,s \in \cat S{}{}$ and
  $a \in A$.
  The function $(s,t) \to \mu(s^a,t^a)$ satisfies the
  definining relations
  \begin{equation*}
    \sum_s \zeta(r,s)\mu(s^a,t^a) = 
    \sum_s \zeta(r^a,s^a)\mu(s^a,t^a) =
    \delta_{r^a,t^a} = \delta_{r,t}
  \end{equation*}
  for $(s,t) \to \mu(s,t)$. It is now clear that the 
  \we\ $k^s = \sum_t \mu(s,t)$ and the co\we\ $k_t = \sum_s \mu(s,t)$
  are constant on the orbits for the $A$-action on $\cat S{}{}$.
\end{proof}

For any two $A$-orbits $x,y \in \cat S{}{}/A$ and any two elements
$s,t \in \cat S{}{}$, let 
 $\cat S{}{}(s,y) = | \{ t \in y | s \leq t \}|$ be the number
of successors of $s$ in $y$ and 
 $\cat S{}{}(x,t) = | \{ s \in x | s \leq t \}|$
 the number
of predecessors of $t$ in $x$.


\begin{defn}\label{defn:wecoweSA}
  A \we\ on $\cat S{}{}/A$ is a function $k^\bullet \colon \cat
  S{}{}/A \to \Z$ such that $\sum_{y \in \cat S{}{}/A} \cat
  S{}{}(s,y)k^y = 1$ for all $s \in \cat S{}{}$. A co\we\ on $\cat
  S{}{}/A$ is a function $k_\bullet \colon \cat S{}{}/A \to \Z$ such
  that $\sum_{x \in \cat S{}{}/A} k_x \cat S{}{}(x,t) = 1$ for all $t
  \in \cat S{}{}$.
\end{defn}

\begin{lemma}\label{lemma:eucSA}
  Let $k^\bullet \colon \cat S{}{}/A \to \Z$ be a \we\ and $k_\bullet
  \colon \cat S{}{}/A \to \Z$ a co\we\ on $\cat S{}{}/A$. Then
  \begin{enumerate}
  \item $\cat S{}{} \to \cat S{}{}/A \xrightarrow{k^\bullet}
  \Z$ is the \we\ and $\cat S{}{} \to \cat S{}{}/A
  \xrightarrow{k_\bullet} \Z$ the co\we\ for $\cat S{}{}$
\item The \Euc\ of $\cat S{}{}$ is $\sum_{x \in \cat S{}{}/A} k_x|x| =
  \chi(\cat S{}{}) = \sum_{y \in \cat S{}{}/A} |y|k^y$
  \end{enumerate}
\end{lemma}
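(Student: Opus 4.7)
The plan is to verify both items by direct unraveling of the definitions, using Lemma~\ref{lemma:Ainvmu} only implicitly (to justify that pulling a function back from the orbit set yields an $A$-invariant function). Writing $[t] \in \cat S{}{}/A$ for the $A$-orbit of $t \in \cat S{}{}$, I define $\tilde k^t = k^{[t]}$ and $\tilde k_s = k_{[s]}$, and verify that these pulled-back functions satisfy the Leinster weighting/coweighting relations on $\cat S{}{}$ itself.

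For the weighting assertion in item (\ref{lemma:eucSA}), I need to check that $\sum_{t \in \cat S{}{}} \zeta(s,t) \tilde k^t = 1$ for every $s \in \cat S{}{}$. Partitioning the sum over $t \geq s$ according to the $A$-orbit of $t$ and using that $\tilde k^t$ depends only on $[t]$, I would compute
\begin{equation*}
  \sum_{t \geq s} \tilde k^t \;=\; \sum_{y \in \cat S{}{}/A}\;
     \sum_{\substack{t \in y \\ t \geq s}} k^y
  \;=\; \sum_{y \in \cat S{}{}/A} \cat S{}{}(s,y) \, k^y \;=\; 1,
\end{equation*}
where the last equality is exactly the defining relation in Definition~\ref{defn:wecoweSA}. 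The coweighting case is entirely symmetric: sum $\tilde k_s$ over $s \leq t$, regroup by the orbit $x = [s]$, and recognize $\cat S{}{}(x,t)$ as the inner count.

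For item (2), I would combine the first item with the identity $\chi(\cat S{}{}) = \sum_{t \in \cat S{}{}} k^t$ (the definition of Euler characteristic via a weighting, when one exists), and then again reorganize the sum over $\cat S{}{}$ as a double sum over orbits:
\begin{equation*}
  \chi(\cat S{}{}) \;=\; \sum_{t \in \cat S{}{}} \tilde k^t
  \;=\; \sum_{y \in \cat S{}{}/A}\; \sum_{t \in y} k^y
  \;=\; \sum_{y \in \cat S{}{}/A} |y|\, k^y.
\end{equation*}
The coweighting formula $\chi(\cat S{}{}) = \sum_{x} k_x |x|$ follows in the same way.

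I do not anticipate a genuine obstacle here; the lemma is purely bookkeeping, and the only thing to be slightly careful about is that $\tilde k^t = k^{[t]}$ is well-defined on $\cat S{}{}$ (automatic from the definition of a function on the orbit set) and that the Leinster relation used in item~(2) presupposes the existence of a weighting, which item~(1) has just established.
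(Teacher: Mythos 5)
Your proof is correct and follows essentially the same route as the paper's: you group the terms of $\sum_{t}\zeta(s,t)k^{[t]}$ by $A$-orbits, identify the inner sum as $\cat S{}{}(s,y)$, and invoke the defining relation of Definition~\ref{defn:wecoweSA}, then obtain item~(2) by the same orbit decomposition applied to $\chi(\cat S{}{})=\sum_t k^{[t]}$. The paper omits the coweighting case as symmetric, just as you do.
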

\begin{proof}
  Let $s$ be a fixed element of $\cat S{}{}$ representing the orbit
  $x=sA$ of $\cat S{}{}/A$. Then
  \begin{equation*}
    \sum_{t \in \cat S{}{}} |\cat S{}{}(s,t)| k^{tA} =
     \sum_{y \in \cat S{}{}/A} \sum_{t \in y}  |\cat S{}{}(s,t)|
     k^{tA}  =
    \sum_{y \in \cat S{}{}/A} \cat S{}{}(s,y) k^y =  1
  \end{equation*}
  Thus $t \to k^{tA}$ is the \we\ for $\cat S{}{}$. The \Euc\ is
  $\chi(\cat S{}{}) = \sum_{t \in \cat S{}{}} k^{tA} = \sum_{y \in
    \cat S{}{}/A} |y|k^y$.
\end{proof}

\begin{cor}\label{cor:uweSA}
  $\cat S{}{}/A$ has a unique \we\ and a unique co\we\ induced from
  the unique \we\ and co\we\ on $\cat S{}{}$.
\end{cor}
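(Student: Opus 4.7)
My plan is to derive the corollary from the two preceding lemmas essentially by an orbit-counting argument. The starting point is that any finite poset carries a unique weighting $k^\bullet$ and a unique coweighting $k_\bullet$, because its zeta matrix is upper unitriangular and hence invertible over $\Z$; explicitly, $k^s = \sum_t \mu(s,t)$ and $k_t = \sum_s \mu(s,t)$ as already used in the proof of Lemma~\ref{lemma:Ainvmu}. By that lemma, both $k^\bullet$ and $k_\bullet$ are $A$-invariant, so they descend to functions $\bar k^\bullet, \bar k_\bullet \colon \cat S{}{}/A \to \Z$ defined by $\bar k^{tA} = k^t$ and $\bar k_{tA} = k_t$.

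Next I would verify that $\bar k^\bullet$ satisfies the defining relation of Definition~\ref{defn:wecoweSA}. For any $s \in \cat S{}{}$ I would partition the weighting identity $\sum_{t \geq s} k^t = 1$ on $\cat S{}{}$ according to $A$-orbits:
\begin{equation*}
1 = \sum_{t \geq s} k^t = \sum_{y \in \cat S{}{}/A}\; \sum_{\substack{t \in y \\ t \geq s}} \bar k^y = \sum_{y \in \cat S{}{}/A} \cat S{}{}(s,y)\, \bar k^y.
\end{equation*}
This is the required relation for $\bar k^\bullet$, and the dual computation (partitioning $\sum_{s \leq t} k_s = 1$ by orbits) shows that $\bar k_\bullet$ is a coweighting on $\cat S{}{}/A$.

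For uniqueness I would use Lemma~\ref{lemma:eucSA}(1) in the reverse direction: any weighting $\ell^\bullet \colon \cat S{}{}/A \to \Z$ pulls back along $\cat S{}{} \to \cat S{}{}/A$ to a weighting $t \mapsto \ell^{tA}$ on $\cat S{}{}$, which by uniqueness on $\cat S{}{}$ must equal $k^\bullet$, forcing $\ell^\bullet = \bar k^\bullet$. The coweighting case is formally identical, with successors $\cat S{}{}(s,y)$ replaced by predecessors $\cat S{}{}(x,t)$. I do not foresee any real obstacle here: the corollary is a direct bookkeeping consequence of Lemmas~\ref{lemma:Ainvmu} and~\ref{lemma:eucSA}, the only substantive input being the classical invertibility of the zeta matrix of a finite poset.
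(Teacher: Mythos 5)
Your proof is correct and follows essentially the same route as the paper: both arguments obtain existence by descending the unique $A$-invariant weighting and coweighting on $\cat S{}{}$ (Lemma~\ref{lemma:Ainvmu}) and re-summing over orbits, and both obtain uniqueness by pulling a putative weighting on $\cat S{}{}/A$ back along $\cat S{}{}\to\cat S{}{}/A$ via Lemma~\ref{lemma:eucSA} and invoking uniqueness on $\cat S{}{}$. No gaps; your spelled-out appeal to the unitriangularity of the zeta matrix simply makes explicit what the paper treats as known.
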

\begin{proof}
  Let $k^\bullet \colon \cat S{}{} \to \Z$ be the unique \we\ on $\cat
  S{}{}$. Since $k^\bullet$ is $A$-invariant
  (Lemma~\ref{lemma:Ainvmu}) it induces a function $k^{y} \colon
  \cat S{}{}/A \to \Z$ on the $A$-orbits. Since
  \begin{equation*}
    1 = \sum_{t \in \cat S{}{}} \cat S{}{}(s,t)k^t 
      = \sum_{y \in \cat S{}{}/A} \sum_{t \in y}\cat S{}{}(s,t)k^y
      = \sum_{x \in \cat S{}{}/A} \cat S{}{}(s,y)k^y
  \end{equation*}
  the function $k^y$ is a \we\ for $\cat S{}{}/A$.  Conversely, if
  $k^y \colon \cat S{}{}/A \to \Z$ is a \we\ for $\cat S{}{}/A$ then
  Lemma~\ref{lemma:eucSA} shows that it is induced by the \we\ on the
  poset $\cat S{}{}$.
\end{proof}

\subsection{Equivariant \Euc s of  $A$-categories}
\label{sec:chir}
As suggested by Atiyah and Segal \cite{atiyah&segal89} there is a
hierarchy of equivariant \Euc s $\chi_r(\cat C{}{},A)$, and reduced
equivariant \Euc s $\rchi_r(\cat C{}{},A)$, $ r \geq 0$, given by
\begin{equation*}
  \chi_r(\cat C{}{},A) =
  \frac{1}{|A|} \sum_{x \in C_r(A)} \chi(C_{\cat
    C{}{}}(x)), \qquad
  \rchi_r(\cat C{}{},A) =
  \frac{1}{|A|} \sum_{x \in C_r(A)} \rchi(C_{\cat
    C{}{}}(x))
\end{equation*}
Consult Remark~\ref{rmk:GRT} for the definition of the set $C_r(A)$ of
commuting $r$-tuples in $A$ and for the function $\varphi_r(B)$ used
in Proposition~\ref{prop:altequivchi} below.  If $A$ acts trivially on
$\cat C{}{}$ then $\chi_r(\cat C{}{},A)$ equals the usual \Euc\
$\chi(\cat C{}{})$ multiplied by $|C_r(A)|/|A|$ for all $r \geq 0$.
The relation between the equivariant \Euc\ and the {\em reduced\/}
equivariant \Euc\ is that
\begin{equation*}
  \chi_r(\cat C{}{},A) = \rchi_r(\cat C{}{},A) + \frac{|C_r(A)|}{|A|},
  \qquad r \geq 0
\end{equation*}
We are here implicitly assuming that the centralizer subcategories do
have \Euc s in the sense of Leinster \cite{leinster08}.  We note that
the equivariant \Euc s are invariant under equivariant adjunctions: If
the $A$-categories $\cat C{}{}$ and $\cat D{}{}$ are
$\mathrm{EI}$-categories and if there is an $A$-adjunction between
them, then their equivariant \Euc s coincide. This follows from
Proposition~\ref{prop:adjunction} and the invariance of category \Euc\
under adjunction \cite[Proposition 2.4]{leinster08}.

\begin{prop}\label{prop:altequivchi}
  The $r$th, $r \geq 1$, equivariant \Euc\ of the $A$-category $\cat
  C{}{}$ is
  \begin{multline*}
    \chi_r(\cat C{}{},A) =
     \frac{1}{|A|} \sum_{x \in A}
  \chi_{r-1}(C_{\cat C{}{}}(x),C_A(x))|C_A(x)| 
    =
    \sum_{[x] \in [A]} \chi_{r-1}(C_{\cat C{}{}}(x),C_A(x)) =
    \sum_{[x] \in [C_{r-1}(A)]} \chi_{1}(C_{\cat C{}{}}(x),C_A(x)) =
     \\ = \frac{1}{|A|}
   \sum_{B \in \cat SA{\mathrm{abe}}}  \chi(C_{\cat C{}{}}(B))
    \varphi_r(B) = 
    \sum_{[B] \in [\cat SA{\mathrm{abe}}]}  
    \chi(C_{\cat C{}{}}(B)) \frac{\varphi_r(B)}{|N_A(B)|} 
  \end{multline*}
  Similar formulas hold in the reduced case.
\end{prop}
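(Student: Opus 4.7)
The plan is to derive the five expressions in order, starting from the defining sum
$\chi_r(\cat C{}{}, A) = \frac{1}{|A|} \sum_{x \in C_r(A)} \chi(C_{\cat C{}{}}(x))$.
Throughout, the key structural observation is that a commuting $r$-tuple $(x_1, \ldots, x_r) \in C_r(A)$ is the same data as a choice of $x_1 \in A$ together with a commuting $(r-1)$-tuple in $C_A(x_1)$, and that
$C_{\cat C{}{}}(x_1, \ldots, x_r) = C_{C_{\cat C{}{}}(x_1)}(x_2, \ldots, x_r)$, since $X$-centralization depends only on the underlying set of elements $X$.

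For the first equality, I would substitute this decomposition: the sum over $C_r(A)$ splits as $\sum_{x_1 \in A}\sum_{(x_2, \ldots, x_r) \in C_{r-1}(C_A(x_1))}$, and by the definition of $\chi_{r-1}$ the inner sum is $|C_A(x_1)|\,\chi_{r-1}(C_{\cat C{}{}}(x_1), C_A(x_1))$. The second equality uses that the summand $\chi_{r-1}(C_{\cat C{}{}}(x), C_A(x))\,|C_A(x)|$ is $A$-conjugation invariant in $x$---conjugation by $a \in A$ induces isomorphisms $C_A(x) \xrightarrow{\sim} C_A(x^a)$ and $C_{\cat C{}{}}(x) \xrightarrow{\sim} C_{\cat C{}{}}(x^a)$---combined with the orbit-stabilizer identity $|[x]|\,|C_A(x)|=|A|$ to collapse the $A$-indexed sum into one over $[A]$. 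The third equality comes from the same process with the splitting reversed: write an $r$-tuple as $((x_1,\ldots,x_{r-1}), y)$ with $y \in C_A(x_1,\ldots,x_{r-1})$, then apply the class-equation maneuver one layer out so that the inner sum is $\chi_1(C_{\cat C{}{}}(x), C_A(x))$ and the outer sum ranges over $[C_{r-1}(A)]$.

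For the fourth expression, I would repartition $C_r(A) = \coprod_{B \in \cat SA{\mathrm{abe}}} \{x \in B^r \mid \gen{x_1, \ldots, x_r} = B\}$. On each piece $C_{\cat C{}{}}(x) = C_{\cat C{}{}}(\gen{x}) = C_{\cat C{}{}}(B)$ is constant, and by the convention of Remark~\ref{rmk:GRT} the number of generating $r$-tuples of $B$ equals $\varphi_r(B)$; this yields the stated sum. The fifth expression is then the passage to $A$-conjugacy classes of abelian subgroups via $|[B]| = |A|/|N_A(B)|$, noting that both $\chi(C_{\cat C{}{}}(B))$ and $\varphi_r(B)$ are $A$-conjugation invariant. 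The reduced statements follow either by applying the very same reorganizations to the constant summand in $\chi_r = \rchi_r + |C_r(A)|/|A|$, or by directly running the arguments above with $\rchi$ in place of $\chi$.

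The main obstacle is purely bookkeeping: keeping track of which of the already-established expressions serves as the starting point for each subsequent recasting, and confirming from Remark~\ref{rmk:GRT} the precise convention for $\varphi_r(B)$ so that the reorganization by generated abelian subgroup produces exactly the coefficient claimed.
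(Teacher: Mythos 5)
Your proof is correct and follows essentially the same route as the paper's: split a commuting $r$-tuple into one chosen coordinate together with a commuting $(r-1)$-tuple in the corresponding centralizer, identify the inner sum with $|C_A(x)|\,\chi_{r-1}$, pass to conjugacy classes via orbit-stabilizer, and repartition $C_r(A)$ by generated abelian subgroup to obtain the $\varphi_r$ formula. You fix $x_1$ where the paper fixes $x_r$ (an immaterial symmetry of commuting tuples), and you spell out the third and fifth equalities, which the paper leaves as routine iterations of the same maneuvers.
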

\begin{proof}
This is immediate from the definition. For instance,
\begin{multline*}
  \chi_r(\cat C{}{},A) =
   \frac{1}{|A|}
  \sum_{(x_1,\ldots,x_{r-1},x_r) \in C_{r}(C_A(x))} 
   \chi(C_{\cat C{}{}}(x_1,\ldots,x_{r-1},x_r)) \\=
  \frac{1}{|A|}\sum_{x_r \in A} \;
  \sum_{(x_1,\ldots,x_{r-1}) \in C_{r-1}(C_A(x_r))}
  \chi(C_{C_{\cat C{}{}}(x_r)}(x_1,\ldots,x_{r-1}))  =
  \frac{1}{|A|}\sum_{x \in A}
  \chi_{r-1}(C_{\cat C{}{}}(x),C_A(x))|C_A(x)|
  \\= \sum_{[x] \in [A]} \chi_{r-1}(C_{\cat C{}{}}(x),C_A(x))
\end{multline*}  
%
Declare two $r$-tuples of $C_r(A)$ to be equivalent if they generate
the same abelian subgroup of $A$. The number of $r$-tuples in the
equivalence class of the abelian subgroup $B \leq A$ is
$\varphi_r(B)$.  Therefore we may write
\begin{equation*}
  \chi_r(\cat C{}{},A) 
  = \frac 1{|A|}
    \sum_{B \in \cat SA{\mathrm{abe}}}  \chi(C_{\cat C{}{}}(B))
    \varphi_r(B)
\end{equation*}
where the sum ranges over the abelian subgroups $B$ of $A$. 
\end{proof}

For any subgroup $C$ of $A$
\begin{equation*}
  |C|\chi_r(\cat C{}{},C) = 
  \sum_B \chi(C_{\cat C{}{}}(B))\varphi_r(B) |\cat SA{}(B,C)|, \qquad
  \sum_B |B| \chi_r(\cat C{}{},B) \mu(B,C) = 
  \chi(C_{\cat C{}{}}(C)) \varphi_r(C)
\end{equation*}
by \Mb\ inversion in the poset $\cat SA{}$ of subgroups of $A$.

\begin{defn}\label{defn:classfct}
  The $r$th, $r \geq 1$, equivariant Euler class function $\alpha_r(\cat
  C{}{},A)$ of the $A$-category $\cat C{}{}$ is the rational class
  function on $A$ that takes $x \in A$ to 
  \begin{equation*}
  \alpha_r(\cat C{}{},A)(x) = \chi_{r-1}(C_{\cat C{}{}}(x),C_A(x))  
  \end{equation*}
  The reduced Euler class function $\ralpha_r(\cat C{}{},A)$ is
  defined similarly using reduced \Euc s.
\end{defn}


For any $x \in A$ and any $r \geq 1$ the value at $x$  of the $r$th
equivariant Euler class function is
\begin{equation*}
  \alpha_r(\cat C{}{},A)(x) =  \chi_{r-1}(C_{\cat C{}{}}(x),C_A(x)) 
  =  \sum_{[y] \in [C_A(x)]} \chi_{r-2}(C_{\cat C{}{}}(x,y),C_A(x,y))
\end{equation*}
and for $r=2$, in particular,
\begin{equation}\label{eq:firstalpha2}
   \alpha_2(\cat C{}{},A)(x) =
    \sum_{[y] \in [C_A(x)]} \frac{\chi(C_{\cat C{}{}}(x,y))}{|C_A(x,y)|}
\end{equation}
because $\chi_0(\cat C{}{},A) = \chi(\cat C{}{})/|A|$.

Let $\gen{\varphi,\psi}_A = |A|^{-1} \sum_{x \in A} \varphi(x)
\overline{\psi(x)} = \sum_{[x] \in [A]} |C_A(x)|^{-1} \varphi(x)
\overline{\psi(x)}$ be the character inner product (symmetric bilinear
form) in the complex class function space on $A$
\cite[Definition~2.16]{isaacs}.  The $1$-character $1_A$ is
characterized by the property that its inner product with any class
function $\varphi$ is the average $\gen{\varphi,1_A}_A =
|A|^{-1}\sum_{x \in A} \varphi(x) = \sum_{[x] \in [A]} \varphi(x) |A :
C_A(x)|$ of the values of $\varphi$.
Let $|C_A| \colon x \to |C_A(x)| = |C_A(x^{-1})|$ be the conjugation
character: The permutation character for the conjugation action of $A$
on itself.  The conjugation character is characterized by the property
that the inner product $\gen{\varphi,|C_A|}_A = \sum_{[x] \in [A]}
x^\varphi$ is the sum of the values of $\varphi$ on the conjugacy
classes of $A$. In particular, the inner product of the $r$th
equivariant Euler class function and $|C_A|$ is the $r$th equivariant \Euc :
\begin{equation}\label{eq:charinnerprod}
  \gen{\alpha_r(\cat C{}{},A),|C_A|}_A = 
  \sum_{[x] \in [A]} \alpha_r(\cat C{}{},A)(x) =
  \sum_{[x] \in [A]} \chi_{r-1}(C_{\cat C{}{}}(x),C_A(x)) 
   \stackrel{\tiny \text{Prop~\ref{prop:altequivchi}}}{=} 
  \chi_r(\cat C{}{},A)
\end{equation}
Similarly, the inner product of {\em
  reduced\/} class function $\ralpha_r(\cat C{}{},A)$ and $|C_A|$
coincides with the {\em reduced\/} equivariant \Euc\ $\rchi_r(\cat
C{}{},A)$.

\subsection{Equivariant \Euc s of $A$-posets}
\label{sec:chirS}
We now specialize from a finite $A$-category, $\cat C{}{}$, to a
finite $A$-poset, $\cat S{}{}$.  
The order $\Delta$-set of $\cat S{}{}$  is the
$A$-$\Delta$-set, $\Delta\cat S{}{}$, of all simplices in $\cat
S{}{}$ (\S\ref{sec:orbdelta}).  The equivariant \Euc s of a finite
$A$-poset $\cat S{}{}$ for $r=0,1,2$ are
\begin{equation*}
  \chi_0(\cat S{}{},A) = \chi(\cat S{}{})/|A|, \ 
  \chi_1(\cat S{}{},A) = \chi(\Delta{\cat S{}{}}/A), \ 
  \chi_2(\cat S{}{},A) =  \dim_{\Q} (K^0_A(B\cat S{}{}) \otimes \Q) -
   \dim_{\Q} (K^1_A(B\cat S{}{}) \otimes \Q)
\end{equation*}
according to Proposition~\ref{prop:quocomplex} below for $r=1$ and
\cite{atiyah&segal89} \cite[Corollary 2.2]{thevenaz93Alperin} for
$r=2$.  $K_A^*(B\cat S{}{})$ is the $A$-equivariant complex $K$-theory
of the finite $A$-simplicial complex $B\cat S{}{}$.  The corresponding
{\em reduced\/} \Euc s are $\rchi_0(\cat S{}{},A) = \chi_0(\cat
S{}{},A) - |A|^{-1}$, $\rchi_1(\cat S{}{},A) = \chi_1(\cat
S{}{},A)-1$, and $\rchi_2(\cat S{}{},A) = \chi_2(\cat S{}{},A)-k(A)$
(Remark~\ref{rmk:GRT}).
 
The next proposition shows that the first equivariant \Euc\ of the
$A$-poset $\cat S{}{}$ is the usual \Euc\ of the orbit $\Delta$-set of
the $A$-$\Delta$-set $\Delta\cat S{}{}$. This implies, as epressed in
the following corollary, that the $r$th equivariant \Euc\ for $r \geq
2$ can be expressed by means of the usual \Euc s of quotients
of $\Delta$-sets of  centralizer subposets of $\cat S{}{}$.

\begin{prop}\label{prop:quocomplex}
   For any subgroup $H$ of $A$
  \begin{equation*}
    \chi_1(C_{\cat S{}{}}(H),C_A(H)) =
    \chi(\Delta C_{\cat S{}{}}(H)/C_A(H)) 
  \end{equation*}
A similar formula holds in the reduced case.
\end{prop}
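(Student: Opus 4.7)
The plan is to unfold the definition of $\chi_1$, recognize the inner centralizers as simultaneous fixed subposets, and then invoke the classical averaging formula expressing the Euler characteristic of the quotient of a $\Delta$-set by a finite group action.

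First, by the defining formula for $\chi_1$ (the case $r=1$ of the definition in \S\ref{sec:chir}, which becomes a sum over singletons in $C_1(C_A(H)) = C_A(H)$),
\begin{equation*}
  \chi_1(C_{\cat S{}{}}(H),C_A(H))
  = \frac{1}{|C_A(H)|} \sum_{g \in C_A(H)} \chi\bigl(C_{C_{\cat S{}{}}(H)}(g)\bigr).
\end{equation*}
For $g \in C_A(H)$, an object $s \in \cat S{}{}$ that is fixed by $H$ is fixed by $g$ inside $C_{\cat S{}{}}(H)$ iff it is fixed by the whole subset $\{H,g\} \subseteq A$, so
\begin{equation*}
  C_{C_{\cat S{}{}}(H)}(g) = C_{\cat S{}{}}(H,g).
\end{equation*}
Next I would identify, at the level of order complexes, the fixed subcomplex of $\Delta C_{\cat S{}{}}(H)$ under $g$ with $\Delta C_{\cat S{}{}}(H,g)$. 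A simplex in $\Delta C_{\cat S{}{}}(H)$ is a finite chain $s_0 < s_1 < \cdots < s_k$; since the only order-preserving self-bijection of a finite totally ordered set is the identity, $g$ fixes this simplex (as a simplex of $\Delta$) if and only if $g$ fixes each vertex $s_i$. Hence $(\Delta C_{\cat S{}{}}(H))^g = \Delta C_{\cat S{}{}}(H,g)$.

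The final step is the standard averaging formula for Euler characteristics of orbit $\Delta$-sets: for any finite group $G$ acting on a finite $\Delta$-set $K$ one has $\chi(K/G) = |G|^{-1} \sum_{g \in G} \chi(K^g)$ (provable either by the Lefschetz fixed point theorem applied to the idempotent $|G|^{-1}\sum_g g$ acting on rational chains, or by a direct orbit count weighted by stabilizer sizes). Applying this with $K = \Delta C_{\cat S{}{}}(H)$ and $G = C_A(H)$ and substituting the fixed-subcomplex identification of the previous paragraph gives
\begin{equation*}
  \chi\bigl(\Delta C_{\cat S{}{}}(H)/C_A(H)\bigr)
   = \frac{1}{|C_A(H)|} \sum_{g \in C_A(H)} \chi\bigl(\Delta C_{\cat S{}{}}(H,g)\bigr),
\end{equation*}
which matches the expression for $\chi_1(C_{\cat S{}{}}(H),C_A(H))$ obtained above. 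The reduced version follows by subtracting $1$ from both sides, using $|C_1(C_A(H))|/|C_A(H)| = 1$ on the equivariant side and $\rchi = \chi - 1$ on the topological side.

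The main obstacle is not any single step — each is folklore — but the verification that the rigidity of finite chains really does force every $g$-fixed simplex to be pointwise fixed, so that one can freely interchange ``order complex of the $g$-fixed subposet'' with ``$g$-fixed subcomplex of the order complex.'' Once that is in hand, the proof is a one-line application of Burnside-type averaging.
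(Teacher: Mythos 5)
Your proof is correct and follows essentially the same route as the paper's: expand $\chi_1$ as an average over $C_A(H)$, identify the inner centralizer posets with simultaneous fixed subposets $C_{\cat S{}{}}(\gen{H,g})$, and use Cauchy--Frobenius (Burnside) orbit counting on the $\Delta$-set of simplices. Your explicit remark that the only order-preserving self-bijection of a finite chain is the identity, forcing $g$-fixed simplices to be pointwise fixed so that $(\Delta C_{\cat S{}{}}(H))^g = \Delta C_{\cat S{}{}}(H,g)$, is precisely the step the paper uses silently when it equates $|C_{\Delta C_{\cat S{}{}}(H)_d}(x)|$ with $|\Delta C_{\cat S{}{}}(\gen{H,x})_d|$; spelling it out is a small but genuine improvement in rigor, not a different approach.
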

\begin{proof}
  For any simplicial complex $B$ let $B_d$ denote set the of
  $d$-dimensional simplices in $B$. Then
  \begin{multline*}
    \chi(\Delta C_{\cat S{}{}}(H)/C_A(H)) =
    \sum_{d \geq 0} (-1)^d \left|  \Delta C_{\cat S{}{}}(H)_d/C_A(H) \right| =
    \sum_{d \geq 0} (-1)^d \frac{1}{|C_A(H)|} \sum_{x \in C_A(H)}
    \left| C_{\Delta C_{\cat S{}{}}(H)_d}(x) \right| \\ =
    \frac{1}{|C_A(H)|} \sum_{x \in C_A(H)} \sum_{d \geq 0}  (-1)^d  
    |\Delta C_{\cat S{}{}}(\gen{H,x})_d| = 
    \frac{1}{|C_A(H)|} \sum_{x \in C_A(H)} 
    \chi ( C_{\cat S{}{}}(\gen{H,x})) =
    \chi_1(C_{\cat S{}{}}(H),C_A(H))
  \end{multline*}
  where the Cauchy--Frobenius Lemma~\ref{lemma:burnside} justifies the
  second equality above. 
\end{proof}

\begin{cor}\label{cor:quocomplex}
  The equivariant \Euc s of the finite $A$-poset $\cat S{}{}$ are
  $\chi_1(\cat S{}{},A) = \chi(\Delta{\cat S{}{}}/A)$ and
  \begin{equation*}
    \chi_r(\cat S{}{},A) =
  \sum_{[x] \in [C_{r-1}(A)]}\chi(\Delta{C_{\cat S{}{}}(x)}/C_A(x)) =
  \frac{1}{|A|}
  \sum_{B\in \cat SA{\mathrm{abe}}} \chi(\Delta{C_{\cat S{}{}}(B)}/C_A(B)) 
  |C_A(B)| \varphi_{r-1}(B)
  \end{equation*}
  for $r \geq 2$. The first sum is taken over the set $[C_{r-1}(A)]$
  of conjugacy classes of commuting $(r-1)$-tuples in $A$.
\end{cor}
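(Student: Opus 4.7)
The plan is to derive the corollary from Proposition~\ref{prop:quocomplex} together with the second and fifth expressions in Proposition~\ref{prop:altequivchi}, using the standard orbit-counting identity that reshuffles a sum over conjugacy classes of tuples into a sum over the abelian subgroups they generate.

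First I would dispose of the case $r=1$. Take $H=1$ in Proposition~\ref{prop:quocomplex}. Since $C_{\cat S{}{}}(1) = \cat S{}{}$ and $C_A(1) = A$, the proposition immediately gives $\chi_1(\cat S{}{},A) = \chi(\Delta{\cat S{}{}}/A)$.

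For $r \geq 2$, I would start from the identity
\begin{equation*}
  \chi_r(\cat S{}{},A) =
    \sum_{[x] \in [C_{r-1}(A)]} \chi_{1}(C_{\cat S{}{}}(x),C_A(x))
\end{equation*}
supplied by Proposition~\ref{prop:altequivchi}. Applied term by term with $H = \gen{x_1,\ldots,x_{r-1}}$, Proposition~\ref{prop:quocomplex} (noting $C_{\cat S{}{}}(x) = C_{\cat S{}{}}(\gen x)$ and $C_A(x) = C_A(\gen x)$) replaces each $\chi_1(C_{\cat S{}{}}(x),C_A(x))$ by $\chi(\Delta C_{\cat S{}{}}(x)/C_A(x))$, yielding the first equality of the corollary.

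For the second equality I would unpack the sum over conjugacy classes into a sum over all commuting tuples: since the stabilizer of $x = (x_1,\ldots,x_{r-1})$ under the simultaneous $A$-conjugation is exactly $C_A(x)$, the orbit of $x$ has size $|A|/|C_A(x)|$, so
\begin{equation*}
    \sum_{[x] \in [C_{r-1}(A)]} \chi(\Delta C_{\cat S{}{}}(x)/C_A(x)) =
    \frac{1}{|A|}\sum_{x \in C_{r-1}(A)} \chi(\Delta C_{\cat S{}{}}(x)/C_A(x))\,|C_A(x)|.
\end{equation*}
Finally I would group tuples by the abelian subgroup $B = \gen{x_1,\ldots,x_{r-1}}$ they generate. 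Both $C_{\cat S{}{}}(x)$ and $C_A(x)$ depend only on $B$, and by the very definition of $\varphi_{r-1}$ the number of commuting $(r-1)$-tuples generating a given abelian $B \leq A$ equals $\varphi_{r-1}(B)$. Collecting terms gives the displayed formula. The main bookkeeping issue is this last step, but it is exactly the same reshuffling used in the proof of Proposition~\ref{prop:altequivchi} and so presents no real obstacle.
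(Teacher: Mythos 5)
Your proposal is correct and follows essentially the same route as the paper: both the $r=1$ case and the $r\geq 2$ case are obtained by combining Proposition~\ref{prop:quocomplex} with the reformulations of the equivariant \Euc\ in Proposition~\ref{prop:altequivchi}. The paper proves the $r=2$ case explicitly from the first expression in Proposition~\ref{prop:altequivchi} and then invokes induction to reach the sum over $[C_{r-1}(A)]$, whereas you invoke the already-available third expression $\chi_r(\cat S{}{},A) = \sum_{[x]\in[C_{r-1}(A)]}\chi_1(C_{\cat S{}{}}(x),C_A(x))$ directly, which short-circuits the induction; you also spell out the orbit-stabilizer and generated-subgroup reshuffling that yields the abelian-subgroup form, which the paper leaves implicit. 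The substance is the same.
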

\begin{proof}
  Proposition~\ref{prop:quocomplex} with $H$ trivial shows that
  $\chi_1(\cat S{}{},A) = \chi(\Delta \cat
  S{}{}/A)$. Using Proposition~\ref{prop:altequivchi} we get 
\begin{equation*}
  \chi_2(\cat S{}{},A) =
  \frac{1}{|A|}\sum_{x \in A}\chi_1(C_{\cat S{}{}}(x),C_A(x))|C_A(x)|=
  \frac{1}{|A|}\sum_{x \in A}\chi(\Delta C_{\cat S{}{}}(x)/C_A(x))|C_A(x)|  
\end{equation*}
so that, by induction,
\begin{equation*}
  \chi_r(\cat S{}{},A) = \frac{1}{|A|}
  \sum_{x \in C_{r-1}(A)}\chi(\Delta C_{\cat S{}{}}(x)/C_A(x))|C_A(x)| =
  \sum_{[x] \in [C_{r-1}(A)]}\chi(\Delta C_{\cat S{}{}(x)}/C_A(x))
\end{equation*}
where the last sum is taken over the set $[C_{r-1}(A)] = C_{r-1}(A)/A$
of conjugacy classes of commuting $r$-tuples in $A$ and $r \geq
2$. 
\end{proof}

\begin{cor}\label{cor:xXCG}
  The equivariant \Euc s $\chi_r(\cat S{}{},A)$ are integers for $r
  \geq 1$ and the equivariant Euler class functions $\alpha_r(\cat
  S{}{},A)$ take integer values for $r \geq 2$. The same holds in the
  reduced case.
\end{cor}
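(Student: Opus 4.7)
My plan is to read the integrality of $\chi_r(\cat S{}{},A)$ directly off Corollary~\ref{cor:quocomplex}, which does the substantive work. For $r=1$ that corollary identifies $\chi_1(\cat S{}{},A)$ with $\chi(\Delta\cat S{}{}/A)$, the ordinary \Euc\ of a finite orbit $\Delta$-set, which is manifestly an integer. For $r \geq 2$ the same corollary expresses
\begin{equation*}
  \chi_r(\cat S{}{},A) = \sum_{[x] \in [C_{r-1}(A)]}
  \chi(\Delta C_{\cat S{}{}}(x)/C_A(x))
\end{equation*}
as a finite sum of \Euc s of quotient $\Delta$-sets, hence again an integer.

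The integrality of the Euler class function $\alpha_r(\cat S{}{},A)$ for $r \geq 2$ is then a formal consequence. By Definition~\ref{defn:classfct} its value at $x \in A$ is $\chi_{r-1}(C_{\cat S{}{}}(x),C_A(x))$; here $C_{\cat S{}{}}(x)$ is a finite $C_A(x)$-poset and $r-1 \geq 1$, so the first step, applied now to this smaller pair, yields an integer.

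For the reduced versions I need only verify that the shift $|C_r(A)|/|A|$ relating $\chi_r$ and $\rchi_r$ is itself a nonnegative integer once $r \geq 1$. The cleanest route is Burnside applied to the conjugation action of $A$ on $C_{r-1}(A)$: the $g$-fixed set is $C_{r-1}(C_A(g))$, so summing gives $|C_r(A)| = \sum_{g \in A} |C_{r-1}(C_A(g))| = |A|\cdot|C_{r-1}(A)/A|$, and $|C_r(A)|/|A|$ is therefore the integral count of conjugacy classes of commuting $(r-1)$-tuples in $A$. Combined with the unreduced cases this gives integrality of $\rchi_r(\cat S{}{},A)$ for $r \geq 1$ and of $\ralpha_r(\cat S{}{},A)$ for $r \geq 2$.

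There is no real obstacle here: the homotopical content has already been absorbed into Corollary~\ref{cor:quocomplex}, which rewrites each equivariant \Euc\ as an ordinary \Euc\ of a quotient $\Delta$-set. The only delicate (but elementary) point is the small Burnside count showing that the reduction term $|C_r(A)|/|A|$ is integral for $r \geq 1$.
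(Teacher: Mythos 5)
Your proof is correct and follows essentially the same route as the paper's, which likewise grounds everything in Proposition~\ref{prop:quocomplex}, reads off the $r=1$ case as the \Euc\ of a finite quotient $\Delta$-set, reduces $r\geq 2$ to $r=1$ via the decomposition over conjugacy classes of commuting tuples, and then evaluates $\alpha_r$ at $x$ by applying the first part to the pair $(C_{\cat S{}{}}(x),C_A(x))$. Your explicit Burnside computation identifying $|C_r(A)|/|A|$ with $|C_{r-1}(A)/A|\in\Z$ is a worthwhile supplement to the paper's terse ``the same holds in the reduced case,'' though that integrality is also implicit in Remark~\ref{rmk:GRT} and in the indexing set $[C_{r-1}(A)]$ appearing in Corollary~\ref{cor:quocomplex}.
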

\begin{proof}
  The first \Euc\ $\chi_1(\cat S{}{},A) = \chi(\Delta \cat S{}{}/A)$
  is an integer by Proposition~\ref{prop:quocomplex} since it is the
  \Euc\ of a finite $\Delta$-set. By induction, using one of the
  formulas of Proposition~\ref{prop:altequivchi}, we get that
  $\chi_r(\cat S{}{},A) \in \Z$ for all $r \geq 1$. By applying this
  result to $(C_{\cat S{}{}}(x),C_A(x))$ we see that the equivariant
  Euler class function $\alpha_r(\cat S{}{},A)$ takes integer values
  on each $x \in A$ when $r \geq 2$.
\end{proof}

\begin{lemma} \cite[Lemma~7.24.5]{stanley99}\label{lemma:burnside} Let
  $S \times A \to A$ be an action of the group $A$ on the set $S$.
  Then
  \begin{equation*}
    \sum_{a \in A} |C_S(a)| = |S/A| |A| = \sum_{s \in S} |C_A(s)|
  \end{equation*}
  and the number of orbits is $|S/A| = \sum_{s \in S}
  |C_A(s)|/|A|$. 
\end{lemma}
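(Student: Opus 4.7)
The plan is to prove the standard Cauchy--Frobenius identity by double-counting the incidence set
\begin{equation*}
  F = \{(a,s) \in A \times S \mid as = s\}.
\end{equation*}
First I would fix $a \in A$ and sum over $s$: the set of $s$ with $as = s$ is by definition the fixed-point set $C_S(a)$, so counting $F$ by the first coordinate gives $|F| = \sum_{a \in A} |C_S(a)|$. Next I would fix $s \in S$ and sum over $a$: the set of $a$ with $as = s$ is the stabilizer $C_A(s)$, so counting $F$ by the second coordinate gives $|F| = \sum_{s \in S} |C_A(s)|$. This establishes the outer equality $\sum_{a \in A} |C_S(a)| = \sum_{s \in S} |C_A(s)|$.

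To identify this common value with $|S/A|\cdot |A|$, I would partition $S$ into $A$-orbits. For each orbit $O \subseteq S$ and each $s \in O$ the orbit--stabilizer relation gives $|O|\cdot |C_A(s)| = |A|$, and since $C_A(s)$ has the same order for all $s$ in a given orbit, we obtain
\begin{equation*}
  \sum_{s \in O} |C_A(s)| = |O| \cdot \frac{|A|}{|O|} = |A|.
\end{equation*}
Summing over the $|S/A|$ orbits yields $\sum_{s \in S} |C_A(s)| = |S/A|\cdot |A|$, which is the middle equality. Dividing by $|A|$ then gives the final formula for the number of orbits.

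The argument is entirely routine and there is no real obstacle; the only thing to be careful about is making sure the notation $C_S(a)$ (fixed points on $S$) and $C_A(s)$ (stabilizer in $A$) is used consistently with the conventions of the paper. Since both quantities have already appeared in the earlier computations (e.g.\ in the proof of Proposition~\ref{prop:quocomplex}), no new notation is required.
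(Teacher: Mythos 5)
Your proof is correct and is the standard double-counting argument (count the incidence set $F = \{(a,s) : s^a = s\}$ both ways, then apply orbit--stabilizer on each orbit). The paper does not give its own proof but simply cites Stanley \cite[Lemma~7.24.5]{stanley99}, so there is nothing to compare against beyond noting that this is exactly the classical argument; the only cosmetic remark is that the paper generally writes actions on the right (e.g.\ $s^a$), while you used the left-action notation $as$, and the statement's displayed arrow $S\times A\to A$ is evidently a typo for $S\times A\to S$.
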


We now specialize even further. When $G$ is a finite group, recall
that the Brown $G$-poset $\cat SG{p+*}$ is the $G$-poset of
nonidentity $p$-subgroups of $G$ with the conjugation action.

\begin{rmk}[Equivariant \Euc s of the Brown poset with conjugation
  action] \label{exmp:3conjs} Here are three statements about the
  equivariant \Euc s for $r=0,1,2$ of the poset $\cat SG{p+*}$ with
  conjugation action of $A=G$:
  \begin{enumerate}
  \item $\forall G \colon -\rchi_0(\cat SG{p+*},G) = 0 \iff O_p(G) \neq
    1$ \label{list:brown1} 
  \item $\forall G \colon -\rchi_1(\cat SG{p+*},G) =
    \begin{cases}
      0 & p \mid |G| \\ 1 & p \nmid |G|
    \end{cases}$ \label{list:brown2}
  \item $\forall G \colon -\rchi_2(\cat SG{p+*},G) = z_p(G)$
    \label{list:brown3} 
  \end{enumerate}
  Statement \eqref{list:brown1} is the strong form of Quillen's
  conjecture about the vanishing of the reduced \Euc\ of the Brown
  poset \cite[Conjecture 2.9]{quillen78} \cite[p.\@
  2667]{jmm_mwj:2010}, \eqref{list:brown2} was proved by Webb
  \cite[Proposition 8.2.(i)]{webb87} (and sharpened by Symonds
  \cite{symonds98} and later S{\l}omi{\'n}ska
  \cite{swominska01}), and \eqref{list:brown3} is equivalent
  (Section~\ref{sec:KRC}) to (the non-blockwise form of) Alperin's
  weight conjecture \cite[Theorem 3.1]{thevenaz93Alperin}.

  The significance of the equivariant \Euc s $\chi_r(\cat SG{p+*},G)$
  for $r \geq 3$ remains unclear.  In the case of the alternating and
  symmetric groups the $3$rd reduced \Euc s  $-\rchi_3(\cat SG{p+*},G)$ at
  $p=2$ are
  \begin{center} 
    \begin{tabular}[t]{>{$}c<{$} | *{7}{>{$}r<{$}} }
     n & 4 & 5 & 6 &  7 & 8 & 9 & 10 \\ \hline \noalign{\smallskip}
     -\rchi_3(\cat S{A_n}{2+*},A_n)
     & 0 & 8 & 24 & -2 & 32 & 20 & 42 \\ \noalign{\smallskip}
    -\rchi_3(\cat S{\Sigma_n}{2+*},\Sigma_n) 
    & 0 & 2 & 12 & -2 &  10 & 11 & 16  
   \end{tabular}
  \end{center}
  For $G=\GL 3{\F_2}$, the \Euc s   $-\rchi_3(\cat SG{p+*},G)$ are
  $12,24,7$ at $p=2,3,7$ and for $G=M_{11}$ they are $29,19,62,35$ at
  $p=2,3,5,11$. 

  Similarly, the significance of the more general equivariant \Euc s
  $\chi_r(\cat SG{p+*},A)$, $r \geq 1$, for an $A$-group $G$ remains
  unclear. (See Remark~\ref{rmk:rchiSGpK}.)
\end{rmk}

Using Proposition~\ref{prop:altequivchi}, Webb's computation of
$\chi_1(\cat SG{p+*},G)$
(Remark~\ref{exmp:3conjs}.\eqref{list:brown2})  may be
reformulated as either of three equivalent identities 
\begin{equation}\label{eq:webb}
  \sum_{x \in G} \chi(C_{\cat SG{p+*}}(x)) = |G|, \qquad
   \sum_{[x] \in [G]}
  \chi(C_{\cat S G{p+*}}(x)) |G : C_G(x)| = |G|, \qquad
   \sum_{[x] \in [G]} \rchi(C_{\cat S G{p+*}}(x)) |G : C_G(x)| = 0
\end{equation}
valid whenever $p \mid |G|$ (and reminiscent of the class equation).

\begin{exmp}
  The following two tables list the centralizer indices $|G : C_G(x)|$
  and the centralizer \Euc s $\chi(C_{\cat SG{p+*}}(x))$ as $x$ runs
  through the conjugacy classes of $G$ and $p$ runs through the prime
  divisors of $|G|$ for $G= \GL 3{\F_2}$ of order $168$ and the Mathieu group
  $G=M_{11}$ of order $7920$, respectively.
  \begin{center}  
    \begin{tabular}[t]{>{$}c<{$} | *{6}{>{$}r<{$}} | >{$}c<{$}  }
|x| & 1  & 2  & 3  & 4  & 7  & 7 & \cdot \\ \hline
|G:C_G(x)| & 1  &21  &56  &42  &24  &24 & {}\\ \hline
\rchi(C_{\cat S{G}{2+*}}(x)) & -8  & 0  & 1  & 0  &-1  &-1 & 0 \\
\rchi(C_{\cat S{G}{3+*}}(x)) &27  & 3  & 0  &-1  &-1  &-1 & 0\\ 
\rchi(C_{\cat S{G}{7+*}}(x)) &  7  &-1  & 1  &-1  & 0  & 0 & 0
    \end{tabular}
  \end{center}
  \begin{center}
    \begin{tabular}[t]{>{$}c<{$} | *{10}{>{$}r<{$}} | >{$}c<{$}  }
|x| & 1  &   2  &   3  &   4  &   5  &   6  &   8  &   8  &  11  &  11
& \cdot
\\ \hline
|G : C_G(x)| &
  1  & 165  & 440  & 990  &1584  &1320  & 990  & 990  & 720  & 720 & {}\\
  \hline
\rchi(C_{\cat SG{2+*}}(x)) &
-496  &   0  &   8  &   0  &  -1  &   0  &   0  &   0  &  -1  &  -1 &0\\
\rchi(C_{\cat SG{3+*}}(x)) &
  54  &   6  &   0  &   2  &  -1  &   0  &   0  &   0  &  -1  &  -1 &0\\
\rchi(C_{\cat SG{5+*}}(x)) &
 395  &  11  &  -1  &   3  &   0  &  -1  &  -1  &  -1  &  -1  &  -1 &0\\
\rchi(C_{\cat SG{11+*}}(x)) &
 143  &  -1  &  -1  &  -1  &   3  &  -1  &  -1  &  -1  &   0  &   0 &0
    \end{tabular}
  \end{center}
  Note that all rows satisfy Webb's relation \eqref{eq:webb}. It is no
  coincidence that $\rchi(C_{\cat SG{p+*}}(x))=0$ when $p$ divides the
  order of $x$ (Lemma~\ref{lemma:OpH}).
\end{exmp}

\begin{rmk}[Commuting $r$-tuples]\label{rmk:GRT}
  For $r=0,1,2,\ldots$ let
\begin{equation*}
  C_r(A) = \{ (x_1,x_2,\ldots,x_r) \mid \forall i,j \colon [x_i,x_j]=1
  \}
\end{equation*}
denote the set of commuting $r$-tuples in $A$ (with the understanding that
$C_0(A) = \{e \}$ is the set consisting of the unit element
of $A$).  

Then $|C_0(A)|=1$, $|C_1(A)|=|A|$, $|C_2(A)|=|A|k(A)$, and
\begin{equation*}
  |C_r(A)| = |A| \sum_{x \in C_{r-2}(A)} k(C_A(x)), \qquad r \geq 2
\end{equation*}
This follows from the recursive relation 
\begin{equation*}
  |C_r(A)| = \sum_{x \in A} |C_{r-1}(C_A(x))| =
  \sum_{[x] \in A} |A : C_A(x)||C_{r-1}(C_A(x))|, \qquad r \geq 1,
\end{equation*}
obtained by noting that there are $|C_{r-1}(C_A(x))|$ commuting
$r$-tuples with first coordinate $x$ for any $x \in A$.

Let also
\begin{equation*}
  \varphi_r(A) = | \{ (x_1,\ldots,x_r) \in C_r(A) \mid
  \gen{x_1,\ldots,x_r} = A \}|
\end{equation*}
be the number of generating commuting $r$-tuples. Of course,
$\varphi_r(A)=0$ for all $r$ unless $A$ is abelian. When $A$ is
abelian
\begin{equation*}
  \varphi_r(A) = \prod_p \varphi_r(O_p(A))
\end{equation*}
since any abelian group $A= \prod_p O_p(A)$ is the product of its \syl
ps. Thus $\varphi_r$ is completely determined by its values on cyclic
$p$-groups.

Similar to one of the expressions of
Proposition~\ref{prop:altequivchi} we get 
\begin{equation*}
    |C_r(A)| = \sum_{B \in \cat SA{\mathrm{abe}}} \varphi_r(B)
\end{equation*}
by using the partition of $C_r(A)$ into equivalence classes.

\end{rmk}

\subsection{Orbit posets and orbit $\Delta$-sets}
\label{sec:orbdelta}
This subsection contains remarks about orbit posets of $A$-posets.

\begin{defn}\label{defn:DeltaS}
Let $\cat S{}{}$  be an  $A$-poset.  
\begin{itemize}

\item A {\em simplex\/} in $\cat S{}{}$ is a totally ordered subset of
  $\cat S{}{}$

\item $\B(\cat S{}{})$ is the set of simplices in $\cat S{}{}$ viewed as an
$A$-simplicial complex

\item $\Delta(\cat S{}{})$ is the set of simplices in $\cat S{}{}$
  viewed as an $A$-$\Delta$-set\footnote{A $\Delta$-set is a functor
    $\Delta_< \to \mathbf{SET}$ where $\Delta_<$ is the category of
    standard simplices $n_+=\{0,\ldots,n\}$, $n=0,1,2,\ldots$, with
    strictly increasing maps.} \cite{hatcher}

\item $\sd {\cat S{}{}}$ is the set of simplices in $\cat S{}{}$ viewed
as an $A$-poset

\item $\cat S{}{}$ is graded if $\cat S{}{}$ admits an $A$-invariant
  {\em rank function\/} $\rho \colon \cat S{}{} \to \{0,1,2,\ldots\}$
  \cite[\S 3.1]{stanley97}

\item $\cat S{}{}$ is $A$-regular if $\cat
  S{}{}$ if $s \leq t, \; s^a \leq t \implies s=s^a$ holds for all
  $s,t \in \cat S{}{}$, $a \in A$  \cite[p $116$]{bredon72} 

\end{itemize}
\end{defn}

Any $A$-poset $\cat S{}{}$ admits an orbit poset $\cat S{}{}/A$ and
the $A$-$\Delta$-set $\Delta(\cat S{}{})$ admits an orbit $\Delta$-set
$\Delta(\cat S{}{})/A$.


\begin{prop}\label{prop:orbitdelta}
  Let $\cat S{}{}$ be any $A$-poset.
  \begin{enumerate}
  \item $\sd{\cat S{}{}}$ is an $A$-poset and $\chi(\Delta(\sd{\cat
      S{}{}})/A)=\chi(\Delta(\cat S{}{})/A)$ \label{prop:orbitdelta1}
  \item If $\cat S{}{}$ is $A$-regular then $\sd{\cat S{}{}}/A =
    \sd{\cat S{}{}/A}$ and $\Delta(\cat S{}{})/A =
    \Delta(\cat S{}{}/A)$ \label{prop:orbitdelta2}
  \item If $\cat S{}{}$ is graded then $\sd{\cat S{}{}}$ is
    $A$-regular, and $\chi_1(\cat S{}{},A) = \chi(\sd{\cat S{}{}}/A)$
    \label{prop:orbitdelta3}
  \item The first subsdivision $\sd{\cat S{}{}}$ is graded, the
    second subdivision $\mathrm{sd}^2(\cat S{}{})$ is $A$-regular, and
    $\chi_1(\cat S{}{},A) = \chi(\mathrm{sd}^2(\cat S{}{})/A)$
    \label{prop:orbitdelta4} 
  \end{enumerate}
\end{prop}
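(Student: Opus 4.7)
The plan is to establish the four items in order, each feeding into the next.  For (1), I would use that passage to the subdivision preserves Euler characteristic of a poset, i.e.\ $\chi(\sd{P})=\chi(P)$ for any finite poset $P$.  By Proposition~\ref{prop:quocomplex} together with the Cauchy--Frobenius Lemma~\ref{lemma:burnside},
\begin{equation*}
  \chi(\Delta(\cat S{}{})/A)
   = \chi_1(\cat S{}{},A)
   = \frac{1}{|A|}\sum_{a\in A}\chi(C_{\cat S{}{}}(a)),
\end{equation*}
and the identical formula holds with $\cat S{}{}$ replaced by $\sd{\cat S{}{}}$.  Since centralisation commutes with subdivision, $C_{\sd{\cat S{}{}}}(a)=\sd{C_{\cat S{}{}}(a)}$, so each summand is unchanged and the two averages agree.

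For (2), I would construct the canonical map of $\Delta$-sets $\Delta(\cat S{}{})/A\to\Delta(\cat S{}{}/A)$ sending the orbit of a chain $s_0<\cdots<s_d$ to the chain of orbits $[s_0]<\cdots<[s_d]$, and prove it is bijective in each dimension.  Well-definedness uses $A$-regularity: if $[s_i]=[s_{i+1}]$ with $s_{i+1}=s_i^a$, then $s_i\le s_{i+1}$ and $s_i^a\le s_{i+1}$ force $s_i=s_i^a=s_{i+1}$, contradicting $s_i<s_{i+1}$.  Surjectivity is by lifting a chain of orbits representative-by-representative in successor positions.  I expect injectivity to be the main obstacle.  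I would argue by induction on chain length: assuming there is $a\in A$ with $s_i^a=s_i'$ for all $i<d$, pick $b$ with $(s_d^a)^b=s_d'$; then for each $i<d$ the inequalities $s_i'\le s_d'$ and $(s_i')^b\le(s_d^a)^b=s_d'$ (obtained by applying $b$ to $s_i'=s_i^a\le s_d^a$) together with $A$-regularity give $(s_i')^b=s_i'$, so $ab$ matches the whole chain.  The same inductive scheme, applied with chains of chains in place of chains, yields the poset identity $\sd{\cat S{}{}}/A=\sd{\cat S{}{}/A}$.

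For (3), the $A$-invariance of a rank function $\rho$ on $\cat S{}{}$ forces $A$-regularity of $\sd{\cat S{}{}}$: every chain in a graded poset is strictly rank-increasing, so any subchain of a fixed chain $D$ is determined by its set of ranks; hence $C\subseteq D$ and $C^a\subseteq D$ imply $\rho(C)=\rho(C^a)$ and therefore $C=C^a$.  Parts (1) and (2) applied to $\sd{\cat S{}{}}$, together with Corollary~\ref{cor:quocomplex} at $H$ trivial, then chain
\begin{equation*}
  \chi_1(\cat S{}{},A)
   =\chi(\Delta(\cat S{}{})/A)
   =\chi(\Delta(\sd{\cat S{}{}})/A)
   =\chi(\Delta(\sd{\cat S{}{}}/A))
   =\chi(\sd{\cat S{}{}}/A).
\end{equation*}
Finally (4) follows formally from (3): the first subdivision $\sd{\cat S{}{}}$ is always graded with rank given by chain length $C\mapsto|C|-1$, so applying (3) to $\sd{\cat S{}{}}$ shows that $\mathrm{sd}^2(\cat S{}{})$ is $A$-regular and that $\chi_1(\sd{\cat S{}{}},A)=\chi(\mathrm{sd}^2(\cat S{}{})/A)$.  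Combined with $\chi_1(\cat S{}{},A)=\chi_1(\sd{\cat S{}{}},A)$, a consequence of (1) and Corollary~\ref{cor:quocomplex}, the argument is complete.
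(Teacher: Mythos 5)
Your proposal is correct, and the overall architecture --- establish invariance of $\chi(\Delta(-)/A)$ under subdivision, identify the orbit $\Delta$-set of an $A$-regular poset with the $\Delta$-set of its orbit poset, deduce $A$-regularity of $\sd{\cat S{}{}}$ from an $A$-invariant rank function, then bootstrap to the second subdivision --- is the same as the paper's.  The one genuine point of divergence is part (1).  The paper invokes the standard $A$-equivariant homeomorphism $|\Delta(\sd{\cat S{}{}})|\to|\Delta(\cat S{}{})|$ and passes to orbit spaces, relying implicitly on the fact that geometric realisation of $\Delta$-sets commutes with quotients.  You instead stay entirely combinatorial: you rewrite $\chi(\Delta(\cat S{}{})/A)$ as the average $|A|^{-1}\sum_a\chi(C_{\cat S{}{}}(a))$ via Proposition~\ref{prop:quocomplex}, observe that $C_{\sd{\cat S{}{}}}(a)=\sd{C_{\cat S{}{}}(a)}$ (true because a poset automorphism fixing a chain setwise must fix it pointwise), and then use the elementary fact that subdivision preserves the Euler characteristic of a finite poset.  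This avoids realisations entirely and has the minor advantage that it immediately yields $\chi_1(\cat S{}{},A)=\chi_1(\sd{\cat S{}{}},A)$, which you then reuse in part (4).  Your treatment of part (2) is also somewhat more detailed than the paper's one-line assertion: you spell out well-definedness (the image chain is strictly increasing, using $A$-regularity with $t=s^a$) and supply the inductive argument for injectivity, correctly applying $A$-regularity in the form $s_i'\le s_d'$, $(s_i')^b\le s_d'$ to conclude $(s_i')^b=s_i'$.  Both routes are valid; yours trades the black-boxed topological input for a slightly longer but self-contained combinatorial verification.
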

\begin{proof}
  \eqref{prop:orbitdelta1} Since the standard homoe\m\
  $|\Delta(\sd{\cat S{}{}})| \to |\Delta(\cat S{}{})|$ between the
  topological realizations of the subdivided and the original poset
  \cite[\S3, Theorem 9]{spanier} is $A$-equivariant by construction,
  it induces a homeo\m\ between the $A$-orbit spaces.

 \noindent \eqref{prop:orbitdelta2} The poset \m\ 
 $\cat S{}{} \to \cat S{}{}/A$ induces a poset \m\ 
 $\sd{\cat S{}{}} \to \sd{\cat S{}{}/A}$ and a $\Delta$-set \m\
 $\Delta(\cat S{}{}) \to \Delta(\cat S{}{}/A)$. When $\cat S {}{}$ is
 $A$-regular these induced \m s induce iso\m s 
 $\sd{\cat S{}{}}/A \to \sd{\cat S{}{}/A}$ and
 $\Delta(\cat S{}{})/A \to \Delta(\cat S{}{}/A)$ because
 $\{s^a \mid s \in \sigma \} = \{ t^a \mid t \in \tau \} \implies
 \sigma^A = \tau^A$ for all simplices $\sigma$ and $\tau$ in $\cat
 S{}{}$. 

 \noindent \eqref{prop:orbitdelta3} We must prove that $\sigma
 \subseteq \tau, \; \sigma^a \subseteq \tau \implies \sigma =
 \sigma^a$ for all simplices $\sigma$, $\tau$ in $\cat S{}{}$ and $a
 \in A$. The elements of the totally ordered subset $\tau$ are
 determined by their values under the rank function. Subsets of $\tau$
 are therefore determined by their images under the rank
 function. Since the rank function is $A$-invariant, $\sigma$ and
 $\sigma^a$ have the same images, so they must be equal. The first
 equivariant \Euc\ is
 \begin{equation*}
   \chi_1(\cat S{}{},A) 
   \stackrel{\text{\tiny Prop~\ref{prop:quocomplex}}}{=} 
   \chi(\Delta(\cat S{}{})/A)) \stackrel{\eqref{prop:orbitdelta1}}{=} 
   \chi(\Delta(\sd{\cat S{}{}})/A)
   \stackrel{\eqref{prop:orbitdelta2}}{=} 
   \chi(\Delta(\sd{\cat S{}{}/A})) = \chi(\sd{\cat S{}{}}/A)
 \end{equation*}
 because $\sd{\cat S{}{}}$ is $A$-regular.

 \noindent \eqref{prop:orbitdelta4} The dimension function is an
 $A$-invariant rank function on $\sd{\cat S{}{}}$. The first
 equivariant \Euc\ is
 \begin{equation*}
   \chi_1(\cat S{}{},A) 
   \stackrel{\text{\tiny Prop~\ref{prop:quocomplex}}}{=} 
   \chi(\Delta(\cat S{}{})/A)) \stackrel{\eqref{prop:orbitdelta1}}{=} 
   \chi(\Delta(\sd{\cat S{}{}})/A)
   \stackrel{\eqref{prop:orbitdelta1}}{=}
   \chi(\Delta(\mathrm{sd}^2(\cat S{}{}))/A)
   \stackrel{\eqref{prop:orbitdelta2}}{=}
   \chi(\Delta(\mathrm{sd}^2(\cat S{}{})/A)) =
   \chi(\mathrm{sd}^2(\cat S{}{}/A) 
 \end{equation*}
 since $\mathrm{sd}^2(\cat S{}{})$ is $A$-regular by
 \eqref{prop:orbitdelta3}.
\end{proof}

For any finite $A$-group $G$ and any subset $X$ of $A$, the
$X$-centralized Brown poset $C_{\cat SG{p+*}}(X)$ is a graded
$N_A(X)$-poset with rank function $\rho$ given by $|H| = p^{\rho(H)}$
for any $X$-normalized $p$-subgroup $H$ of $G$. Thus the first
equivariant \Euc\ of Proposition~\ref{prop:quocomplex}
\begin{equation*}
  \chi_1(C_{\cat SG{p+*}}(X),C_A(X)) = 
  \chi(\sd{C_{\cat SG{p+*}}(X)}/C_A(X))
\end{equation*}
can be expressed entirely within the category of posets.

\begin{exmp}\label{exmp:deltaset} 
  In case $G = \GL 3{\F_2}$ and $p=2$ the $2$-dimensional contractible
  orbit $\Delta$-set $\Delta(\cat SG{p+*})/G$
 \begin{center}
   \begin{tikzpicture}[>=stealth']
     \node (S0) at (-6.7,0) {$\{1,\ldots,5\}$};
     \node (S1) at (0,0) {$\{1,\ldots,9\}$};
     \node (S2) at (4.7,0) {$\{1,\ldots,5\}$};
     \draw[->] (S1.175) -- (S0.0 |- S1.175) node[pos=.5,above]   
    {$ 
      \begin{pmatrix}
        1 & 2 & 3 & 5 & 5 & 5 & 5 & 5 & 5 \\
        4 & 4 & 4 & 4 & 1 & 3 & 2 & 4 & 4
      \end{pmatrix}$};
    \draw[->] (S1.185) -- (S0.0 |- S1.185);
    \draw[->] (S2.170) -- (S1.0 |- S2.170) node[pos=.5,above]
    {$
      \begin{pmatrix}
        5 & 7 & 6 & 5 & 7 \\
        4 & 4 & 4 & 8 & 9 \\
        1 & 2 & 3 & 1 & 2
      \end{pmatrix}$};
     \draw[->] (S2.180) -- (S1.0 |- S2.180);
     \draw[->] (S2.190) -- (S1.0 |- S2.190);
   \end{tikzpicture}
 \end{center}
 is not a simplicial complex as three $1$-simplices connect the
 $0$-simplices $4$ and $5$. In the realization, $2$-simplices $1$, $4$
 and $2$,$5$ form two cones joined along the $1$-simplex $4$ to which
 also one edge of $2$-simplex $3$ is attached.
\end{exmp}

\section{The poset of $p$-subgroups}
\label{sec:centposet}

Write $\cat SG{}$ for the poset of all subgroups of $G$ ordered by
inclusion, $\cat SGp$ for the poset of all $p$-subgroups, and $\cat
SG{p+*}$ for the Brown poset of all nonidentity $p$-subgroups of $G$.

For any $p$-subgroup $K$ of $G$, the poset $\cat SK{}$ of subgroups of
$K$ has both an initial, $1$, and a terminal element, $K$. We write
$\cat SK{(1,K)}$ for the poset of nonidentity and proper subgroups of
$K$. The \Mb\ function on $\cat SK{}$ is the restriction of the \Mb\
function $\mu$ for $\cat SG{}$ \cite[Remark 2.6]{jmm_mwj:2010}. It is
well-known that $\rchi(\cat SK{(1,K)}) = \mu(1,K)$ when $K$ is
nontrivial. In fact, if $\cat S{}{}$ is any finite poset with \Mb\
function $\mu$, $a \in \cat S{}{}$, $b \in \cat S{}{}$ with $a < b$,
and $(a,b)$ and $[a,b]$ are the open and closed intervals from $a$ to
$b$, then
\begin{align*}
  1 &= \chi([a,b]) = \sum_{a \leq s,t \leq b} \mu(s,t) 
  = \sum_{a < s,t < b} \mu(s,t) + \sum_{a \leq t \leq b} \mu(a,t)
  + \sum_{a \leq s \leq b} \mu(s,b) - \mu(a,b) \\
  &= \chi(a,b) + \delta(a,b)+\delta(a,b)-\mu(a,b) 
  = \chi(a,b)-\mu(a,b)
\end{align*}
by the defining properties of \Mb\ functions \cite[3.3.7]{stanley97}.

For any $p$-subgroup $H$ of $G$ we write $\cat OG{}(H)$ for
$N_G(H)/H$ (the auto\m\ group of $H$ in the orbit category $\cat
OGp$).

\subsection{\Euc\ of the Brown poset $\cat SG{p+*}$}
\label{sec:chiSGp}
The arguably most basic fact about the Brown poset is its
contractibility when $O_p(G)$ is nontrivial.
\begin{lemma}\cite[Proposition 2.4]{quillen78}\label{lemma:quillenOpG}
  $O_p(G) \neq 1 \implies \cat SG{p+*} \simeq \ast \implies
  \chi(\cat SG{p+*}) = 1$
\end{lemma}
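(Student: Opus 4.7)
The plan is to exhibit the classical ``conical contraction'' of the Brown poset through the normal $p$-subgroup $P := O_p(G)$. Since $P$ is a nontrivial $p$-subgroup, it is itself an object of $\cat SG{p+*}$, so the Brown poset is nonempty, and the idea is to show that every nonidentity $p$-subgroup $H$ of $G$ can be connected to $P$ by a canonical zigzag inside $\cat SG{p+*}$.

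First I would observe that for any $H \in \cat SG{p+*}$, the product $HP$ is again a $p$-subgroup of $G$: since $P \trianglelefteq G$, the subset $HP$ is a subgroup, and $HP/P \cong H/(H \cap P)$ is a $p$-group, while $P$ is a $p$-group, so $HP$ is a $p$-group. Clearly $HP$ is nonidentity, so $HP \in \cat SG{p+*}$. This defines a poset endomap $f \colon \cat SG{p+*} \to \cat SG{p+*}$, $H \mapsto HP$, which is order preserving (if $H \leq K$ then $HP \leq KP$).

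Next I would set up two pointwise inequalities that are natural transformations between poset maps $\cat SG{p+*} \to \cat SG{p+*}$: the identity $\mathrm{id}$, the map $f$, and the constant map $c_P$ with value $P$. The inequalities $H \leq HP$ and $P \leq HP$ give natural transformations $\mathrm{id} \Rightarrow f \Leftarrow c_P$. Passing to classifying spaces (order complexes) yields a zigzag of homotopies between the identity on $B\cat SG{p+*}$ and the constant map at $P$, so $B\cat SG{p+*} \simeq \ast$. The Euler characteristic conclusion $\chi(\cat SG{p+*}) = 1$ is then immediate since $\chi$ of a contractible finite complex equals $1$.

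There is no real obstacle here; the only thing to verify carefully is that $HP$ stays inside the poset $\cat SG{p+*}$ for every $H$ in it (which uses precisely the normality of $O_p(G)$), and that the two comparisons $H \leq HP \geq P$ are genuine natural transformations of order-preserving maps. Both are routine.
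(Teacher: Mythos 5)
Your argument is correct and is exactly the standard conical contraction $H \leq HO_p(G) \geq O_p(G)$ that Quillen gives and that the paper reuses (in equivariant form) in Lemma~\ref{lemma:OpHA}. Nothing to add.
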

The \we\ and the co\we\ in the sense of Leinster
\cite[Definition~1.10]{leinster08} for the Brown poset are known.

\begin{prop}\cite[Theorems 1.1 and
  1.3]{jmm_mwj:2010}\label{prop:SGpwecowe} 
  The functions
  \begin{equation*}
    k^H = -\rchi(H// \cat SGp) = -\rchi(\cat S{\cat OG{}(H)}{p+*}), \qquad
    k_K = -\rchi(\cat SK{(1,K)})
  \end{equation*}
  are a \we\ for $\cat SGp$ and a co\we\ for $\cat SG{p+*}$. The \Euc\
  of $\cat SG{p+*}$ is
\begin{equation*}
  \sum_{H \in \cat SG{p+*+\mathrm{rad}}} -\rchi(\cat S{\cat
    OG{}(H)}{p+*}) =
  \chi(\cat SG{p+*}) =
  \sum_{K \in \cat SG{p+*+\mathrm{eab}}}  -\rchi(\cat SK{(1,K)})
\end{equation*}
\end{prop}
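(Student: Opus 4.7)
The plan is to establish the weighting and coweighting formulas by Möbius inversion on the subgroup lattice, reinterpret the resulting numerical expressions as reduced Euler characteristics via the identity $\rchi(a,b)=\mu(a,b)$ displayed just above the statement, and then extract the two Euler characteristic decompositions by vanishing arguments that kill all non-radical (respectively non-elementary-abelian) contributions.

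The coweighting on $\cat SG{p+*}$ is the easier half. From Möbius inversion on the closed interval $[1,K]$ one has $\sum_{1\leq H\leq K}\mu(1,H)=\delta_{1,K}=0$ for $K>1$, so isolating $\mu(1,1)=1$ gives $\sum_{H\in\cat SG{p+*},\,H\leq K}(-\mu(1,H))=1$, which is precisely the defining relation of a coweighting on $\cat SG{p+*}$. The identification $-\mu(1,K)=-\rchi(\cat SK{(1,K)})$ is then immediate from $\rchi(a,b)=\mu(a,b)$.

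For the weighting on $\cat SGp$, Möbius inversion of $\zeta k^\bullet=1$ yields $k^H=\sum_{K\in\cat SGp,\,K\geq H}\mu(H,K)$. To recognise this as $-\rchi(H//\cat SGp)$ I would adjoin a formal maximum $\hat 1$ to $\cat SGp$: the Möbius relation $\sum_{H\leq K\leq\hat 1}\mu(H,K)=0$ rearranges to $\mu(H,\hat 1)=-k^H$, while $\rchi(a,b)=\mu(a,b)$ applied to the interval $(H,\hat 1)=H//\cat SGp$ of the augmented poset gives $\rchi(H//\cat SGp)=\mu(H,\hat 1)$. The second equality $-\rchi(H//\cat SGp)=-\rchi(\cat S{\cat OG{}(H)}{p+*})$ is what I expect to be the main obstacle. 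I would handle it by identifying $\cat S{\cat OG{}(H)}{p+*}$, through the quotient map $K\mapsto K/H$, with the sub-poset of $H//\cat SGp$ consisting of those $p$-subgroups $K>H$ with $K\leq N_G(H)$; the inclusion of this sub-poset then admits the order-preserving retraction $K\mapsto N_K(H)$, which satisfies $N_K(H)\leq K$ and is the identity on the sub-poset. These properties form a poset adjunction, so Proposition~\ref{prop:adjunction} produces a homotopy equivalence of order complexes, and in particular equal reduced Euler characteristics.

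For the two decompositions of $\chi(\cat SG{p+*})$, I would apply the identity $\chi(\cat S{}{})=\sum_t k^t=\sum_s k_s$ from the theory of weightings. Since $\cat SGp=\cat SG{p+*}\cup\{1\}$ has $1$ as initial element and $k^1=-\rchi(\cat SG{p+*})$, isolating this term in $\chi(\cat SGp)=1$ yields $\chi(\cat SG{p+*})=\sum_{H\in\cat SG{p+*}}k^H$. Restriction to $G$-radical subgroups then follows from Lemma~\ref{lemma:quillenOpG}: if $H$ is not $G$-radical, then $O_p(N_G(H)/H)=O_p(N_G(H))/H\neq 1$, so $\cat S{\cat OG{}(H)}{p+*}\simeq\ast$ and $k^H=0$. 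The reduction of the coweighting sum to elementary abelian subgroups is the classical theorem of P.~Hall asserting that $\mu(1,K)$ vanishes on the subgroup lattice of a $p$-group $K$ unless $K$ is elementary abelian.
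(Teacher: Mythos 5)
Your proof is correct and follows the same route that the paper (and its cited reference) take: Möbius inversion for the weighting and coweighting formulas, the adjunction $K \mapsto N_K(H)$ to identify $\rchi(H//\cat SGp)$ with $\rchi(\cat S{\cat OG{}(H)}{p+*})$ (this is the same adjunction the paper imports from Gelvin--M{\o}ller in the proof of Lemma~\ref{lemma:wtA}), Quillen's Lemma~\ref{lemma:quillenOpG} for the radical vanishing, and the classical Hall/Kratzer--Th\'evenaz theorem for the elementary abelian vanishing. The paper states this proposition as a citation rather than giving a proof, but your argument matches the toolkit deployed throughout Sections~\ref{sec:centposet} and~\ref{sec:chiSGpA}.
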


The \we\ is concentrated on the nonidentity $G$-radical $p$-subgroups
(Lemma~\ref{lemma:quillenOpG}) and the co\we\ on the elementary
abelian $p$-subgroups \cite[Lemma 3.2]{jmm_mwj:2010}.  We observed
above that the co\we\ can also be viewed as a \Mb\ function. 
Let $K$ be any $p$-subgroup of $G$.  Since the \we\ for $\cat SGp$
restricts to a \we\ for the
contractible left ideal $\cat SG{p \geq K}$ of $p$-subgroups of $G$
containing $K$ \cite[Remark 2.6]{jmm_mwj:2010} there is a 
relation
\begin{equation}
  \label{eq:affine}
       \sum_{K \in H / \cat SG{p+\mathrm{rad}}}
  -\rchi(\cat S{\cat OG{}(K)}{p+*}) = 1
\end{equation}
between the \Euc s of the Brown posets of $\cat OG{}(K) = N_G(K)/K$
for $K \in [H,G]$.

\begin{prop}\label{prop:weSGprad}
  The \we\ $k^H \colon \cat SGp \to \Z$ for $\cat SGp$ is given by
  \begin{equation*}
    k^H =
    \begin{cases}
      k^{[H]} & \text{$H$ is $G$-radical} \\
      0 & \text{otherwise}
    \end{cases}
  \end{equation*}
  where $k^{[H]} \colon \cat SG{p+\mathrm{rad}}/G \to \Z$ is the \we\
  for $\cat SG{p+\mathrm{rad}}/G$
  (Definition~\ref{defn:wecoweSA}). The co\we\ $k_K \colon \cat
  SG{p+*} \to \Z$ 
  for $\cat SG{p+*}$ is given by
  \begin{equation*}
    k_K =
    \begin{cases}
      k_{[K]} & \text{$K$ is elementary abelian} \\
      0 & \text{otherwise}
    \end{cases}
  \end{equation*}
  where $k_{[K]} \colon \cat SG{p+*+\mathrm{eab}} \to \Z$ is the
  co\we\ for $\cat SG{p+*+\mathrm{eab}}/G$.
\end{prop}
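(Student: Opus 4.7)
The proof plan is to treat the weighting and coweighting symmetrically via the explicit formulas of Proposition~\ref{prop:SGpwecowe}: in each case I (i) show the function vanishes outside the claimed support, (ii) deduce that the restriction to the support is again a weighting (resp.\ coweighting) on the sub-poset, and (iii) invoke Corollary~\ref{cor:uweSA} to identify that restriction with the unique weighting (resp.\ coweighting) on the corresponding orbit poset.

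For the weighting $k^H = -\rchi(\cat S{\cat OG{}(H)}{p+*})$, the key observation is that $H$ is a normal $p$-subgroup of $N_G(H)$, so the correspondence theorem yields $O_p(N_G(H)/H) = O_p(N_G(H))/H$; hence $H$ is $G$-radical if and only if $O_p(\cat OG{}(H)) = 1$. When $H$ is not $G$-radical, $O_p(\cat OG{}(H))$ is nontrivial, so Lemma~\ref{lemma:quillenOpG} makes $\cat S{\cat OG{}(H)}{p+*}$ contractible and forces $k^H = 0$.

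With this vanishing in hand, the defining identity $\sum_{t \in \cat SGp,\, t \geq s} k^t = 1$ becomes, for $s \in \cat SG{p+\mathrm{rad}}$, a sum only over $G$-radical $t \geq s$. Thus the restriction of $k^H$ to $\cat SG{p+\mathrm{rad}}$ is a weighting for that subposet; it is $G$-invariant by Lemma~\ref{lemma:Ainvmu}, and Corollary~\ref{cor:uweSA} identifies the induced function on $\cat SG{p+\mathrm{rad}}/G$ with the unique weighting $k^{[H]}$, yielding $k^H = k^{[H]}$ on the $G$-radical locus.

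The coweighting argument is entirely parallel. By Proposition~\ref{prop:SGpwecowe}, $k_K = -\rchi(\cat S{K}{(1,K)})$. When $K$ is a $p$-group that is not elementary abelian, the Frattini subgroup $\Phi(K)$ is a nontrivial proper subgroup contained in every maximal subgroup of $K$, so $M \mapsto M \cdot \Phi(K)$ is a well-defined closure operator on $\cat S{K}{(1,K)}$ onto the subposet of subgroups containing $\Phi(K)$, which has minimum $\Phi(K)$; this gives contractibility and hence $k_K = 0$ off the elementary abelian locus (compare \cite[Lemma~3.2]{jmm_mwj:2010}). The same restriction-and-uniqueness argument as above then identifies $k_K$ with $k_{[K]}$ on $\cat SG{p+*+\mathrm{eab}}/G$. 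The only nonformal step in the whole argument is this Frattini contractibility; once it and the radical characterization via $O_p$ are established, the identifications follow automatically from uniqueness of weightings and Corollary~\ref{cor:uweSA}.
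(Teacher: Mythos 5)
Your proof is correct. It takes a somewhat different route than the paper's, particularly for the weighting. The paper deduces $k^H = -\rchi(H//\cat SG{p+\mathrm{rad}})$ from Bouc's theorem (for $G$-radical $H$, the inclusion $H//\cat SG{p+\mathrm{rad}} \hookrightarrow H//\cat SGp$ is a homotopy equivalence), and then recognizes the right-hand side as the coslice formula for the weighting on $\cat SG{p+\mathrm{rad}}$. You bypass the homotopy-theoretic input entirely: since $\cat SG{p+\mathrm{rad}}$ is a full subposet and the weighting vanishes off it, the defining relation $\sum_{t \geq s} k^t = 1$ restricts verbatim to $\cat SG{p+\mathrm{rad}}$ once $s$ is $G$-radical, and that is exactly the weighting condition on $\cat SG{p+\mathrm{rad}}$. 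This ``vanishing plus restriction'' argument is strictly more elementary --- a computation in the $\zeta$-matrix rather than an invocation of Bouc --- and it correctly copes with the fact that $\cat SG{p+\mathrm{rad}}$ is not a left ideal, which is precisely why the paper cannot use the ideal-restriction principle and resorts to Bouc's theorem instead (obtaining a stronger homotopy statement than the proposition actually needs). On the coweighting side the arguments nearly coincide: the paper cites the right-ideal property of the elementary abelian locus, which makes restriction automatic, together with the already-known Frattini vanishing; you recapitulate the Frattini contraction and run the vanishing-plus-restriction argument again. For a right ideal the two restriction arguments are equivalent, so this half is essentially the same; your self-contained proof of the Frattini contraction is a nice touch but is also exactly what the paper cites from \cite[Lemma~3.2]{jmm_mwj:2010}.
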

\begin{proof}
  We noted above that the \we\ $k^\bullet$ for $\cat SGp$ vanishes off
  the subposet $\cat SG{p+\mathrm{rad}}$ of $G$-radical $p$-subgroups.
  If $H$ is any $G$-radical $p$-subgroup of $G$, then the inclusion $H
  // \cat SG{p+\mathrm{rad}} \hookrightarrow H // \cat SGp$ is a
  homotopy equivalence by Bouc's theorem \cite{bouc84a} \cite[Theorem
  4.2]{gm:2012} because
  \begin{equation*}
    J // (H // \cat SGp) \not\simeq\ast \iff 
    J // \cat SGp  \not\simeq\ast \iff
    \cat S{N_G(J)/J}{p+*} \not\simeq\ast 
   \stackrel{\text{\tiny Lemma~\ref{lemma:quillenOpG}}}{\implies}
    J \in H// \cat SG{p+\mathrm{rad}}
  \end{equation*}
  for any $J \in H // \cat SGp$.  Thus the \we\ for $\cat SGp$ on $H$,
  $k^H = -\rchi( H // \cat SGp) = -\rchi( H // \cat
  SG{p+\mathrm{rad}})$, is the \we\ for $\cat SG{p+\mathrm{rad}}$ on
  $H$.   The \we\ on $\cat SG{p+\mathrm{rad}}$ is invariant under conjugation
  by $G$ so it induces the unique \we\ $k^{[H]} \colon \cat
  S{G}{p+\mathrm{rad}}/G \to \Z$ defined on the set of conjugacy
  classes of $G$-radical $p$-subgroups $[H]$ of $G$
  (Corollary~\ref{cor:uweSA}).

   The part about co\we s is clear as the nonidentity elementary
   abelian $p$-subgroups form a right ideal in the poset of
   nonidentity $p$-subgroups (\S\ref{sec:computechi}).
\end{proof}

\begin{defn}\label{defn:SGH[K]}
  When $H$ and $K$ are subgroups of $G$, $\cat SG{}(H,[K])$ is the
  number of conjugates of $K$ containing $H$, and $\cat SG{}([H],K)$
  is the number of conjugates of $H$ contained in $K$.
\end{defn}

In particular, $\cat SG{}(1,[K]) = |G : N_G(K)|$ is the length of $K$,
$\cat SG{}([H],G) = |G : N_G(H)|$ is the length of $H$, and $\cat
SG{}(H,[G]) = 1 = \cat SG{}([1],K)$.

\begin{lemma}\label{lemma:SGH[K]}
 For any two subgroups $H$, $K$ of $G$,
 \begin{equation*}
   \cat SG{}(H,[K]) = \frac{|\cat OG{}(H,K)|}{|\cat OG{}(K)|},
                    \quad
   \cat SG{}([H],K) 
   =\frac{|K|}{|H|} \frac{|\cat OG{}(H,K)|}{|\cat OG{}(H)|}
   = \frac{|\cat FG{}(H,K)|}{|\cat FG{}(H)|}, \quad
   \frac{\cat SG{}(H,[K])}{\cat SG{}([H],K)} = \frac{|N_G(H)|}{|N_G(K)|}
 \end{equation*}
\end{lemma}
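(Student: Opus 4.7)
The plan is to reduce every quantity in the lemma to cardinalities of the transporter set $N_G(H,K)$ or of double/one-sided coset spaces of $N_G(H,K)$, and then to observe that the four claimed equalities become trivial arithmetic identities.

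First I would verify the two basic bijections. On the one hand, the conjugates of $K$ containing $H$ are parametrized by $N_G(H,K)/N_G(K)$: an element $g \in G$ satisfies $H \le K^g$ exactly when $g^{-1} \in N_G(H,K)$ (equivalently, after replacing $g$ by $g^{-1}$, $g \in N_G(H,K)$), and two such $g,g'$ produce the same conjugate iff they lie in the same right coset of $N_G(K)$; one should check that $N_G(K)$ indeed acts on $N_G(H,K)$ by right multiplication. On the other hand, the conjugates of $H$ contained in $K$ are parametrized by $N_G(H)\backslash N_G(H,K)$ for the analogous reason. Hence
\begin{equation*}
  \cat SG{}(H,[K]) = |N_G(H,K)|/|N_G(K)|,\qquad
  \cat SG{}([H],K) = |N_G(H,K)|/|N_G(H)|.
\end{equation*}

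Next I would unfold the $\cat O$- and $\cat F$-quantities on the right-hand sides. Since $K$ acts freely by right multiplication on $N_G(H,K)$, we have $|\cat OG{}(H,K)| = |N_G(H,K)|/|K|$ and $|\cat OG{}(K)| = |N_G(K)|/|K|$; similarly $|\cat FG{}(H,K)| = |N_G(H,K)|/|C_G(H)|$ and $|\cat FG{}(H)| = |N_G(H)|/|C_G(H)|$. Substituting into the two displayed expressions in the lemma yields
\begin{equation*}
  \frac{|\cat OG{}(H,K)|}{|\cat OG{}(K)|} = \frac{|N_G(H,K)|}{|N_G(K)|},\qquad
  \frac{|K|}{|H|}\frac{|\cat OG{}(H,K)|}{|\cat OG{}(H)|} = \frac{|N_G(H,K)|}{|N_G(H)|} = \frac{|\cat FG{}(H,K)|}{|\cat FG{}(H)|},
\end{equation*}
which match the formulas for $\cat SG{}(H,[K])$ and $\cat SG{}([H],K)$ found in the first step. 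The final ratio identity is then immediate: the common factor $|N_G(H,K)|$ cancels, leaving $|N_G(H)|/|N_G(K)|$.

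There is no real obstacle here; the only subtle point is keeping the left/right conventions straight so that $N_G(H,K)$ carries commuting left and right actions by $N_G(H)$ and $N_G(K)$ respectively (with $K$ acting freely on the right through $N_G(K)$ and $C_G(H)$ acting freely on the left through $N_G(H)$). Once those actions are in place, the whole lemma is a single application of the identity $|X/\Gamma| = |X|/|\Gamma|$ for a free action.
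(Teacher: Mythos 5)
Your proposal is correct and is essentially the same argument the paper gives: both proofs identify $\cat SG{}(H,[K])$ with $|N_G(H,K)/N_G(K)|$ and $\cat SG{}([H],K)$ with $|N_G(H)\backslash N_G(H,K)|$ via the map $g \mapsto H^g$, and then translate the $\cat OG{}$- and $\cat FG{}$-cardinalities into multiples of $|N_G(H,K)|$ using the free right $K$-action and free left $C_G(H)$-action. The only cosmetic difference is that you phrase everything through the single formula $|X/\Gamma|=|X|/|\Gamma|$ for free actions, whereas the paper spells out the surjection and its fibres.
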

\begin{proof}
 There is a
surjection
\begin{equation*}
  N_G(H,K) \to \{J \in [H] \mid J \leq K \} \colon g \to H^g
\end{equation*}
and the fibre over $J=H^x$ is $\{g \in G \mid H^g = H^x \} =
N_G(H)x$.
Since thus
\begin{equation*}
  N_G(H)  \backslash N_G(H,K) = \{ J \in [H] \mid J \leq K
  \}, \qquad
  N_G(H,K)/N_G(K) = \{ L \in [K] \mid H \leq L \}
\end{equation*}
we have
\begin{equation*}
  |N_G(H)| \cat SG{}([H],K) =  |N_G(H,K)| =
  \cat SG{}(H,[K]) |N_G(K)|
\end{equation*}
and also
\begin{align*}
  &|\cat OG{}(H,K)| = |N_G(H,K)| |K|^{-1} = 
   \cat SG{}(H,[K]) |\cat OG{}(K)| \\
  &|\cat FG{}(H,K)| = |C_G(H)|^{-1} |N_G(H,K)| = 
  |\cat FG{}(H)| \cat SG{}([H],K) 
\end{align*}
This finishes the proof.  
\end{proof}

The point of Proposition~\ref{prop:weSGprad} is that the \we\ $k^H$
for $\cat SGp$ restricts to the \we\ for the $G$-radical subposet
$\cat SG{p+\mathrm{rad}}$ and that the \we\ for $\cat
SG{p+\mathrm{rad}}/G$ can be computed by solving the linear equation
\begin{equation}
  \label{eq:lineqweS}
   \left( \cat SG{p+\mathrm{rad}}(H,[K]) 
   \right)_{H,[K] \in \cat SG{p+\mathrm{rad}}/G}
  \begin{pmatrix}
    \vdots \\ k^{[K]} \\ \vdots 
  \end{pmatrix}_{[K] \in \cat SG{\mathrm{rad}}/G} =
  \begin{pmatrix}
    \vdots \\ 1 \\ \vdots
  \end{pmatrix}
\end{equation}
Similarly, the co\we\ for $\cat SG{p+*}$ can be computed entirely
inside the subposet  $\cat SG{p+*+\mathrm{eab}}$ by solving the linear
equation
\begin{equation}
  \label{eq:lineqcowe}
  \begin{pmatrix}
    \hdots & k_{[H]} & \hdots
  \end{pmatrix}
   \left( \cat SG{p+\mathrm{eab}}([H],K) 
   \right)_{H,K \in \cat SG{p+\mathrm{eab}}/G} =
   \begin{pmatrix}
     \hdots & 1 & \hdots
   \end{pmatrix}
\end{equation}
If $k^{[K]}$ is the \we\ for $\left( \cat SG{p}(H,[K]) \right)_{H,K
  \in \cat SG{p}/G}$ and $k_{[H]}$ the co\we\ for $\left( \cat
  SG{p+*}(H,[K]) \right)_{H,K \in \cat SG{p+*}/G}$, then
$k^{[K]}/|\cat OG{p}(K)|$ is the \we\ for $[\cat OGp]$ and
$k_{[H]}/|\cat FG{p+*}(H)|$ the co\we\ for $[\cat FG{p+*}]$
\cite[\S2.4]{jmm_mwj:2010}.


\begin{exmp}\label{exmp:boreltits}  
  The radical $2$-subgroups of $\GL 3{\F_2}$ 
  \begin{equation*}
    U_\emptyset =
    \begin{pmatrix}
      1 & \ast & \ast \\ 0 & 1 & \ast \\ 0 & 0 & 1
    \end{pmatrix}, \quad
    U_{\{1\}} =
    \begin{pmatrix}
      1 & 0 & \ast \\ 0 & 1 & \ast \\ 0 & 0 & 1
    \end{pmatrix}, \quad
    U_{\{2\}} =
    \begin{pmatrix}
      1 & \ast & \ast \\ 0 & 1 & 0 \\ 0 & 0 & 1
    \end{pmatrix}, \quad
    U_{\Pi} = 1
  \end{equation*}
  are indexed up to conjugacy by subsets of the set
  $\Pi=\{\alpha_1,\alpha_2\}$ of fundamental roots for the root system
  $A_2$ according to the Borel--Tits theorem \cite[Theorem 3.1.3,
  Corollary 3.1.5]{GLSIII}. The \we\ $k^{[U_J]}= -\rchi(U_J//\cat
  S{\GL 3{\F_2}}{\mathrm{rad}}) = \rchi(\cat S{L_J}{2+*})$ for $\cat
  S{\GL 3{\F_2}}{\mathrm{rad}}/\GL 3{\F_2}$ lists the reduced \Euc s
  of the Levi complements $L_J = N_{\GL 3{\F_2}}(U_J)/U_J$
  \cite[Theorem 2.6.5]{GLSIII} and is the solution to the linear
  equation~\eqref{eq:lineqweS}
  \begin{equation*}
    \begin{pmatrix}
       1 &  0 & 0 & 0 \\  3 & 1 & 0 & 0 \\
       3 & 0 & 1 &  0 \\ 21 &  7 & 7 & 1
    \end{pmatrix}
    \begin{pmatrix}
       1 \\ -2 \\  -2 \\ 8
    \end{pmatrix} =
    \begin{pmatrix}
      1 \\ 1 \\ 1 \\ 1
    \end{pmatrix}
  \end{equation*}
  where the matrix is the table $(\cat S{\GL
    3{\F_2}}{2+\mathrm{rad}}(U_J,[U_K]))_{J,K \subset \Pi}$. In
  particular, $-\rchi(1 // \cat S{\GL 3{\F_2}}2) = -\rchi(\cat S{\GL
    3{\F_2}}{2+\ast}) = 8$. 
  
  The situation is completely different in the cross characteristic
  case.  With computer assistance one finds that $\SL 3{\F_3}$
  contains four $\SL 3{\F_3}$-radical subgroups of orders $ 16, 8, 4,
  1 $. The table $(\cat S{\SL 3{\F_3}}{2+\mathrm{rad}}(H,[K])$ and the
  \we\ $k^{[K]}$ for $\cat S{\SL 3{\F_3}}{2+\mathrm{rad}}/G$ satisfy
  the linear equation
  \begin{equation*}
    \begin{pmatrix}
      1 &  0 &  0 &   0 \\ 3 &  1 &  0 &  0 \\
      3 &  0 &  1 &  0 \\ 351 & 117 & 234 &  1
    \end{pmatrix}
    \begin{pmatrix}
       1 \\ -2 \\ -2 \\ 352
    \end{pmatrix} =
    \begin{pmatrix}
      1 \\ 1 \\ 1 \\ 1
    \end{pmatrix}
  \end{equation*}
  In particular, $-\rchi(\cat S{\SL 3{\F_3}}{2+*}) = 352$. 
\end{exmp}

\subsection{\Euc s of centralized subposets of  $\cat SG{p+*}$}
\label{sec:chiSGpA}
Suppose now that the group $A$ acts on $G$.  The centralizer of $A$ in
$\cat SG{p+*}$ is the $N_G(A)$-subposet
\begin{equation*}
C_{\catp SG*}(A) = \{ H \in \catp SG* \mid [H,A] \leq H \}  
\end{equation*}
of nontrivial $A$-normalized $p$-subgroups $H$ of $G$
(Definition~\ref{defn:centralizercat}).

As in the non-equivariant case (Lemma~\ref{lemma:quillenOpG}) this
centralizer poset is contractible as soon as $O_p(G)$ is nontrivial.


\begin{lemma}\label{lemma:OpHA}
  $O_p(G) \neq 1 \implies C_{\catp SG*}(A) \simeq \ast 
  \implies \chi(C_{\catp SG*}(A))=1$
\end{lemma}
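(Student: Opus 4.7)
The plan is to adapt the classical contractibility argument of Quillen (Lemma~\ref{lemma:quillenOpG}) to the equivariant setting using the fact that $O_p(G)$ is a characteristic subgroup of $G$.

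Let $Q = O_p(G)$ and assume $Q \neq 1$. First, I would observe that $Q$ lies in the centralized poset $C_{\cat SG{p+*}}(A)$: since $A$ acts on $G$ by group automorphisms, it permutes the normal $p$-subgroups of $G$, so it fixes the largest one, namely $Q$. Hence $Q$ is a nontrivial $A$-normalized $p$-subgroup.

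Next, for any $H \in C_{\cat SG{p+*}}(A)$, the product $HQ$ is again a $p$-subgroup (since $Q$ is normal in $G$), and it is $A$-normalized because $(HQ)^a = H^a Q^a = HQ$ for all $a \in A$, using that both $H$ and $Q$ are $A$-stable. Thus $H \mapsto HQ$ defines an $A$-equivariant endofunctor $F$ of $C_{\cat SG{p+*}}(A)$, and the inclusions
\begin{equation*}
   H \ \leq\ HQ \ \geq\ Q
\end{equation*}
give $A$-natural transformations $\mathrm{id}_{C_{\cat SG{p+*}}(A)} \Rightarrow F \Leftarrow c_Q$, where $c_Q$ denotes the constant functor at the object $Q$. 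By the standard fact that a natural transformation between poset maps induces a homotopy on classifying spaces (and $A$-equivariantly so in our setting, cf.\ Proposition~\ref{prop:adjunction}), we conclude that $C_{\cat SG{p+*}}(A) \simeq *$, and therefore $\chi(C_{\cat SG{p+*}}(A)) = 1$.

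The only subtle point is the first step, that $A$ stabilizes $O_p(G)$; this is immediate because $A$ acts on $G$ by automorphisms and $O_p(G)$ is characteristic in $G$. The rest of the argument is the standard Quillen zigzag, merely checked to be compatible with the $A$-action, which it automatically is because all three objects $H$, $HQ$, $Q$ are $A$-normalized.
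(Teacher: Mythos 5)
Your argument is correct and is essentially the same as the paper's: both use that $O_p(G)$ is characteristic, hence $A$-normalized, and then contract via the zigzag $H \leq HO_p(G) \geq O_p(G)$. You have simply spelled out in more detail the verification that each object in the zigzag is $A$-normalized.
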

\begin{proof}
  The characteristic subgroup $O_p(G)$ is a nonidentity $A$-normalized
  normal $p$-subgroup of $G$.  The natural transformations 
  $K \leq KO_p(G) \geq O_p(G)$ link the identity functor to a
  constant endofunctor of $C_{\cat SG{p+*}}(A)$.
\end{proof}


We can easily determine the unique \we\ and co\we\ in the sense of
Leinster \cite{leinster08} on $C_{\catp SG*}(A)$.

\begin{lemma}\label{lemma:wtA}
  The functions
 \begin{equation*}
   k^H = -\rchi(C_{H//\cat SGp}(A)) =  
    -\rchi(C_{\cat S{\cat OG{}(H)}{p+*}}(A)), \qquad
   k_K = -\rchi(C_{\cat SK{(1,K)}}(A))
 \end{equation*}
 are a \we\ for $C_{\cat SG{p}}(A)$ and a co\we\ for $C_{\cat
   SG{p+*}}(A)$ and the \Euc\ of $C_{\cat SG{p}}(A)$ is
\begin{equation*}
   \sum_{H \in C_{\catp SG{*+\mathrm{rad}}}(A)}  -\rchi(C_{\catp
    S{\cat OG{}(H)}*}(A)) =
   \chi(C_{\catp SG*}(A)) = 
  \sum_{K \in C_{\catp SG{*+\mathrm{eab}}}(A)} -\rchi(C_{\cat SK{(1,K)}}(A))
\end{equation*}
where the sum to the left runs over the nonidentity $G$-radical
$A$-normalized $p$-subgroups of $G$ and the sum to the right over the
nonidentity elementary abelian $A$-normalized $p$-subgroups of $G$.
\end{lemma}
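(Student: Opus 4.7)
The plan is to mirror the proof of the non-equivariant version (Proposition~\ref{prop:SGpwecowe}) step by step, replacing each relevant object by its $A$-centralizer subcategory and using $A$-equivariant naturality so that adjunctions and homotopy equivalences descend to fixed-point subcategories.

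First I would establish the general fact that the unique \we\ on a finite poset $\cat P{}{}$ is $k^H = -\rchi(H // \cat P{}{})$ by a \Mb-inversion computation: directly from $\sum_{H \leq K \leq L} \mu(K, L) = \delta_{H,L}$ one checks $\sum_{K \geq H} k^K = 1$, and then $\chi(H // \cat P{}{}) = \sum_{K > H} k^K = 1 - k^H$. Applied to $\cat P{}{} = C_{\cat SG{p}}(A)$, together with the tautology $H // C_{\cat SG{p}}(A) = C_{H // \cat SG{p}}(A)$, this produces the first formula for $k^H$. The second formula requires an $A$-equivariant version of the Quillen-type homotopy equivalence $H // \cat SG{p} \simeq \cat S{\cat OG{}(H)}{p+*}$, which I would factor as $H // \cat SG{p} \supseteq H // \cat S{N_G(H)}{p} \cong \cat S{N_G(H)/H}{p+*}$: the inclusion admits $K \mapsto N_K(H)$ as a deformation retract via the natural transformation $N_K(H) \leq K$ (using that normalizers grow in finite $p$-groups), and the second arrow is the isomorphism $K \mapsto K/H$. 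Both are $N_A(H)$-equivariant, so Proposition~\ref{prop:adjunction} gives Euler-characteristic-preserving adjunctions between the $A$-centralized subcategories, yielding the second formula. The co\we\ identity $k_K = -\rchi(C_{\cat SK{(1,K)}}(A))$ follows by the dual argument: the strict slice $K \backslash\backslash C_{\cat SG{p+*}}(A)$ is precisely the poset $C_{\cat SK{(1,K)}}(A)$ of $A$-normalized nontrivial proper subgroups of $K$.

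The Euler-characteristic formula is then obtained by summing. Leinster's identity gives $\chi(C_{\cat SG{p}}(A)) = \sum_H k^H$, and the left side equals $1$ because the trivial subgroup is an initial element of $C_{\cat SG{p}}(A)$. Isolating $k^1 = -\rchi(C_{\cat SG{p+*}}(A))$ yields $\sum_{H > 1} k^H = \chi(C_{\cat SG{p+*}}(A))$, and dually $\chi(C_{\cat SG{p+*}}(A)) = \sum_K k_K$. To restrict the ranges, I would invoke Lemma~\ref{lemma:OpHA} applied to the $A$-group $N_G(H)/H$ to conclude $k^H = 0$ whenever $H$ is not $G$-radical (since then $O_p(N_G(H)/H) \neq 1$). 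For the co\we, an equivariant Frattini argument will do: when $K$ is not elementary abelian, $\Phi(K)$ is a nontrivial proper characteristic and hence $A$-normalized subgroup of $K$, and the zigzag $L \leq L\Phi(K) \geq \Phi(K)$ provides an $A$-equivariant contraction of $C_{\cat SK{(1,K)}}(A)$; the only point to check is $L\Phi(K) < K$, which is the Burnside basis theorem.

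The main obstacle will be the $A$-equivariance in the Quillen reduction: every natural transformation and functor in the passage from $H // \cat SG{p}$ to $\cat S{N_G(H)/H}{p+*}$ must be verified to commute with the $A$-action, so that Proposition~\ref{prop:adjunction} can be applied and the equivariant adjunction descends correctly to the $A$-centralized level. Once this compatibility is in place, the argument is a direct equivariant translation of Proposition~\ref{prop:SGpwecowe}.
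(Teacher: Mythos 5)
Your proposal is correct and follows essentially the same route as the paper's proof. The paper invokes the adjunctions $H/\cat SG{p+*} \rightleftarrows \cat S{\cat OG{}(H)}p$ and $H//\cat SG{p+*} \rightleftarrows \cat S{\cat OG{}(H)}{p+*}$ from \cite[\S6]{gm:2012}, observes that they are $A$-adjunctions, and applies Proposition~\ref{prop:adjunction} to pass to centralizer posets; your deformation retraction $K \mapsto N_K(H)$ onto $H // \cat S{N_G(H)}{p}$, followed by the isomorphism $K \mapsto K/H$, is exactly the poset Galois connection underlying that adjunction, phrased more concretely. Your treatment of the co\we\ ($C_{\cat SG{p+*}}(A)//K = C_{\cat SK{(1,K)}}(A)$) and the restriction of the \we\ to $G$-radical subgroups via Lemma~\ref{lemma:OpHA} match the paper. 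The only divergence is in how the co\we\ is shown to be concentrated on the elementary abelian subgroups: the paper appeals to an equivariant version of \cite[Proposition~2.4]{kratzer_thevenaz84} via the \Mb\ function interpretation $k_K = \mu_A(1,K)$, whereas you give the $A$-equivariant Frattini zigzag $L \leq L\Phi(K) \geq \Phi(K)$ directly. The paper itself uses your Frattini argument in the proof of the neighbouring Proposition~\ref{prop:SGradeab}, so this is a choice of reference rather than a genuinely different proof. You honestly flag that the remaining work is to verify $A$-equivariance of the functors and natural transformations; this is exactly what the paper delegates to Proposition~\ref{prop:adjunction}, and the key check (that $K \mapsto N_K(H)$ commutes with the $A$-action, using that $H$ is $A$-normalized) is routine and goes through.
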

\begin{proof}
  The adjunctions of \cite[\S6]{gm:2012}, $KR=N_K(H)/H$ and $\bar KL =
  K$ where $\bar K = K/H$,
\begin{center}
\begin{tikzpicture}[>=stealth']
  \matrix (dia) [matrix of math nodes, column sep=25pt, row sep=20pt]{
  H/\cat SG{p+*} & \cat S{\cat OG{}(H)}p \\};
 \draw[->] (dia-1-1.10) -- (dia-1-1.10 -| dia-1-2.west)
 node[pos=.5,above] {$R$};  
 \draw[->] (dia-1-2.180) -- (dia-1-2.180 -| dia-1-1.east)
 node[pos=.5,below] {$L$}; 
 \end{tikzpicture} \qquad
\begin{tikzpicture}[>=stealth']
  \matrix (dia) [matrix of math nodes, column sep=25pt, row sep=20pt]{
  H//\cat SG{p+*} & \cat S{\cat OG{}(H)}{p+*} \\};
 \draw[->] (dia-1-1.10) -- (dia-1-1.10 -| dia-1-2.west)
 node[pos=.5,above] {$R$};  
 \draw[->] (dia-1-2.180) -- (dia-1-2.180 -| dia-1-1.east)
 node[pos=.5,below] {$L$}; 
 \end{tikzpicture}
\end{center}
are $A$-adjunctions (Definition~\ref{defn:Gadj}) inducing
adjunctions
\begin{center}
\begin{tikzpicture}[>=stealth']
  \matrix (dia) [matrix of math nodes, column sep=25pt, row sep=20pt]{
  H/C_{\cat SG{p+*}}(A) & C_{\cat S{\cat OG{}(H)}{p}}(A) \\};
 \draw[->] (dia-1-1.10) -- (dia-1-1.10 -| dia-1-2.west)
 node[pos=.5,above] {$R$};  
 \draw[->] (dia-1-2.180) -- (dia-1-2.180 -| dia-1-1.east)
 node[pos=.5,below] {$L$}; 
 \end{tikzpicture} \quad
\begin{tikzpicture}[>=stealth']
  \matrix (dia) [matrix of math nodes, column sep=25pt, row sep=20pt]{
  H//C_{\cat SG{p+*}}(A) & C_{\cat S{\cat OG{}(H)}{p+*}}(A) \\};
 \draw[->] (dia-1-1.10) -- (dia-1-1.10 -| dia-1-2.west)
 node[pos=.5,above] {$R$};  
 \draw[->] (dia-1-2.180) -- (dia-1-2.180 -| dia-1-1.east)
 node[pos=.5,below] {$L$}; 
 \end{tikzpicture}
\end{center} 
between centralizer posets (Proposition~\ref{prop:adjunction}).  In
the notation of \cite[Theorem 3.7]{gm:2012}, the \we\ for $C_{\cat
  SG{p+*}}(A)$ is $k^H = -\rchi(H//C_{\cat SG{p+*}}(A)) =
-\rchi(C_{\cat S{\cat OG{}(H)}{p+*}}(A))$.

Similarly, the co\we\ for $C_{\cat SG{p+*}}(A)$ is $k_K =
-\rchi(C_{\cat SG{p+*}}(A))//K)$.  Since $\cat SG{p+*}//K = \cat
SK{(1,K)}$ it is clear that $C_{\cat SG{p+*}}(A))//K = C_{\cat
  SG{p+*}//K}(A) = C_{\cat SK{(1,K)}}(A)$.

The \Euc\ of $C_{\cat SG{p+*}}(A)$ is the sum of the values of the
\we\ (co\we). By Lemma~\ref{lemma:OpHA}, the \we\ of
Lemma~\ref{lemma:wtA} vanishes off the $G$-radical $p$-subgroups.

  The argument at the very beginning of this section shows that $k_K =
  \mu_A(1,K)$ where $\mu_A$ stands for the \Mb\ function of the poset
  $C_{\cat SG{p+*}}(A)$.  An equivariant version of the argument from
  \cite[Proposition 2.4]{kratzer_thevenaz84} shows that the \Mb\
  function $k_K=\mu_A(1,K)$ for $C_{\cat SG{p+*}}(A)$ is nonzero
  exactly when $K$ is a nonidentity elementary abelian $p$-subgroup.
\end{proof}


Let $\catp SG{*+\mathrm{rad}}$ ($\catp SG{*+\mathrm{eab}}$) be the
subposet of nontrivial $G$-radical $p$-subgroups (nontrivial
elementary abelian $p$-subgroups).  Lemma~\ref{lemma:wtA} shows that
the centralizer posets $C_{\catp SG{*+\mathrm{rad}}}(A)$ and $C_{\cat
  SG{*+\mathrm{eab}}}(A)$ have the same \Euc\ as $C_{\catp SG*}(A)$.
They are, in fact, homotopy equivalent:

\begin{prop}\label{prop:SGradeab}
  The inclusions
    \begin{tikzpicture}[>=stealth', baseline=(current bounding box.+0)]
  \matrix (dia) [matrix of math nodes, column sep=15pt, row sep=0pt]{
  C_{\cat SG{p+*+\mathrm{eab}}}(A) & C_{\cat SG{p+*}}(A) &
  C_{\cat SG{p+*+\mathrm{rad}}}(A)  \\};
  \draw[right hook->] 
  ($(dia-1-1.east)+(0,0.1)$) -- ($(dia-1-2.west)+(0,0.1)$); 
  \draw[left hook->] ($(dia-1-3.west)+(0,0.1)$) -- ($(dia-1-2.east)+(0,0.1)$);
 \end{tikzpicture}
  are homotopy equivalences.
\end{prop}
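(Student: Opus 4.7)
The plan is to verify both equivalences by the standard criteria for a subposet inclusion to induce a homotopy equivalence: Quillen's Theorem A for the elementary abelian direction, and the strict-coslice (Bouc-style) criterion already used in the proof of Proposition~\ref{prop:weSGprad} for the radical direction. Neither criterion sees the $A$-action directly; it sees only the underlying finite poset. All the group-theoretic constructions we need (centers, products, characteristic subgroups, quotients by normal subgroups) preserve $A$-normalization, so the classical Quillen and Bouc arguments transfer verbatim after replacing every poset by its $A$-centralized subposet.

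For the inclusion $C_{\cat SG{p+*+\mathrm{eab}}}(A) \hookrightarrow C_{\cat SG{p+*}}(A)$, I would apply Quillen's Theorem A via the over-fiber above each $H \in C_{\cat SG{p+*}}(A)$, which is the poset $C_{\cat SH{p+*+\mathrm{eab}}}(A)$ of nonidentity $A$-stable elementary abelian subgroups of $H$. Since $\Omega_1(Z(H))$ is characteristic in $H$ it is $A$-stable, nontrivial, and elementary abelian. For any $K$ in the fiber, the product $K\cdot\Omega_1(Z(H))$ is again $A$-stable and elementary abelian (the second factor is central in $H$, and all generators have order $p$), so the zigzag
\[
K \;\leq\; K\cdot\Omega_1(Z(H)) \;\geq\; \Omega_1(Z(H))
\]
lies inside the fiber and gives a conical contraction onto the point $\Omega_1(Z(H))$. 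This makes each fiber contractible.

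For the inclusion $C_{\cat SG{p+*+\mathrm{rad}}}(A) \hookrightarrow C_{\cat SG{p+*}}(A)$ I would follow the argument sketched in the proof of Proposition~\ref{prop:weSGprad}: it is enough to show that for every $J \in C_{\cat SG{p+*}}(A)$ which is not $G$-radical, the strict coslice $J // C_{\cat SG{p+*}}(A) = \{K \in C_{\cat SG{p+*}}(A) \mid K > J\}$ is contractible. Since $J$ is $A$-stable the normalizer $N_G(J)$ is $A$-stable, so $A$ acts on the quotient $N_G(J)/J$, and the standard bijection between $p$-subgroups of $G$ strictly containing $J$ and nontrivial $p$-subgroups of $N_G(J)/J$ restricts to an isomorphism
\[
J // C_{\cat SG{p+*}}(A) \;\cong\; C_{\cat S{N_G(J)/J}{p+*}}(A).
\]
When $J$ is not $G$-radical, $O_p(N_G(J)/J) = O_p(N_G(J))/J$ is nontrivial, and Lemma~\ref{lemma:OpHA} applied to the $A$-group $N_G(J)/J$ gives contractibility.

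The main obstacle is to be confident that the strict-coslice criterion invoked in the non-equivariant Bouc argument really does lift to the $A$-centralized setting. This should be immediate because that criterion is a formal statement about finite posets, and the $A$-action has already been absorbed at the object level by passing to $C_{\cat SG{p+*}}(A)$; the remaining ingredients (the identification of the strict coslice with a smaller Brown-type poset and the contractibility of the latter) are exactly the equivariant analogues provided by Lemma~\ref{lemma:OpHA}.
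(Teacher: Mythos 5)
Your two directions are handled by different arguments, and it is worth separating them.

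For the radical inclusion your route coincides with the paper's: show that $J//C_{\cat SG{p+*}}(A)$ is contractible whenever $J$ is not $G$-radical, using Lemma~\ref{lemma:OpHA} applied to the $A$-group $N_G(J)/J$, and then invoke the Bouc removal criterion. There is, however, one genuine imprecision. You claim an ``isomorphism'' $J//C_{\cat SG{p+*}}(A) \cong C_{\cat S{N_G(J)/J}{p+*}}(A)$ coming from a ``bijection between $p$-subgroups of $G$ strictly containing $J$ and nontrivial $p$-subgroups of $N_G(J)/J$.'' No such bijection exists: a $p$-subgroup $K>J$ need not normalize $J$, so the passage $K \mapsto N_K(J)/J$ is not injective and has no inverse (the lattice isomorphism only matches subgroups \emph{between $J$ and $N_G(J)$} with subgroups of $N_G(J)/J$). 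What is available is an \emph{adjunction} $K \mapsto N_K(J)/J$, $\bar K \mapsto$ its preimage, exactly the $A$-adjunction used in Lemma~\ref{lemma:wtA} of the paper, and adjoint finite posets are homotopy equivalent, so your conclusion (contractibility transfers from $C_{\cat S{N_G(J)/J}{p+*}}(A)$) survives once the word ``isomorphism'' is replaced by ``adjunction.''

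For the elementary abelian inclusion you take a genuinely different and slightly more classical route than the paper. The paper shows that for each non-elementary-abelian $K$ the strict slice $C_{\cat SG{p+*}}(A)//K=C_{\cat SK{(1,K)}}(A)$ is conically contractible via $H \leq H\Phi(K) \geq \Phi(K)$ (the Frattini subgroup is characteristic, hence $A$-normalized, and nontrivial for $K$ not elementary abelian), again feeding into a Bouc-style one-at-a-time removal. You instead apply Quillen's Theorem~A directly to the inclusion, contracting the fiber $C_{\cat SH{p+*+\mathrm{eab}}}(A)$ over \emph{every} $H$ via the zigzag $K \leq K\cdot\Omega_1(Z(H)) \geq \Omega_1(Z(H))$. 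This is the equivariant version of Quillen's original argument that $\mathcal{A}_p(G)\hookrightarrow\mathcal{S}_p(G)$ is a homotopy equivalence, and your checks that $\Omega_1(Z(H))$ and $K\cdot\Omega_1(Z(H))$ are $A$-normalized, nontrivial and elementary abelian are all correct. The trade-off is essentially cosmetic: the paper's version dovetails with its weighting/coweighting machinery (since the Frattini computation also identifies where the coweighting is supported), whereas yours is the more self-contained topological argument. Both are valid; your proof would be accepted after replacing ``isomorphism'' with ``adjunction'' in the radical step.
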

\begin{proof}
  This proof is similar to that of \cite[Theorem 6.1]{gm:2012} and
  goes back to Bouc \cite{bouc84a} and Quillen's Theorem A
  \cite[Theorem A]{quillen73}. It suffices to show that only the
  elementary abelian or the $G$-radical $p$-subgroups contribute to
  the homotopy type of $C_{\cat SG{p+*}}(A)$. More precisely, it is
  enough to show that
  \begin{align*}
    &\text{$K$ is not an elementary abelian $p$-subgroup of $G$} \implies 
    \text{$C_{\cat SG{p+*}}(A)//K$ is
      contractible} \\ 
    &\text{$H$ is not a $G$-radical $p$-subgroup of $G$} \implies 
    \text{$H//C_{\cat SG{p+*}}(A)$ is
      contractible} 
  \end{align*}
  when $H$ and $K$ are nonidentity $A$-normalized subgroups of $G$.
  In the first case, we use that $C_{\cat SG{p+*}}(A)//K= C_{\cat
    SK{(1,K)}}(A)$.  When $K$ is not elementary abelian, the Frattini
  subgroup $\Phi(K)$ is a nontrivial $A$-normalized subgroup of $K$
  and $H \leq H\Phi(K) \geq \Phi(K)$ defines a retraction of $C_{\cat
    SK{(1,K)}}(A)$. In the second case, we use the adjunction between
  $H//C_{\cat SG{p+*}}(A)$ and $C_{\cat S{\cat OG{}(H)}{p+*}}(A)$ and
  recall that the latter poset in contractible when $H$ is not
  $G$-radical by Lemma~\ref{lemma:OpHA}.
\end{proof}


\begin{prop}\label{prop:weCSGpradA}
  The \we\ $k^H \colon C_{\cat SGp}(A) \to \Z$ for $C_{\cat SGp}(A)$
  is given by
  \begin{equation*}
    k^H =
    \begin{cases}
      k^{[H]} & \text{$K$ is $G$-radical} \\ 0 & \text{otherwise}
    \end{cases}
  \end{equation*}
  where $k^{[H]} \colon  C_{\cat SG{p+\mathrm{rad}}}(A)/N_G(A) \to \Z$
  is the \we\ for $C_{\cat SG{p+\mathrm{rad}}}(A)/N_G(A)$.
  The co\we\ $k_K \colon C_{\cat SG{p+*}}(A) \to \Z$ for $C_{\cat
    SG{p+*}}(A)$ is given by
  \begin{equation*}
    k_K =
    \begin{cases}
      k_{[K]} & \text{$K$ is elementary abelian} \\
      0 & \text{otherwise}
    \end{cases}
  \end{equation*}
  where $k_{[K]} \colon C_{\cat SG{p+*+\mathrm{eab}}}(A)/N_G(A) \to
  \Z$ is the co\we\ for $C_{\cat SG{p+*+\mathrm{eab}}}(A)/N_G(A)$.
\end{prop}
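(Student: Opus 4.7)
The strategy is to mirror the proof of Proposition~\ref{prop:weSGprad}, substituting each non-equivariant ingredient by its $A$-equivariant counterpart. The key inputs are Lemma~\ref{lemma:wtA} (closed formulas for the \we\ and co\we\ on the centralized posets), Lemma~\ref{lemma:OpHA} (equivariant contractibility in the presence of a nontrivial $O_p$), Proposition~\ref{prop:SGradeab} (the equivariant Bouc-type homotopy equivalences between the full, elementary abelian, and $G$-radical versions), and Corollary~\ref{cor:uweSA} (descent of weightings to orbit posets of group actions).

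For the \we, Lemma~\ref{lemma:wtA} supplies $k^H = -\rchi(C_{\cat S{\cat OG{}(H)}{p+*}}(A))$ for $H \in C_{\cat SGp}(A)$. If $H$ is not $G$-radical, then $O_p(\cat OG{}(H))$ is a nontrivial characteristic $p$-subgroup of $\cat OG{}(H) = N_G(H)/H$; moreover $A$ acts on $\cat OG{}(H)$ through its action on $G$ (well-defined because $H$ is $A$-normalized), and this characteristic subgroup is $A$-stable. Applying Lemma~\ref{lemma:OpHA} inside $\cat OG{}(H)$ gives $C_{\cat S{\cat OG{}(H)}{p+*}}(A) \simeq \ast$ and hence $k^H = 0$. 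When $H$ \emph{is} $G$-radical, I show the value at $H$ coincides with the \we\ on the subposet $C_{\cat SG{p+\mathrm{rad}}}(A)$. For this I apply Proposition~\ref{prop:SGradeab} to the $A$-group $\cat OG{}(H)$, using the standard observation that a $p$-subgroup $K$ with $H \leq K$ is $G$-radical precisely when $K/H$ is $\cat OG{}(H)$-radical. Transported through the adjunction between $H // C_{\cat SGp}(A)$ and $C_{\cat S{\cat OG{}(H)}{p+*}}(A)$ used in the proof of Lemma~\ref{lemma:wtA}, this yields the homotopy equivalence $H // C_{\cat SG{p+\mathrm{rad}}}(A) \simeq H // C_{\cat SGp}(A)$ and hence the desired equality of \we s.

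The centralized poset $C_{\cat SG{p+\mathrm{rad}}}(A)$ is an $N_G(A)$-poset and its unique \we\ is therefore $N_G(A)$-invariant by Lemma~\ref{lemma:Ainvmu}. Corollary~\ref{cor:uweSA}, applied with $N_G(A)$ in place of the acting group $A$, then produces the unique induced \we\ $k^{[H]}$ on the orbit poset $C_{\cat SG{p+\mathrm{rad}}}(A)/N_G(A)$, and by construction it agrees with the restriction of $k^\bullet$ from $C_{\cat SGp}(A)$.

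The co\we\ case is dual. Lemma~\ref{lemma:wtA} gives $k_K = -\rchi(C_{\cat SK{(1,K)}}(A))$. If $K$ is not elementary abelian, $\Phi(K)$ is a nontrivial $A$-stable characteristic subgroup of $K$, so the Frattini retraction $J \leq J\Phi(K) \geq \Phi(K)$ used in the proof of Proposition~\ref{prop:SGradeab} contracts $C_{\cat SK{(1,K)}}(A)$ and forces $k_K = 0$. Descent of the $N_G(A)$-invariant co\we\ via the co\we\ half of Corollary~\ref{cor:uweSA} delivers the unique co\we\ $k_{[K]}$ on $C_{\cat SG{p+*+\mathrm{eab}}}(A)/N_G(A)$. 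The main technical obstacle is the equivariant Bouc step in the \we\ case: verifying that Proposition~\ref{prop:SGradeab} applied to the quotient $\cat OG{}(H)$ really transfers to a homotopy equivalence of the strict coslices $H // C_{\cat SG?p}(A)$, which rests on the compatibility between $G$-radicality and $\cat OG{}(H)$-radicality for subgroups containing a fixed $G$-radical $H$.
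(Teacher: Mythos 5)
The paper omits the proof, saying only that it is "similar to that of Proposition~\ref{prop:weSGprad} with Lemma~\ref{lemma:OpHA} replacing Lemma~\ref{lemma:quillenOpG}". You have the right ingredients for the vanishing statements and for the descent to $N_G(A)$-orbits via Corollary~\ref{cor:uweSA}, but the way you establish that the \we\ for $C_{\cat SGp}(A)$ restricts to the \we\ for $C_{\cat SG{p+\mathrm{rad}}}(A)$ departs from the paper's blueprint and introduces a gap.

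For a $G$-radical $H$, the proof of Proposition~\ref{prop:weSGprad} shows that the inclusion $H // \cat SG{p+\mathrm{rad}} \hookrightarrow H // \cat SGp$ is a homotopy equivalence by directly verifying the hypothesis of Bouc's theorem: for any $J \in H // \cat SGp$, the coslice $J // (H // \cat SGp)$ equals $J // \cat SGp$, which is homotopy equivalent to $\cat S{\cat OG{}(J)}{p+*}$ by the adjunction, and hence contractible unless $J$ is $G$-radical by Lemma~\ref{lemma:quillenOpG}. The equivariant analogue, which is what the paper intends, is to argue identically that for $J \in H // C_{\cat SGp}(A)$, the coslice $J // (H // C_{\cat SGp}(A))$ is $J // C_{\cat SGp}(A) \simeq C_{\cat S{\cat OG{}(J)}{p+*}}(A)$ (the adjunction of Lemma~\ref{lemma:wtA}), and this is contractible unless $J$ is $G$-radical by Lemma~\ref{lemma:OpHA}. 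That gives $H // C_{\cat SG{p+\mathrm{rad}}}(A) \hookrightarrow H // C_{\cat SGp}(A)$ as a homotopy equivalence directly, with no appeal to a correspondence between $G$-radicality and $\cat OG{}(H)$-radicality.

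Your route instead applies Proposition~\ref{prop:SGradeab} inside the quotient $\cat OG{}(H)$ and then tries to transport back along the adjunction, which requires (and you explicitly flag as "the main technical obstacle") the claim that $K$ is $G$-radical if and only if the corresponding subgroup of $\cat OG{}(H)$ is $\cat OG{}(H)$-radical. This is not a "standard observation" and is not justified in the proposal. Unwinding the definitions: for $K \geq H$ with $K \leq N_G(H)$ both $G$-radical, radicality of $K/H$ in $N_G(H)/H$ asks that $O_p\bigl((N_G(K) \cap N_G(H))/H\bigr) = K/H$, whereas $G$-radicality of $K$ gives $O_p(N_G(K)) = K$; since $N_G(K) \cap N_G(H)$ may be a proper subgroup of $N_G(K)$, the $p$-core of the intersection can a priori be larger, and neither implication is immediate. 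Even granting the claim, you would also need the adjunction $(L,R)$ of Lemma~\ref{lemma:wtA} to restrict to the radical subposets — and $R$ sends a general $K > H$ (which need not normalize $H$) to $N_K(H)/H$, not $K/H$, so the correspondence is not the naive one. The direct Bouc argument above bypasses all of this. The co\we\ half of your argument is fine, though you should also record that $C_{\cat SG{p+*+\mathrm{eab}}}(A)$ is a right ideal in $C_{\cat SG{p+*}}(A)$, so that the co\we\ literally restricts, as in the last line of the proof of Proposition~\ref{prop:weSGprad}.
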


We omit the proof of Proposition~\ref{prop:weCSGpradA} which is similar
to that of Proposition~\ref{prop:weSGprad} with Lemma~\ref{lemma:OpHA}
replacing Lemma~\ref{lemma:quillenOpG}.

\begin{proof}[Proof of Theorem~\ref{thm:global}.\eqref{thm:global0}]
  Let $K$ be an $A$-normalized $p$-subgroup of $G$.  The \we\ for
  $C_{\cat SG{p}}(A)$ (Lemma~\ref{lemma:wtA}) restricts to the \we\
  for the contractible left ideal $C_{K// \cat SG{p}}(A)$
  (\S\ref{sec:computechi}) of $A$-normalized $p$-subgroups containing
  $K$.  This means that
  \begin{equation*}
    1 = \chi(C_{K // \cat SG{p}}(A)) = \sum_{H \geq K}k^H =
    \sum_{H \geq K} -\rchi(C_{\cat S{\cat OG{}(H)}{p+*}}(A))
  \end{equation*}
  generalizing the nonequivariant Equation~\eqref{eq:affine}. 
\end{proof}

The next lemma applies in the special case where $A$ is a subgroup of
$G$ acting on $\cat SG{}$ by conjugation.

\begin{lemma}\cite[\S4]{webb87}\cite[Lemma 2.1.2]{webb91}\label{lemma:OpH}
  If $A$ is a subgroup of $G$,
  \begin{equation*}
    O_p(A) \neq 1 \implies C_{\catp SG*}(A) \simeq \ast
  \implies \chi(C_{\catp SG*}(A)) =1
  \end{equation*}
\end{lemma}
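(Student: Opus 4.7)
The plan is to mimic the proof of Lemma~\ref{lemma:OpHA} verbatim, except that we replace the characteristic $p$-subgroup $O_p(G)$ of $G$ by the characteristic $p$-subgroup $P = O_p(A)$ of $A$. The only thing to verify is that $P$ plays the same structural role: it must be a nonidentity $A$-normalized $p$-subgroup of $G$, and joining with it must preserve $A$-normalization and the $p$-group property.

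First I would record that $P = O_p(A)$ is a nontrivial $p$-subgroup of $A$, hence of $G$, and that $P$ is $A$-normalized because it is characteristic in $A$. Thus $P \in C_{\cat SG{p+*}}(A)$. Next, for any $H \in C_{\cat SG{p+*}}(A)$ one has $A \leq N_G(H)$ and in particular $P \leq N_G(H)$, so the product $HP$ is a $p$-subgroup of $G$. Since $A$ normalizes both $H$ (by hypothesis on $H$) and $P$ (as above), $A$ normalizes $HP$, so $HP \in C_{\cat SG{p+*}}(A)$ as well. The assignment $H \mapsto HP$ is evidently a poset endofunctor of $C_{\cat SG{p+*}}(A)$, and the inclusions
\begin{equation*}
  H \leq HP \geq P
\end{equation*}
are natural transformations linking the identity endofunctor to the constant endofunctor with value $P$. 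This forces $C_{\cat SG{p+*}}(A)$ to be contractible, and in particular its \Euc\ equals $1$.

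The only point that required thought is the verification that $P$ is $A$-normalized; once that is in place the proof is a word-for-word transcription of Lemma~\ref{lemma:OpHA}. There is no real obstacle here, only the bookkeeping observation that $O_p(A)$ is characteristic in $A$ and that the join of two $A$-normalized $p$-subgroups (one of which normalizes the other) is again an $A$-normalized $p$-subgroup.
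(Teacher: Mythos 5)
Your proof is correct and is essentially the paper's own argument, spelled out in more detail: both use the natural transformations $H \leq H\,O_p(A) \geq O_p(A)$ to contract $C_{\catp SG*}(A)$, with the key check being that $O_p(A)$ is a nonidentity $A$-normalized $p$-subgroup and that $H\,O_p(A)$ remains in the poset. The extra verifications you supply (that $O_p(A)$ is characteristic in $A$, that $A \leq N_G(H)$ makes $HO_p(A)$ a $p$-subgroup, and that it is $A$-normalized) are exactly what the paper leaves implicit.
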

\begin{proof}
  The natural transformations $K \leq K O_p(A) \geq O_p(A)$ link the
  identity endofunctor to a constant endofunctor of $C_{\catp
    SG*}(A)$. The product $K O_p(A)$ is a subgroup as $A$ normalizes
  $K$. 
\end{proof}

\begin{cor}\label{cor:OpH}
  Consider the Brown poset $\cat SG{p+*}$ as a $G$-poset with the
  conjugation action.
  \begin{enumerate}
  \item \label{cor:OpH1} The $r$th, $r \geq 1$,
  reduced equivariant  \Euc\ is a weighted sum of reduced \Euc s
  \begin{equation*}
    \rchi_r(\cat SG{p+*},G) = 
    \frac{1}{|G|}\sum_{A \in \cat SG{p'+\mathrm{abe}}}
    \rchi((C_{\cat SG{p+*}}(A)))\varphi_r(A) =
    \sum_{[A] \in \cat SG{p'+\mathrm{abe}}/G}
    \rchi((C_{\cat SG{p+*}}(A)))\frac{\varphi_r(A)}{|N_G(A)|}
  \end{equation*}
  with contributions only from the $p$-regular abelian subgroups $A$ of
  $G$.
\item \label{cor:OpH2} The $r$th, $r \geq 2$, reduced equivariant
  Euler class function $\ralpha_r(\cat SG{p+*},G)$
  (Definition~\ref{defn:classfct}) vanishes off the $p$-regular
  classes.
\item \label{cor:OpH3} The $r$th, $r \geq 2$, reduced equivariant
  Euler class function has the form
  \begin{equation*}
    \ralpha_r(\cat SG{p+*},G) =
    \sum_{[C] \in \cat SG{p'+\mathrm{cyc}}/G}
    \frac{\wa_r(\cat SG{p+*},G)([C])}{|N_G(C):C|} 1_C^G  
  \end{equation*}
  where the sum runs over the set of conjugacy classes $[C]$ of
  $p$-regular cyclic subgroups and where the uniquely determined {\em
    Artin coefficients\/} $\wa_r(\cat SG{p+*},G)([C])$ are integers.
\item  \label{cor:OpH4} When $r=2$ the Artin coefficients satisfy the
  equation 
  \begin{equation*}
    0=\sum_{[C] \in [\cat SG{p'+\mathrm{cyc}}]}
 \wa_2(\cat SG{p+*},G)(C) \frac{1}{|N_G(C)|}
  \end{equation*}
   for all prime divisors $p$ of $|G|$.
  \end{enumerate}
\end{cor}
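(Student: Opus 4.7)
The plan is to chain together the preceding structural results: Proposition~\ref{prop:altequivchi}, Lemma~\ref{lemma:OpH}, Corollary~\ref{cor:xXCG}, Artin's induction theorem, and Webb's evaluation of $\chi_1$ from Remark~\ref{exmp:3conjs}.\eqref{list:brown2}.

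For \eqref{cor:OpH1}, apply the last line of Proposition~\ref{prop:altequivchi} to $(\cat C{}{},A) = (\cat SG{p+*},G)$ in its reduced form, so that $\rchi_r(\cat SG{p+*},G)$ is expressed as a sum over abelian subgroups $A \leq G$ weighted by $\varphi_r(A)$. Lemma~\ref{lemma:OpH} kills every term where $A$ is not $p$-regular, because an abelian group with $O_p(A)\neq 1$ is precisely one containing a nontrivial $p$-subgroup, and then $C_{\cat SG{p+*}}(A)\simeq \ast$, so $\rchi(C_{\cat SG{p+*}}(A))=0$. What remains is the stated sum over $p$-regular abelian $A$.

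For \eqref{cor:OpH2}, use the definition $\ralpha_r(\cat SG{p+*},G)(x)=\rchi_{r-1}(C_{\cat SG{p+*}}(x),C_G(x))$ and apply \eqref{cor:OpH1} to the pair $(C_{\cat SG{p+*}}(x),C_G(x))$. The abelian subgroups $B$ of $C_G(x)$ that appear give centralizer subposets $C_{C_{\cat SG{p+*}}(x)}(B)=C_{\cat SG{p+*}}(\langle x,B\rangle)$, where $\langle x,B\rangle$ is abelian and contains $x$. If $x$ is $p$-irregular then $\langle x\rangle$, and hence $\langle x,B\rangle$, has nontrivial $p$-part, so Lemma~\ref{lemma:OpH} again gives contractibility and each term vanishes; summed up, $\ralpha_r(\cat SG{p+*},G)(x)=0$.

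For \eqref{cor:OpH3}, by Corollary~\ref{cor:xXCG} the class function $\ralpha_r(\cat SG{p+*},G)$ is integer-valued, and since both $C_{\cat SG{p+*}}(x)$ and $C_G(x)$ depend only on $\langle x\rangle$, the function lies in $\Q\otimes R_\Q(G)$ (cf.\ Serre \cite[13.1]{serre77}). The permutation characters $\{1_C^G\}$ indexed by conjugacy classes of cyclic subgroups form a $\Q$-basis of this space, so $\ralpha_r = \sum_{[C]} a_{[C]}\,1_C^G$ with uniquely determined rational $a_{[C]}$. Part \eqref{cor:OpH2} combined with the fact that the analogous sub-basis indexed by $p$-regular cyclic subgroups spans the subspace of rational virtual characters vanishing on $p$-irregular classes (a dimension count) forces $a_{[C]}=0$ for non-$p$-regular $C$. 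To obtain the integrality of $\wa_r([C]):=|N_G(C){:}C|\,a_{[C]}$, invoke the explicit form of Artin's induction theorem: M\"obius inversion on the poset of cyclic subgroups of $C$ (whose M\"obius function takes values in $\Z$) expresses $\wa_r([C])$ as a $\Z$-linear combination of the integer values $\ralpha_r(c_D)$ for $D\leq C$, and hence $\wa_r([C])\in\Z$.

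For \eqref{cor:OpH4}, evaluate the formula from \eqref{cor:OpH3} at the identity: $1_C^G(1)=|G{:}C|$, so
\begin{equation*}
  \ralpha_2(\cat SG{p+*},G)(1) = \sum_{[C]\in[\cat SG{p'+\mathrm{cyc}}]} \frac{\wa_2(\cat SG{p+*},G)([C])}{|N_G(C){:}C|}\,|G{:}C|
  = |G|\sum_{[C]\in[\cat SG{p'+\mathrm{cyc}}]}\frac{\wa_2(\cat SG{p+*},G)([C])}{|N_G(C)|}.
\end{equation*}
The left-hand side is $\rchi_1(\cat SG{p+*},G)$, which equals $0$ whenever $p\mid|G|$ by Webb's theorem (Remark~\ref{exmp:3conjs}.\eqref{list:brown2}). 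Dividing by $|G|$ gives the desired identity. The main obstacle in the whole argument is \eqref{cor:OpH3}: the $\Q$-expansion is immediate from Artin, but the integrality of $\wa_r([C])$ really requires the M\"obius-inversion form of Artin's theorem rather than its bare existence statement.
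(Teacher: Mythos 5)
Your proof is correct and follows essentially the same route as the paper's: Proposition~\ref{prop:altequivchi} plus Lemma~\ref{lemma:OpH} for parts \eqref{cor:OpH1}--\eqref{cor:OpH2}, Artin induction plus the basis property of $\{1_C^G\}$ for part \eqref{cor:OpH3}, and evaluation at the identity plus Webb's theorem for part \eqref{cor:OpH4}. The paper handles the integrality and the restriction to $p$-regular cyclic subgroups in \eqref{cor:OpH3} in one step by invoking the constructive proof of Artin's induction theorem \cite[Theorem 5.21]{isaacs}, which produces integer coefficients supported on cyclic subgroups generated by elements in the support of the class function; your version splits this into an existence-of-a-$\Q$-expansion step, a dimension-count step to kill the coefficients at non-$p$-regular cyclic subgroups, and a separate M\"obius-inversion step for integrality, but these are exactly the internal steps of the constructive proof, so there is no substantive difference.
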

\begin{proof}
  \noindent \eqref{cor:OpH1}
  Use two of the reformulations of the equivariant \Euc\ from
  Proposition~\ref{prop:altequivchi}.
  Lemma~\ref{lemma:OpH} implies that only the $p$-regular abelian
  subgroups of $G$ contribute to the sum.

  \noindent \eqref{cor:OpH2}
  The $r$th equivariant reduced class function $\ralpha_r(\cat
  SG{p+*},G)$  takes $x \in G$ to
  \begin{equation*}
    \rchi_{r-1}(C_{\cat SG{p+*}}(x),C_G(x)) = 
    \frac{1}{|C_G(x)|} \sum_{y \in C_{r-1}(C_G(x))} \rchi(C_{\cat
      SG{p+*}}(\gen{x,y})) 
  \end{equation*}
  If $p$ divides the order of $x$, $O_p(\gen{x,y})$ is nontrivial and
  $\rchi(C_{\cat SG{p+*}}(\gen{x,y})) =0$ for all $y$ by
  Lemma~\ref{lemma:OpH}.

  \noindent \eqref{cor:OpH3} Since the value of $\ralpha_r(\cat
  SG{p+*},G)$ at $x \in G$ only depends on the subgroup $\gen x$
  generated by $x$ (Definition~\ref{defn:classfct}), the proof of
  Artin's induction theorem \cite[Theorem 5.21]{isaacs} shows that
  $\ralpha_r(\cat SG{p+*},G)$ can be decomposed as claimed.  Since
  $\ralpha_r(\cat SG{p+*},G)$ is supported on the $p$-regular classes,
  as we saw in item \eqref{cor:OpH2}, only $p$-regular cyclic
  subgroups are used in this decomposition. The coefficients are
  uniquely determined since the class functions $1_C^G$ as $C$ runs
  through the set of conjugacy classes of cyclic subgroups of $G$ are
  a basis for the vector space $\Q \otimes R_{\Q}(G)$ of $\Q$-linear
  combinations of rational characters \cite[\S13.1]{serre77}.

  \noindent \eqref{cor:OpH4} The virtual degree of the right hand side
  of the equation in item \eqref{cor:OpH3} is
  \begin{equation*}
    \sum_{[C] \in \cat SG{p'+\mathrm{cyc}}/G}
  \frac{\wa_2(\cat SG{p+*},G)(C)}{|N_G(C):C|} |G : C| =
 |G|\sum_{[C] \in \cat SG{p'+\mathrm{cyc}}/G}
 \wa_2(\cat SG{p+*},G)(C)\frac{1}{|N_G(C)|}
  \end{equation*}
  and on the left side it is $\ralpha_2(\cat SG{p+*},G)(1) =
  \rchi_1(\cat SG{p+*},G) = 0$ by Webb's theorem \cite[Proposition
  8.2.(i)]{webb87}.
\end{proof}

The {\em Artin coefficients\/} $\wa_r(\cat SG{p+*},G)(C)$ of
Corollary~\ref{cor:OpH}.\eqref{cor:OpH3} determine the $r$th
equivariant reduced \Euc\ as
\begin{multline}\label{eq:artincoeff}
  \rchi_r(\cat SG{p+*},G) = \gen{\ralpha_r(\cat SG{p+*},G),|C_G|}_G
   =
   \sum_{[C] \in \cat SG{p'+\mathrm{cyc}}/G} \wa_r(\cat SG{p+*},G)(C)
    \sum_{x \in C} \frac{|C_G(x)|}{|N_G(C)|} \\ =
    \sum_{[C] \in \cat SG{p'+\mathrm{cyc}}/G} \wa_r(\cat SG{p+*},G)(C)
    \sum_{\substack{x \in C \\ x \neq 1}} \frac{|C_G(x)|}{|N_G(C)|}
\end{multline}
because $\gen{1_C^G,|C_G|}_G = \gen{1_C,|C_G|}_C = \sum_{x \in C}
|C_G(x):C|$ by Frobenius reciprocity. Use
Corollary~\ref{cor:OpH}.\eqref{cor:OpH4} to get the final equality.
It is perhaps worth noting that the factor $\sum_{x \in
  C}|C_G(x)|/|N_G(C)|$ from Equation~\eqref{eq:artincoeff} is a
natural number.
\begin{prop}\label{prop:CGx/NGH}
  $|N_G(H)| \mid \sum_{x \in H} |C_G(x)|$ for any subgroup $H \leq G$.
\end{prop}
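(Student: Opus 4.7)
The plan is to partition $H$ into $N_G(H)$-conjugacy classes and apply the orbit--stabilizer theorem. First, I would observe that $N_G(H)$ acts on $H$ by conjugation, and moreover that the function $x \mapsto |C_G(x)|$ is constant on each such $N_G(H)$-orbit in $H$. Indeed, if $x$ and $x'=nxn^{-1}$ are in the same $N_G(H)$-orbit with $n \in N_G(H) \leq G$, then in particular they are $G$-conjugate, so $C_G(x')=nC_G(x)n^{-1}$ has the same order as $C_G(x)$.

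Next, let $\mathcal{O}$ be the set of $N_G(H)$-orbits in $H$, choose for each $O \in \mathcal{O}$ a representative $x_O \in O$, and use that the stabilizer of $x_O$ under the $N_G(H)$-action is
\begin{equation*}
C_{N_G(H)}(x_O) \;=\; N_G(H) \cap C_G(x_O),
\end{equation*}
so that by orbit--stabilizer $|O| = |N_G(H)|/|N_G(H)\cap C_G(x_O)|$. Grouping the sum over $H$ by orbits then gives
\begin{equation*}
  \sum_{x \in H} |C_G(x)|
  \;=\; \sum_{O \in \mathcal{O}} |O|\,|C_G(x_O)|
  \;=\; |N_G(H)| \sum_{O \in \mathcal{O}} \frac{|C_G(x_O)|}{|N_G(H)\cap C_G(x_O)|}
  \;=\; |N_G(H)| \sum_{O \in \mathcal{O}} [\,C_G(x_O) : N_G(H)\cap C_G(x_O)\,].
\end{equation*}

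Since each index $[\,C_G(x_O) : N_G(H)\cap C_G(x_O)\,]$ is a positive integer (the centralizer $N_G(H)\cap C_G(x_O)$ is a subgroup of $C_G(x_O)$), the right-hand side is manifestly an integer multiple of $|N_G(H)|$, yielding the divisibility claim. There is no genuine obstacle: the only conceptual point to get right is the choice of acting group --- $N_G(H)$ rather than $H$ or $G$ --- so that both $H$ is preserved and the $G$-centralizer orders are orbit-constant; once that is fixed, orbit--stabilizer delivers the factor of $|N_G(H)|$ directly.
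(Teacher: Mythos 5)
Your proof is correct and is essentially identical to the paper's: both partition $H$ into $N_G(H)$-conjugacy classes, note that $|C_G(x)|$ is constant on each orbit, and apply orbit--stabilizer to rewrite the orbit contribution $|N_G(H) : N_G(H)\cap C_G(x_0)|\,|C_G(x_0)|$ as $|N_G(H)|\,|C_G(x_0) : C_G(x_0)\cap N_G(H)|$, an integer multiple of $|N_G(H)|$.
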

\begin{proof}
  Note that the function $x \to |C_G(x)|$ is constant over the orbits
  for the $N_G(H)$-action on $H$. For any element $x_0 \in H$, the
  contribution to the sum $\sum_{x \in H} |C_G(x)|$ from the orbit
  through $x_0$, $|N_G(H) : N_G(H) \cap C_G(x_0)| |C_G(x_0)| =
  |N_G(H)||C_G(x_0):C_G(x_0) \cap N_G(H)|$, is a multiplum of
  $|N_G(H)|$.  
\end{proof}

\begin{rmk}
  If $A \leq G$ with $O_p(A)$ and $O_p(C_G(A))$ both nontrivial then
  $C_{\catp SG*}(A) \simeq \ast \simeq \cat S{C_G(A)}{p+*}$ are both
  contractible.
In particular,
\begin{equation*}
  A \leq G, \; p \mid |A|, \; \text{$A$ abelian} \implies 
  \chi(C_{\catp SG*}(A)) = 1 =\chi(\cat S{C_G(A)}{p+*})
\end{equation*}
However, when $p \mid |A|$ and $A$ is nonabelian, the two \Euc s may
not be equal: Let $p=2$, $G=\Sigma_7$ the symmetric group, and
$A=\Sigma_3$ the subgroup $A=\gen{(1,2),(1,2,3)} \leq \Sigma_7$. Then
$C_G(A) = \Sigma_4$, $|C_G(A)|_2 = 8$, and $\rchi(\cat S{C_G(A)}{2+*})
= 0$ since $O_2(\Sigma_4) \neq 1$.  However, it turns out that
$\rchi(C_{\cat SG{2+*}}(A))=-8$. Thus $\chi(C_{\catp SG*}(A)) \neq
\chi(\cat S{C_G(A)}{p+*})$ in this case.
\end{rmk}

\begin{rmk}\label{rmk:rchiSGpK}  
  The $r$th, $r \geq 1$, reduced equivariant \Euc\ is an integer
  valued function $\rchi_r(\cat SG{p+*},-)$ in the second variable
  (Corollary~\ref{cor:xXCG}) on the set of conjugacy classes of
  subgroups of $G$. Proposition~\ref{prop:altequivchi} shows that for
  any subgroup $K$ of $G$
 \begin{equation*}
   \rchi_r(\cat SG{p+*},K) = \frac{1}{|K|}
   \sum_{[A] \in \cat SG{p'+\mathrm{eab}}/G}
   \rchi(C_{\cat SG{p+*}}(A)) \varphi_r(A) 
   \cat SG{}([A],K)
 \end{equation*}
 or, in matrix form,
 \begin{equation*}
   \begin{pmatrix}
   \hdots & \chi(C_{\cat SG{p+*}}(A)) \varphi_r(A) & \hdots   
   \end{pmatrix}_{A \in \cat SG{p'+\mathrm{abe}}/G}
   \begin{pmatrix}
     \cat SG{}([A],K)
   \end{pmatrix}_{\substack{A \in \cat SG{p'+\mathrm{abe}}/G \\ 
   K \in \cat SGp/G}} =
 \begin{pmatrix}
   \hdots & \rchi_r(\cat SG{p+*},K) & \hdots
 \end{pmatrix}_{K \in \cat SGp/G}
 \end{equation*}
 It suffices to let the sum range over the $p$-singular abelian
 subgroups of $G$ by Lemma~\ref{lemma:OpH} and we may replace $C_{\cat
   SG{p+*}}(A)$ by $C_{\cat SG{p+*+\mathrm{rad}}}(A)$ or $C_{\cat
   SG{p+*+\mathrm{eab}}}(A)$ by Proposition~\ref{prop:SGradeab}. When
 $K=1$ is the trivial subgroup we get the usual reduced \Euc\
 $\rchi(\cat SG{p+*})$ and when $K=G$ we get the $r$th reduced
 equvariant \Euc\ $\rchi_r(\cat SG{p+*},G)$ that we have focused on
 until now.  For the alternating group $G=A_5$ at $p=2$ the \Euc s
 $-\chi_r(\cat SG{2+*},K)$ for $r=1,2,3$ and for the subgroups $K$ of
 $G$ are
  \begin{center}  
    \begin{tabular}[t]{>{$}c<{$} | *{9}{>{$}r<{$}} }
      |K| &
      1 & 2  & 3 & 5 &   4 &   6  &  10 & 12 & 60 \\ \hline\noalign{\smallskip}
       -\rchi_1(\cat SG{2+*},K) &
        -4 & -2 & -2 & 0 & -1 & -1 & 0 & -1 & 0  \\
        -\rchi_2(\cat SG{2+*},K) &
         -4 & -2 & -4 & 4 & -1 & -2 & 2 & -3 & 1 \\
         -\rchi_3(\cat SG{2+*},K) &
         -4 & -2 & -10 & 24 & -1 & -5 & 12 & -9 & 8
    \end{tabular}
  \end{center}
  and the simple group  $G=\GL 3{\F_2}$, $p=2$ they are
  \begin{center} 
     \begin{tabular}[t]{>{$}c<{$} | *{15}{>{$}r<{$}} }
      |K| & 
       1 & 2 & 3 & 7 & 4 & 4 & 4 & 6 & 21 & 8 & 12 & 12 & 24 & 24 &
       168 \\ \hline\noalign{\smallskip}
     -\rchi_1(\cat SG{2+*},K) &
      8 & 4 & 2 & 2 & 2 & 2 & 2 & 1 & 0 & 1 & 0 & 0 & 0 & 0 & 0 \\
      -\rchi_2(\cat SG{2+*},K) &
       8 & 4 & 0 & 8 & 2 & 2 & 2 & 0 & 0 & 1 & -2 & -2 & -1 & -1 & 1  \\
      -\rchi_3(\cat SG{2+*},K) &
      8 & 4 & -6 & 50 & 2 & 2 & 2 & -3 & 8 & 1 & -8 & -8 & -4 & -4 & 12
     \end{tabular}
  \end{center} 
  No interpretations similar to those of Remark~\ref{exmp:3conjs} of
  the intermediate \Euc s $\chi_r(\cat SG{p+*},K)$, $r=1,2,3$, with
  nontrivial and proper subgroup $K$ are known.
\end{rmk}

\subsection{Ideals in the centralized poset $C_{\cat SG{p+*}}(A)$}
\label{sec:computechi}
In this subsection we discuss the relation between the centralizer
subposet $C_{\cat SG{p+*}}(A)$ and the poset $\cat S{C_G(A)}{p+*}$ of the
centralizer subgroup for an $A$-group $G$.

Let $\cat S-{}$ and $\cat S+{}$ be subposets of a (finite) poset $\cat
S{}{}$. By definition, $\cat S-{}$ is a left ideal and $\cat S+{}$ a
right ideal if
\begin{equation*}
  \forall a,b \in \cat S{}{} \colon
  \cat S-{} \ni a < b  \implies b  \in \cat S-{}, \qquad
  \forall a,b \in \cat S{}{} \colon
  a < b \in \cat S+{} \implies a  \in \cat S+{}
\end{equation*}
The \we\ on the left ideal $\cat S-{}$, $k^a = -\rchi(a//\cat S-{}) =
-\rchi(a//\cat S{}{})$, $a \in \cat S-{}$, is the restriction of the
\we\ on $\cat S{}{}$, and the co\we\ on the right ideal $\cat S+{}$,
$k_b = -\rchi(\cat S+{}//b) = -\rchi(\cat S{}{}//b)$, $b \in \cat
S+{}$, is the restriction of the co\we\ \cite[Theorem 3.7]{gm:2012}
\cite[Remark 2.6]{jmm_mwj:2010}.  $\cat S-{}$ is a left ideal if and
only if the complement $\cat S+{} = \cat S{}{} - \cat S-{}$ is a right
ideal.

\begin{prop}\label{prop:S+-}
  Let $\cat S{}{}$ be a finite poset.
  \begin{enumerate}
  \item \label{prop:S+-1} Suppose that $\cat S{}{} = \cat S-{} + \cat
    S+{}$ where $\cat S-{}$ is a left ideal and $\cat S+{}$ a right
    ideal. Then
    \begin{equation*}
  \chi(\cat S{}{}) = \chi(\cat S{-}{}) + \chi(\cat S{+}{}) -
  \sum_{
  \begin{subarray}c
    \cat S-{} \ni a > b \in \cat S+{}
  \end{subarray}}
   \rchi(a//\cat S{}{}) \rchi(\cat S{}{}//b)
  \end{equation*}
\item \label{prop:S+-2} Suppose that $\cat S{}{} = \cat S1{} \cup \cat
  S2{}$ is the union of two left or right ideals. Then
  \begin{equation*}
    \chi(\cat S{}{}) = \chi(\cat S1{}) + \chi(\cat S2{}) - \chi(\cat
    S1{} \cap \cat S2{})
  \end{equation*}
  \end{enumerate}
\end{prop}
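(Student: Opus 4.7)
The plan is to compute $\chi(\cat S{}{})$ as the alternating sum $\sum_{\sigma}(-1)^{|\sigma|-1}$ over nonempty chains in the order complex $\Delta(\cat S{}{})$, and to partition the chains according to which ideal their elements lie in. For \eqref{prop:S+-1}, the crucial structural fact is that since $\cat S-{}$ is a left ideal (upward closed) and $\cat S+{}$ is a right ideal (downward closed), any chain in $\cat S{}{}$ breaks uniquely into an initial segment in $\cat S+{}$ followed by a terminal segment in $\cat S-{}$: once a chain leaves $\cat S+{}$ it cannot re-enter it, and once it enters $\cat S-{}$ it stays. Thus the nonempty chains fall into three disjoint families: those entirely in $\cat S-{}$, those entirely in $\cat S+{}$, and ``mixed'' chains of the form $b_0 < \cdots < b_k < a_0 < \cdots < a_\ell$ with $b_i \in \cat S+{}$ and $a_j \in \cat S-{}$. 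The first two families contribute exactly $\chi(\cat S-{})$ and $\chi(\cat S+{})$.

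For the mixed contribution I would group chains by the transition pair $(b_k,a_0)=:(b,a)$, which runs over pairs with $a>b$, $a \in \cat S-{}$, $b \in \cat S+{}$. For each fixed $(b,a)$ the sum factors as the product of a sum over chains in $\cat S+{}$ terminating at $b$ and a sum over chains in $\cat S-{}$ starting at $a$. Direct sign bookkeeping evaluates these two factors to reduced Euler characteristics: the first is $1-\chi(\cat S+{}//b) = -\rchi(\cat S+{}//b)$, and dually the second is $-\rchi(a//\cat S-{})$. Using the ideal identities $\cat S+{}//b = \cat S{}{}//b$ and $a//\cat S-{} = a//\cat S{}{}$, together with the overall sign $(-1)^{k+\ell+1}$ coming from $|\sigma|-1$, the mixed contribution collapses exactly to $-\sum_{\cat S-{} \ni a > b \in \cat S+{}} \rchi(a//\cat S{}{})\,\rchi(\cat S{}{}//b)$, which yields the claimed formula.

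For \eqref{prop:S+-2}, I would apply ordinary inclusion--exclusion $\chi(X \cup Y) = \chi(X) + \chi(Y) - \chi(X \cap Y)$ to the order complexes. The identity $\Delta(\cat S1{}) \cap \Delta(\cat S2{}) = \Delta(\cat S1{} \cap \cat S2{})$ is immediate, so the only thing to check is $\Delta(\cat S{}{}) = \Delta(\cat S1{}) \cup \Delta(\cat S2{})$. The point is that when both $\cat S1{}$ and $\cat S2{}$ are left ideals (respectively right ideals), every chain in $\cat S{}{}$ lies entirely in one of them, because the minimum (respectively maximum) element of any chain sits in some $\mathcal{S}_i$ and then upward-closure (respectively downward-closure) forces the rest of the chain into the same ideal.

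The hard part will be essentially just the careful sign bookkeeping in the factorisation of the mixed-chain sum in \eqref{prop:S+-1}; once the three-way partition of chains is in place and the two factored sums are identified with reduced Euler characteristics of strict coslice and slice posets, the formula falls out mechanically, and \eqref{prop:S+-2} is then a straightforward separate application of Mayer--Vietoris on order complexes.
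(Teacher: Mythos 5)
Your proof is correct and takes essentially the same approach as the paper: both decompose the nonempty chains of $\cat S{}{}$ into those entirely in $\cat S-{}$, those entirely in $\cat S+{}$, and mixed chains, group the mixed chains by their unique transition pair $(b,a)$, and factor the resulting sum into a product of reduced Euler characteristics of the slice $\cat S{}{}//b$ and coslice $a//\cat S{}{}$; part \eqref{prop:S+-2} is Mayer--Vietoris for both. You merely spell out the sign bookkeeping and the chain-partition argument in slightly more detail than the paper does.
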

\begin{proof}
  \noindent\eqref{prop:S+-1}  Simplices in $\cat S{}{}$ not entirely 
  contained in $\cat S-{}$ or $\cat S+{}$ contain $b<a$ for some $b
  \in \cat S+{}$ and some $a \in \cat S-{}$. They have the form $b<a$,
  $b<a<a_0<\cdots<a_d$, $b_0<\cdots<b_e<b<a$, or
  $b_0<\cdots<b_e<b<a<a_0<\cdots<a_d$. Their contribution to the \Euc\
  of $\cat S{}{}$ is $-1+\chi(a//\cat S{}{})+\chi(\cat
  S{}{}//b)-\chi(a//\cat S{}{})\chi(\cat S{}{}//b) =
  -(\chi(a//\cat S{}{})-1)(\chi(\cat S{}{}//b)-1) = -\rchi(a//\cat
  S{}{})\rchi(\cat S{}{}//b)$. 

  \noindent \eqref{prop:S+-2} Mayer--Vietoris.
\end{proof}

More generally, in case $\cat S{}{} = \bigcup_{j \in J} \cat S j{}$
is the union of finitely many left or right ideals $\cat Sj{}$ then
there is an inclusion-exclusion principle
\begin{equation*}
  \chi(\cat S{}{}) = \sum_{K \subset J} 
  (-1)^{|K|-1} \chi( \bigcap_{k \in K} \cat S k{})
\end{equation*}
as we see by induction from
Proposition~\ref{prop:S+-}.\eqref{prop:S+-2}. 




Suppose that $A$ is a $p$-regular group acting on $G$ and let $H$ be
an $A$-normalized $p$-subgroup of $G$.  Then $H = [H,A] C_H(A)$ is the
product of the normal commutator subgroup $[H,A]$ and centralizer
subgroup $C_H(A)$ \cite[Theorem~5.3.5]{gorenstein68}.  Because
\begin{itemize}
\item $H \leq K \iff H \leq [K,A]$ if $[H,A]=H$
\item $H \leq K \iff H \leq C_K(A)$ if $[H,A]=1$ 
\end{itemize}
there are adjunctions
\begin{center}
\begin{tikzpicture}[>=stealth']
  \matrix (dia) [matrix of math nodes, column sep=55pt, row sep=20pt]{
   \cat S1{} = \{K \mid [K,A] \neq 1 \}  &  
   \{ H \mid [H,A]=H \}   \\
   \cat S2{} = \{K \mid C_K(A) \neq 1 \}  & 
   \{ H \mid [H,A]= 1\} \\};
   
 \draw[->] (dia-1-1.5) -- (dia-1-1.5 -| dia-1-2.west)
 node[pos=.5,above] {$K \to [K,A]$};  
 \draw[->] (dia-1-2.182) -- (dia-1-2.182 -| dia-1-1.east)
 node[pos=.5,below] {$H \leftarrow H$};

 \draw[->] (dia-2-1.5) -- (dia-2-1.5 -| dia-2-2.west)
 node[pos=.5,above] {$K \to C_K(A)$};  
 \draw[->] (dia-2-2.182) -- (dia-2-2.182 -| dia-1-1.east)
 node[pos=.5,below] {$H \leftarrow H$};
 \end{tikzpicture}
\end{center}
between subposets of $C_{\cat SG{p+*}}(A)$. To set up the first
adjunction we use that $[K,A,A] = [K,A]$
\cite[Theorem~5.3.6]{gorenstein68}. We note that
\begin{equation*}
   \{ K \mid [K,A] \neq 1 \} = \{ K \mid C_K(A) \neq K \}, \qquad
   \{ H \mid [H,A]= 1\}  = \{H \mid C_H(A) =H \} = \cat S{C_G(A)}{p+*}
\end{equation*}
where we use that $[K,A] = 1 \iff C_K(A)=K$.

\begin{cor}\label{cor:S+-}
  Let $A$ be a $p$-regular group acting on $G$.
  \begin{enumerate}
  \item The left ideal $\cat S1{} = \{ K \in C_{\cat SG{p+*}}(A) \mid
    [K,A] \neq 1 \}$ and the subposet $\{ H \in C_{\cat SG{p+*}}(A)
    \mid [H,A]=H \}$ have the same \Euc s. \label{cor:S+-1}
  \item The left ideal $\cat S2{} = \{ K \in C_{\cat SG{p+*}}(A) \mid
    C_K(A) \neq 1 \}$ and the poset $\cat S{C_G(A)}{p+*}$ of the
    centralizer subgroup have the same \Euc s. \label{cor:S+-2}
\item \label{cor:S+-3} The difference between the \Euc s of the
  centralizer subposet, $C_{\cat SG{p+*}}(A)$, and the 
  centralizer subgroup poset, $\cat S{C_G(A)}{p+*}$, is
  \begin{equation*}
    \chi(C_{\cat SG{p+*}}(A))-\chi(\cat S{C_G(A)}{p+*})  =
    \chi(\{K \in C_{\cat SG{p+*}}(A) \mid C_K(A) \lneqq K \}) -
    \chi(\{ K \in C_{\cat SG{p+*}}(A) \mid 1 \lneqq C_K(A) \lneqq K \}) 
  \end{equation*}
  \end{enumerate}
\end{cor}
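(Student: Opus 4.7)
My plan is to combine the two adjunctions displayed immediately before the corollary with the Mayer--Vietoris formula of Proposition~\ref{prop:S+-}.\eqref{prop:S+-2}.

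For \eqref{cor:S+-1} and \eqref{cor:S+-2}, I would verify that the two displayed pairs of functors really are adjunctions between the indicated subposets of $C_{\cat SG{p+*}}(A)$. In the first pair, the left adjoint $K \mapsto [K,A]$ lands in $\{H \mid [H,A]=H\}$ because $[K,A,A]=[K,A]$ (\cite[Theorem~5.3.6]{gorenstein68}); the right adjoint is the inclusion, and the unit/counit are the tautological inequalities $H=[H,A]\le H$ and $[K,A]\le K$. The second pair is analogous with $K \mapsto C_K(A)$, and its target $\{H \mid [H,A]=1\}$ is exactly $\cat S{C_G(A)}{p+*}$ as noted in the discussion above the corollary. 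Since adjunctions of finite categories preserve \Euc s (\cite[Proposition~2.4]{leinster08}, or Proposition~\ref{prop:adjunction} with trivial $A$-action), the first two claims follow.

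For \eqref{cor:S+-3}, the first step is to check that $C_{\cat SG{p+*}}(A) = \cat S1{} \cup \cat S2{}$. This is where the $p$-regularity hypothesis enters: by the coprime decomposition $K = [K,A]\,C_K(A)$ (\cite[Theorem~5.3.5]{gorenstein68}), any nontrivial $A$-normalized $p$-subgroup $K$ has either $[K,A]\ne 1$ or $C_K(A)\ne 1$. Both $\cat S1{}$ and $\cat S2{}$ are left ideals because $K\le L$ forces $[K,A]\le [L,A]$ and $C_K(A)\le C_L(A)$. Proposition~\ref{prop:S+-}.\eqref{prop:S+-2} then yields
\begin{equation*}
  \chi(C_{\cat SG{p+*}}(A)) = \chi(\cat S1{}) + \chi(\cat S2{}) - \chi(\cat S1{} \cap \cat S2{}).
\end{equation*}
Substituting $\chi(\cat S2{}) = \chi(\cat S{C_G(A)}{p+*})$ from \eqref{cor:S+-2} and rewriting the two left ideals in terms of $C_K(A)$, namely $\cat S1{} = \{K \mid C_K(A)\lneqq K\}$ and $\cat S1{}\cap\cat S2{} = \{K \mid 1\lneqq C_K(A)\lneqq K\}$, gives the stated formula after rearrangement.

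The only non-formal ingredient is the coprime product $K = [K,A]\,C_K(A)$, which uses exactly that $A$ is $p$-regular and acts on a $p$-group; I expect this to be the main conceptual step, while the adjunction verifications and the Mayer--Vietoris manipulation are routine bookkeeping.
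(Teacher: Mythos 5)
Your proposal is correct and takes essentially the same route as the paper: use that the two displayed pairs of functors are adjunctions (so the Euler characteristics agree by \cite[Proposition~2.4]{leinster08}), then apply the Mayer--Vietoris formula of Proposition~\ref{prop:S+-}.\eqref{prop:S+-2} to the covering $C_{\cat SG{p+*}}(A)=\cat S1{}\cup\cat S2{}$, which rests on the coprime factorization $K=[K,A]\,C_K(A)$. Your write-up merely spells out the steps that the paper leaves implicit (the left-ideal verification, the union being all of $C_{\cat SG{p+*}}(A)$, and the rewriting $\cat S1{}=\{K\mid C_K(A)\lneqq K\}$), all of which the paper has already established in the discussion preceding the corollary.
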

\begin{proof}
  \noindent \eqref{cor:S+-1} -- \eqref{cor:S+-2} Adjoint finite posets
  have identical \Euc s \cite[Proposition 2.4]{leinster08}.

  \noindent \eqref{cor:S+-3} Apply
  Proposition~\ref{prop:S+-}.\eqref{prop:S+-2} with $\cat S{}{} =
  C_{\cat SG{p+*}}(A) = \cat S1{} \cup \cat S2{}$ where $\cat S1{}$
  and $\cat S2{}$ are the two left ideals from \eqref{cor:S+-1} --
  \eqref{cor:S+-2}
 and use that $\chi(\cat
  S2{}) = \chi(\cat S{C_G(A)}{p+*})$ by \eqref{cor:S+-2}.
\end{proof}

The Mayer--Vietoris relation of
Corollary~\ref{cor:S+-}.\eqref{cor:S+-3} may also be written as 
\begin{equation*}
 \chi(C_{\cat SG{p+*}}(A))=
 \chi(\{ H \mid [H,A]=1 \}) +
 \chi(\{ H \mid [H,A] = H \}) -
 \chi(\{ H \mid  1 \lneqq C_H(A) \lneqq H \})
\end{equation*}
This identity is tautological when $A$ is trivial.

Since the \we\ (co\we) of $C_{\cat SG{p+*}}(A)$ is concentrated on the
$G$-radical $p$-subgroups (elementary abelian $p$-subgroups)
(Lemma~\ref{lemma:wtA}), the left (right) ideals of
Corollary~\ref{cor:S+-}.\eqref{cor:S+-1}--\eqref{cor:S+-3} have the
same \Euc s as the corresponding left ideals in $C_{\cat
  SG{p+*+\mathrm{rad}}}(A)$ ($C_{\cat SG{p+*+\mathrm{eab}}}(A)$).

\begin{cor}\label{cor:SC}
  Let $A$ be a $p$-regular group acting on $G$. Suppose that $C_K(A)
  \neq 1$ for all nonidentity $G$-radical $p$-subgroups $K$ of $G$. Then
  $\chi(\cat S{C_G(A)}{p+*}) = \chi(C_{\cat SG{p+*}}(A))$.
\end{cor}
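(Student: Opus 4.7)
The plan is to deduce the result directly from Corollary~\ref{cor:S+-}.\eqref{cor:S+-3}, which already expresses the discrepancy
\begin{equation*}
\chi(C_{\cat SG{p+*}}(A))-\chi(\cat S{C_G(A)}{p+*})
= \chi(\cat S_1) - \chi(\cat S_1 \cap \cat S_2),
\end{equation*}
where $\cat S_1 = \{K \in C_{\cat SG{p+*}}(A) \mid C_K(A) \lneqq K\}$ and $\cat S_2 = \{K \in C_{\cat SG{p+*}}(A) \mid C_K(A) \neq 1\}$. It therefore suffices to show that the right hand side vanishes under the hypothesis.

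The key observation is that $\cat S_1$ and $\cat S_1 \cap \cat S_2$ are both left ideals of $C_{\cat SG{p+*}}(A)$, since $\cat S_1$ and $\cat S_2$ are both left ideals (each defined by a condition preserved under passage to a larger $A$-normalized $p$-subgroup). By the general principle recalled at the start of \S\ref{sec:computechi}, the weighting of $C_{\cat SG{p+*}}(A)$ restricts to the weighting of any left ideal, so each of these two Euler characteristics is the sum of the weighting values $k^K$ taken over its elements. By Proposition~\ref{prop:weCSGpradA}, that weighting is supported on $C_{\catp SG{*+\mathrm{rad}}}(A)$, the nonidentity $A$-normalized $G$-radical $p$-subgroups of $G$. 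Hence only the $G$-radical elements of $\cat S_1$ (respectively $\cat S_1 \cap \cat S_2$) contribute to these sums.

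The final step invokes the hypothesis: every nonidentity $G$-radical $p$-subgroup $K$ of $G$ satisfies $C_K(A) \neq 1$, so any $G$-radical element of $\cat S_1$ automatically belongs to $\cat S_2$ as well. Thus the two sets $\cat S_1 \cap C_{\catp SG{*+\mathrm{rad}}}(A)$ and $\cat S_1 \cap \cat S_2 \cap C_{\catp SG{*+\mathrm{rad}}}(A)$ coincide, which forces $\chi(\cat S_1) = \chi(\cat S_1 \cap \cat S_2)$ and completes the argument. I do not anticipate any real obstacle here: the statement is essentially a clean packaging of the ``$G$-radical concentration'' of the weighting (Proposition~\ref{prop:weCSGpradA}) together with the Mayer--Vietoris identity already proved in Corollary~\ref{cor:S+-}.
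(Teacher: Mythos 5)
Your argument is correct and takes essentially the same route as the paper: the paper's proof invokes Corollary~\ref{cor:S+-}.\eqref{cor:S+-3} together with the remark immediately preceding Corollary~\ref{cor:SC} (that the \we\ of $C_{\cat SG{p+*}}(A)$ is concentrated on the $G$-radical subgroups, so these left ideals have the same \Euc s as their intersections with $C_{\cat SG{p+*+\mathrm{rad}}}(A)$), and then observes that the two radical subposets coincide under the hypothesis. You have simply unpacked that remark into the explicit weighting-restriction argument, which is exactly its justification.
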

\begin{proof}
  The subposets $\{K \in C_{\cat SG{p+*+\mathrm{rad}}}(A) \mid C_K(A)
  \lneqq K \}$ and $\{ K \in C_{\cat SG{p+*+\mathrm{rad}}}(A) \mid 1
  \lneqq C_K(A) \lneqq K \}$ are identical so $\chi(\cat
  S{C_G(A)}{p+*}) = \chi(C_{\cat SG{p+*}}(A))$ by
  Corollary~\ref{cor:S+-}.\eqref{cor:S+-3}.
\end{proof}

\begin{exmp}
When $G=\Sigma_7$, $p=2$, and $A$ is any of the 
six  abelian $2$-regular subgroups
of $G$, the reduced \Euc s of Corollary~\ref{cor:S+-}.\eqref{cor:S+-3}
are
\begin{center} 
   \begin{tabular}[t]{>{$}c<{$} | *{6}{>{$}r<{$}} }
     A & 1 & 3 & 3 & 5 & 7 & 3 \times 3 \\ \hline \noalign{\smallskip}
   \rchi(C_{\cat SG{p+*}}(A)) & 160 &-8 &4 &0 &-1 &1  \\   
   \rchi(\cat S{C_G(A)}{p+*}) & 160 & 0 & 2 & 0 & -1 &-1 \\
   \rchi(\{K \mid C_K(A) \lneqq K \})  &-1 &3 &4 &-1 &-1 &1 \\
   \rchi(\{K \mid 1 \lneqq C_K(A) \lneqq K\}) &-1 &11 &2 &-1 &-1 &-1 \\
   |C_G(A)|_p &16 &8 &2 &2 &1 &1
   \end{tabular}
   \end{center}
   We note that $ \rchi(C_{\cat SG{p+*}}(A))$ and $\rchi(\cat
   S{C_G(A)}{p+*})$ may not be equal, that both reduced \Euc s are
   divisible by the $p$-part $|C_G(A)|_p$, and that, in each column,
   the differences of Corollary~\ref{cor:S+-}.\eqref{cor:S+-3} between
   the first two numbers and the next two numbers are equal.
\end{exmp}

\section{The orbit category of $p$-subgroups}
\label{sec:orbit}

Write $\cat OG{}$ for the category of subgroups, $H$ and $K$, of $G$
with \m\ sets and auto\m\ groups
\begin{equation*}
  \cat OG{}(H,K) = N_G(H,K)/K, \qquad \cat OG{}(H) = N_G(H)/H
\end{equation*}
where $N_G(H,K) = \{x \in G | H^x \leq K \}$ is the transporter set.
In other words, $\cat OG{}$ is the finite $\mathrm{EI}$-category whose
objects are the transitive $G$-orbits $G/H$ and whose \m s are the
left $G$-maps $G/H \to G/K$ between the orbits: The effect of $x \in
N_G(H,K)/K$ is $G/H \ni gH \to gxK \in G/K$ for any $g \in G$.  $\cat
OGp$ is the full subcategory of $\cat OG{}$ generated by all the
$p$-subgroups of $G$.

\subsection{\Euc\ of the orbit category $\cat OGp$}
\label{sec:chiOG}
Frobenius proved in $1907$ that the number $|G_p|$ of $p$-singular
elements in $G$ is divisible by $|G|_p$
\cite{frobenius:1907,isaacsrobinson} \cite[Corollary~41.11]{cr}
\cite[11.2, Corollary~2]{serre77}.

\begin{thm}[Frobenius $1907$]
\label{thm:frobenius}
  $|G|_p \mid |G_p|$
\end{thm}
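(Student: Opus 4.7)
I would derive Frobenius's theorem as a short consequence of the $A=1$ case of Theorem~\ref{thm:global}. The affine relation in Theorem~\ref{thm:global}.\eqref{thm:global1} specialises to
\[
|G_p| = \sum_{H \in \cat SG{p+\mathrm{rad}}} |H|\,\bigl(-\rchi(\cat S{N_G(H)/H}{p+*})\bigr),
\]
and the non-equivariant divisibility in Theorem~\ref{thm:global}.\eqref{thm:global2} (Brown's theorem) asserts that $|K|_p$ divides $\rchi(\cat SK{p+*})$ for every finite group $K$. The idea is to feed Brown into each local quotient $N_G(H)/H$ occurring in the affine sum and observe that the resulting expression is manifestly a multiple of $|G|_p$.

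First I would collect the right-hand sum by $G$-conjugacy classes of $G$-radical $p$-subgroups, so that the class $[H]$ contributes $|G:N_G(H)| \cdot |H| \cdot \bigl(-\rchi(\cat S{N_G(H)/H}{p+*})\bigr)$. Next, since $H = O_p(N_G(H))$ for a $G$-radical $p$-subgroup, the quotient $N_G(H)/H$ satisfies $O_p(N_G(H)/H)=1$, and Brown's theorem applied to this strictly smaller group (when $H \ne 1$) or to $G$ itself (when $H=1$) produces an integer $m_H$ with $-\rchi(\cat S{N_G(H)/H}{p+*}) = |N_G(H)/H|_p \cdot m_H$. Then I would use the arithmetic identity obtained by taking $p$-parts of $|G| = |G:N_G(H)| \cdot |N_G(H)|$, namely
\[
|G:N_G(H)| \cdot |N_G(H)|_p = |G|_p \cdot |G:N_G(H)|_{p'},
\]
together with $|H| \cdot |N_G(H)/H|_p = |N_G(H)|_p$, to rewrite the $[H]$-contribution as $|G|_p \cdot |G:N_G(H)|_{p'} \cdot m_H$. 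Summing over conjugacy classes $[H]$ then exhibits $|G_p|$ as an integer multiple of $|G|_p$.

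The main obstacle is conceptual: for an individual $G$-radical $H$ the local $p$-part $|N_G(H)|_p$ can be a proper divisor of $|G|_p$, so Brown's theorem applied to the single quotient $N_G(H)/H$ is in itself weaker than what is needed. The compensation is the $p$-part of the orbit size $|G:N_G(H)|$ that appears once the sum is collected class by class, and this is the only place where global information about $G$ enters. The genuinely nontrivial input is the affine relation of Theorem~\ref{thm:global}.\eqref{thm:global1}, which packages the Möbius-theoretic content inside the orbit category; given it, the present derivation of Frobenius is purely arithmetic.
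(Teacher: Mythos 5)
Your argument is correct and coincides with the paper's own ``assume Brown, derive Frobenius'' direction in the proof of Theorem~\ref{thm:frobeniusbrown}: the paper recasts the $A=1$ affine relation as Equation~\eqref{eq:chiOG} with the $H=1$ term $\rchi(\cat SG{p+*})$ isolated, and the factorization you perform of the class-$[H]$ contribution into $|G|_p\cdot|G:N_G(H)|_{p'}\cdot m_H$ is exactly the paper's splitting of that summand into the integer $\rchi(\cat S{\cat OG{}(H)}{p+*})/|\cat OG{}(H)|_p$ times the integer $|G|/|\cat OG{}(H)|_{p'}$, which is divisible by $|G|_p$. One caveat is worth making explicit to close the argument: Brown's theorem must be used with a proof independent of Frobenius, since the paper's \emph{first} proof of Theorem~\ref{thm:global}.\eqref{thm:global2} invokes Frobenius (the Quillen-style \emph{second} proof does not), and it is needed for $G$ itself, not merely for the proper subquotients $N_G(H)/H$ by induction, because the $H=1$ term $\rchi(\cat SG{p+*})$ requires Brown's divisibility directly.
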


K.\@S.\@ Brown proved in $1975$ that the reduced \Euc\ of the Brown
poset $\cat SG{p+*}$ is divisible by $|G|_p$ \cite[Corollary
2]{brown75}. This was later reproved with new arguments by Quillen
\cite[Corollary 4.2]{quillen78} or Webb \cite[Theorem 8.1]{webb87}.

\begin{thm}[Brown $1975$]  \label{thm:brown}
  $|G|_p \mid \rchi(\cat SG{p+*})$
\end{thm}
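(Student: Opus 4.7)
The plan is to prove Theorem~\ref{thm:brown} by strong induction on $|G|$, deriving it from Theorem~\ref{thm:frobenius} via the affine relation~\eqref{eq:global1} --- that is, the trivial-$A$ case of Theorem~\ref{thm:global}.\eqref{thm:global1}, which the paper establishes independently. The base case $|G|_p=1$ is vacuous. For the inductive step, assume Brown's theorem for all groups of order strictly less than $|G|$.

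First I would isolate the $H=1$ summand in~\eqref{eq:global1}. Since $N_G(1)/1=G$ and $|1|=1$, that summand is exactly $-\rchi(\cat SG{p+*})$, so the identity rearranges to
\begin{equation*}
\rchi(\cat SG{p+*}) \;=\; -|G_p| \;-\; \sum_{1 < H \in \cat SG{p+\mathrm{rad}}} \rchi(\cat S{N_G(H)/H}{p+*})\,|H|.
\end{equation*}
Since the summand depends only on the $G$-conjugacy class of $H$, I would group the sum by conjugacy classes, each representative carrying the weight $|G:N_G(H)|$. By Theorem~\ref{thm:frobenius}, the term $|G_p|$ is divisible by $|G|_p$. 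For each nontrivial $G$-radical $p$-subgroup $H$, the quotient $N_G(H)/H$ has order strictly less than $|G|$, so the inductive hypothesis gives $|N_G(H)/H|_p \mid \rchi(\cat S{N_G(H)/H}{p+*})$. Combined with the routine $p$-arithmetic
\begin{equation*}
|G:N_G(H)| \cdot |H| \cdot |N_G(H)/H|_p \;=\; |G:N_G(H)| \cdot |N_G(H)|_p \;=\; |G:N_G(H)|_{p'} \cdot |G|_p,
\end{equation*}
this shows each grouped summand is a multiple of $|G|_p$, so $|G|_p$ divides $\rchi(\cat SG{p+*})$.

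The main obstacle is not this propagation --- the divisibility bookkeeping is routine --- but the prior establishment of~\eqref{eq:global1}. That relation (the $A=1$ case of Theorem~\ref{thm:global}.\eqref{thm:global1}) requires a genuine weighting analysis of the orbit category $\cat OGp$, building on Proposition~\ref{prop:SGpwecowe}; intuitively, one must match the contribution of each radical $p$-subgroup in the weighting formula against a counting of $p$-singular elements, which is where the combinatorial work lies. Once~\eqref{eq:global1} is in hand, the argument above simply propagates Frobenius's divisibility through the radical-subgroup sum to deliver Brown's, confirming the equivalence of the two theorems announced in the introduction.
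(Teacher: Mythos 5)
Your proposal is correct and takes essentially the same route as the paper: the $A=1$ case of the affine relation (Equation~\eqref{eq:global1}, equivalently \eqref{eq:chiOG}) together with Frobenius's theorem and strong induction is precisely how the paper derives Brown's theorem in (one direction of) Theorem~\ref{thm:frobeniusbrown}. Your explicit $p$-arithmetic matches the paper's factorization $|G|/|\cat OG{}(H)|_{p'} = |G:N_G(H)|\,|H|\,|\cat OG{}(H)|_p$, and the conjugacy-class grouping matches the paper's summation over $[H]$.
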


In this section we give yet another argument for Brown's theorem using
the \Euc\ of the orbit category $\cat OGp$. This argument will show
that the two theorems of Frobenius and Brown are, in some sense,
equivalent (and thus equivalent to the Sylow theorem). We finish the
section by proving Theorem~\ref{thm:global}.

\begin{lemma}\label{lemma:Gp}
  The number of $p$-singular elements in $G$ is
  \begin{equation*}
    |G_p| = 1+ \sum_{1<C\leq G}(1-p^{-1})|C| =
    p^{-1} +  \sum_{1 \leq C\leq G}(1-p^{-1})|C|
  \end{equation*}
  where the sum is over all cyclic $p$-subgroups $C$ of $G$.
\end{lemma}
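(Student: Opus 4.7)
The plan is to partition $G_p$ according to the cyclic subgroup each $p$-singular element generates, and then sum a totient count.

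First I would write $G_p = \bigsqcup_{C} \{g \in G \mid \gen{g}=C\}$, where $C$ ranges over all cyclic $p$-subgroups of $G$ (including the trivial subgroup, which accounts for the identity element $1 \in G_p$). The fibre over a given $C$ has cardinality $\varphi(|C|)$, the number of generators of the cyclic group $C$. Hence
\begin{equation*}
|G_p| = \sum_{C \leq G \text{ cyclic } p\text{-subgroup}} \varphi(|C|).
\end{equation*}

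Second, I would plug in the explicit values of Euler's totient on $p$-powers: $\varphi(1) = 1$, and for a cyclic $p$-subgroup $C$ with $|C| = p^k$, $k \geq 1$, we have $\varphi(p^k) = p^k - p^{k-1} = (1-p^{-1})|C|$. Isolating the trivial cyclic subgroup from the sum gives the first claimed identity
\begin{equation*}
|G_p| = 1 + \sum_{1 < C \leq G}(1-p^{-1})|C|.
\end{equation*}

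Finally, the second equality is a trivial algebraic rewriting: including the term for $C=1$ in the sum adds $(1-p^{-1}) \cdot 1$, so to preserve equality one must subtract this from the leading $1$, yielding $1 - (1-p^{-1}) = p^{-1}$ outside the sum. No step presents a real obstacle; the only mild subtlety is remembering that the identity element belongs to $G_p$ and must be accounted for by the trivial cyclic subgroup, which is why the two displayed forms differ only in how the $C=1$ contribution is bookkept.
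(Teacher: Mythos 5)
Your proof is correct and follows essentially the same route as the paper: partition the $p$-singular elements according to the cyclic subgroup they generate, count generators of each cyclic $p$-subgroup via Euler's totient, and observe that $\varphi(p^k)=(1-p^{-1})p^k$ for $k\geq 1$. The paper phrases the partition as an equivalence relation on $G_p$ rather than as a fibring, but the content and the bookkeeping for the trivial subgroup are identical.
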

\begin{proof}
  Declare two $p$-singular elements to be equivalent of they generate
  the same cyclic subgroup. The set of equivalence classes is the set
  of cyclic $p$-subgroups $C$ of $G$.  The number of elements in the
  equivalence class $C$ is the number of generators of $C$: $1$ if
  $|C|=1$ and $|C|-p^{-1}|C|$ if $|C| > 1$.
\end{proof}

Let $V$ be an elementary abelian $p$-group (or a finite dimensional
vector space over the finite field $\F_p$ with $p$ elements).

\begin{prop}\label{prop:cowtOV}
  The function
  \begin{equation*}
    k_U =
    \begin{cases}
      |V|^{-1} & \dim U = 0 \\
      (p-1)|V|^{-1}  & \dim U = 1 \\
      0 & \text{otherwise}
    \end{cases}
  \end{equation*}
is a co\we\ for the orbit category $\cat OV{}$.
\end{prop}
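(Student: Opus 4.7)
The plan is to verify the defining equation for a co-weighting directly by a short counting argument, exploiting the very simple structure of the orbit category when the ambient group is abelian.

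First I would write down what the formula to verify actually says. By Leinster's definition a co-weighting $k_\bullet$ on $\cat OV{}$ satisfies $\sum_U k_U |\cat OV{}(U,W)| = 1$ for every subgroup $W \leq V$. Since $V$ is abelian, the transporter set is $N_V(U,W) = V$ if $U \leq W$ and $\emptyset$ otherwise; consequently
\begin{equation*}
  |\cat OV{}(U,W)| = |N_V(U,W)/W| =
  \begin{cases} |V|/|W| & U \leq W \\ 0 & \text{otherwise.} \end{cases}
\end{equation*}
Thus the required identity reduces to $\sum_{U \leq W} k_U = |W|/|V|$, and the proposed $k_U$ is supported only on the zero subspace and the lines of $V$, so only subgroups of $W$ of dimension $0$ or $1$ contribute.

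Next I would carry out the counting. The number of $1$-dimensional subspaces of a $d$-dimensional $\F_p$-vector space $W$ is $(|W|-1)/(p-1)$. Substituting the given values of $k_U$ yields
\begin{equation*}
  \sum_{U \leq W} k_U
  = \frac{1}{|V|} + \frac{p-1}{|V|} \cdot \frac{|W|-1}{p-1}
  = \frac{1}{|V|} \bigl(1 + (|W|-1)\bigr)
  = \frac{|W|}{|V|},
\end{equation*}
which is exactly what is required. This verifies the co-weighting condition for every $W \leq V$.

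There is no real obstacle; the only subtlety is to remember that in the orbit category of the abelian group $V$ the morphism set $\cat OV{}(U,W)$ is empty unless $U \leq W$ (not merely $U$ conjugate to a subgroup of $W$), because $V$ acts trivially by conjugation on its subgroups. Once that is noted, the proof is a one-line calculation. I would also remark, since it motivates the formula and will be used in the sequel, that Leinster's general uniqueness of weightings for categories with invertible zeta matrix guarantees that the $k_U$ so determined is the unique co-weighting of $\cat OV{}$, which makes the explicit form above canonical.
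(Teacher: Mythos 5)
Your proof is correct and takes essentially the same approach as the paper: the paper's proof simply states the required identity $\sum_{U_1 \leq U_2} k_{U_1}|V|/|U_2| = 1$ and notes it ``is easily verified,'' and you have carried out precisely that verification, using the count $(|W|-1)/(p-1)$ of lines in $W$. Your observations about why $\cat OV{}(U,W)$ vanishes unless $U \leq W$ and about uniqueness of the co-weighting are correct but ancillary.
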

\begin{proof}
  The assertion is that
  \begin{equation*}
    \sum_{U_1 \leq U_2}k_{U_1} \frac{|V|}{|U_2|} = 1
  \end{equation*}
  for any subspace $U_2$ of $V$. This is easily verified.
\end{proof}

Let $\cat OV{[1,V)}$ be the full subcategory of $\cat OV{}$ generated
by all objects but the final object $V$. 

\begin{cor}\label{cor:cowtOV}
  The \Euc\ of $\cat OV{[1,V)}$ is
  \begin{equation*}
    \chi(\cat OV{[1,V)}) =
    \begin{cases}
      0 & \dim V =0 \\
      p^{-1} & \dim V = 1 \\
      1 & \dim V > 1
    \end{cases}
  \end{equation*}
\end{cor}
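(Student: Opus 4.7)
The plan is to deduce Corollary~\ref{cor:cowtOV} from Proposition~\ref{prop:cowtOV} by restricting the given co-weighting from $\cat OV{}$ down to $\cat OV{[1,V)}$ and then evaluating the resulting sum in each of the three dimensional regimes.

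First I would set up the ambient Euler characteristic. The subspace $V$ is terminal in $\cat OV{}$ because $\cat OV{}(U,V) = N_V(U,V)/V$ is a singleton for every $U \leq V$; hence $B\cat OV{}$ is contractible and $\chi(\cat OV{}) = 1$. As a consistency check, summing the co-weighting of Proposition~\ref{prop:cowtOV} directly gives
\begin{equation*}
  \sum_{U \leq V} k_U \;=\; |V|^{-1} + \frac{|V|-1}{p-1}\cdot(p-1)|V|^{-1} \;=\; 1.
\end{equation*}

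The key step is the observation that $k$ restricts to a co-weighting on the full subcategory $\cat OV{[1,V)}$. Indeed, for any object $U_2 \in \cat OV{[1,V)}$ one has $U_2 \subsetneq V$, so the condition $U_1 \leq U_2$ forces $U_1 \neq V$. The defining identity
\begin{equation*}
  \sum_{U_1 \leq U_2} k_{U_1}\,|\cat OV{}(U_1,U_2)| \;=\; 1
\end{equation*}
therefore only involves objects of $\cat OV{[1,V)}$ and is precisely the co-weighting identity for the restriction. Summing yields
\begin{equation*}
  \chi(\cat OV{[1,V)}) \;=\; \sum_{U \in \cat OV{[1,V)}} k_U \;=\; \chi(\cat OV{}) - k_V \;=\; 1 - k_V.
\end{equation*}
Inserting the formula from Proposition~\ref{prop:cowtOV} finishes the proof: $k_V = |V|^{-1} = 1$ when $\dim V = 0$, giving $\chi = 0$; $k_V = (p-1)/p$ when $\dim V = 1$, giving $\chi = 1/p$; and $k_V = 0$ when $\dim V > 1$, giving $\chi = 1$.

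There is no real obstacle here; the only point to justify with care is the restriction step, and it is immediate because $V$ is maximal in the subspace lattice and so never occurs as an index $U_1$ in the co-weighting sum for a target $U_2 \subsetneq V$. No additional boundary corrections appear upon deleting the terminal object.
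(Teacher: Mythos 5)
Your proof is correct and uses the same key inputs as the paper: Proposition~\ref{prop:cowtOV} together with the restriction of its co\we\ to the right ideal $\cat OV{[1,V)}$. The only difference is that you handle all three cases uniformly via the single identity $\chi(\cat OV{[1,V)}) = \chi(\cat OV{}) - k_V = 1 - k_V$, whereas the paper identifies the degenerate cases directly ($\dim V = 0$ gives the empty category, $\dim V = 1$ gives the one-object category of the group $V$ with $\chi = |V|^{-1}$) and invokes the restriction argument only when $\dim V > 1$, so yours is a mild unification rather than a genuinely different route.
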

\begin{proof}
  $\cat OV{[1,V)}$ is the $0$-object category when $\dim V=0$ and the
  $1$-object category given by the group $V$ when $\dim V=1$. When
  $\dim V>1$, $\chi(\cat OV{[1,V)}) = \chi(\cat OV{})$, because the
  co\we\ for $\cat OV{}$ restricts to a co\we\ for the right ideal
  $\cat OV{[1,V)}$ \cite[Remark 2.6]{jmm_mwj:2010} and it has value
  $0$ at the deleted object $V$. Now note that as the category
  $\chi(\cat OV{})$ has a final object, its \Euc\ is $1$
  \cite[Examples 2.3.(d)]{leinster08}.
\end{proof}

\begin{prop}\cite[Theorem 4.1]{jmm_mwj:2010}\label{prop:coweOGp}
  The function
  \begin{equation*}
    k_K =
    \begin{cases}
      |G|^{-1} & K=1 \\
      |G|^{-1}(1-p^{-1})|K| & \text{$K>1$ cyclic} \\
      0 & \text{otherwise}
    \end{cases}
  \end{equation*}
  is a co\we\ for $\cat OGp$ and the \Euc\
  \begin{equation*}
    \chi(\cat OGp) = \frac{|G_p|}{|G|}
  \end{equation*}
  is the density of the $p$-singular elements in $G$. 
\end{prop}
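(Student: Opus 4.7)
The plan is to verify the coweighting condition directly for the proposed function $k_K$, and then derive the Euler characteristic formula by summing these values and invoking Lemma~\ref{lemma:Gp}. Throughout, I will use the standard fact $|\cat OGp(K,H)| = |N_G(K,H)|/|H|$, where $N_G(K,H) = \{g \in G : K^g \leq H\}$.

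For the coweighting condition, fix a $p$-subgroup $H$ of $G$. Since $k_K$ vanishes off the cyclic $p$-subgroups, the sum $\sum_K k_K |\cat OGp(K,H)|$ collapses to a contribution of $|G|^{-1}\cdot|G|/|H| = 1/|H|$ from $K = 1$ plus
$$\frac{1-p^{-1}}{|G||H|}\sum_{K \text{ cyclic nontrivial}} |K|\cdot|N_G(K,H)|.$$
The crucial step is the double-counting identity
$$\sum_{K \text{ cyclic nontrivial}} |K|\cdot |N_G(K,H)| = |G|\sum_{K' \leq H,\, K' \text{ cyclic nontrivial}} |K'|,$$
which follows by writing $|N_G(K,H)| = |\{g \in G : K^g \leq H\}|$, swapping the order of summation, and noting that $K \mapsto K^g$ is a bijection on cyclic $p$-subgroups preserving $|K|$. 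Because $H$ is a $p$-group, every element of $H$ is $p$-singular, so $|H_p| = |H|$ and Lemma~\ref{lemma:Gp} applied to $H$ gives $\sum_{K' \leq H,\,\text{cyclic nontrivial}}(1-p^{-1})|K'| = |H|-1$. Substituting back yields
$$\sum_K k_K |\cat OGp(K,H)| = \frac{1}{|G||H|}\bigl(|G| + |G|(|H|-1)\bigr) = 1,$$
as required.

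The Euler characteristic formula is then immediate: by Leinster \cite[Definition~1.10]{leinster08}, $\chi(\cat OGp) = \sum_K k_K$, which equals
$$\frac{1}{|G|}\Bigl(1 + \sum_{K \text{ cyclic nontrivial}}(1-p^{-1})|K|\Bigr) = \frac{|G_p|}{|G|}$$
by Lemma~\ref{lemma:Gp} applied to $G$ itself.

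The main (and only) subtlety is the double-counting swap: it leverages the conjugation-invariance of the weight $(1-p^{-1})|K|$ to reduce a global sum over all cyclic $p$-subgroups of $G$ with transporter multiplicities to a local sum inside $H$, where Lemma~\ref{lemma:Gp} applies verbatim. Once this swap is spotted, verification of the coweighting and identification of $\chi(\cat OGp)$ become a direct application of the same lemma to $H$ and to $G$, respectively.
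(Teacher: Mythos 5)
Your proof is correct, and it takes a genuinely different route from the paper's. The paper derives the coweighting by first expressing $k_K$ via the general slice-category formula $k_K = -\rchi(\cat OGp//K)/|G:K|$ from \cite[Theorem~3.7]{gm:2012}, then building category equivalences $\cat OK{[1,K)} \simeq \cat OGp//K$, reducing via an adjunction to the Frattini quotient $V = K/\Phi(K)$, and finally reading off the answer from Corollary~\ref{cor:cowtOV}; Lemma~\ref{lemma:Gp} is invoked only once at the end to sum the coweighting. You instead verify the coweighting axiom directly: you split off the $K=1$ term, perform a double-counting swap $\sum_K |K|\,|N_G(K,H)| = |G|\sum_{K' \leq H}|K'|$ using conjugation-invariance of $|K|$, and then apply Lemma~\ref{lemma:Gp} locally inside the $p$-group $H$ (where $|H_p| = |H|$). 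This bypasses the slice-category machinery and the Frattini adjunction entirely. The trade-off is informative: the paper's argument explains structurally why the coweighting is supported on cyclic subgroups (it detects one-dimensionality of the Frattini quotient), whereas your argument is shorter and more self-contained but offers no conceptual derivation of the formula --- it simply checks that the stated formula works. Both uses of Lemma~\ref{lemma:Gp} in your version (once for $H$, once for $G$) are correct, the swap is valid since $K \mapsto K^g$ preserves order and cyclicity, and the final Euler characteristic computation coincides with the paper's.
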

\begin{proof}
  The co\we\ for $\cat OGp$ is the function
  \begin{equation*}
    k_K = \frac{-\rchi(\cat OGp//K)}{|[K]| |\cat OG{}(K)|} =
    \frac{-\rchi(\cat OGp//K)}{|G:K|}
  \end{equation*}
  defined for any $p$-subgroup $K$ of $G$ \cite[Theorem 3.7]{gm:2012}.
  There are equivalences of categories
    \begin{equation*}
      i_K \colon \cat OK{} \to \cat O{G}{}/K, \qquad
      i_K \colon \cat OK{[1,K)} \to \cat O{G}{}//K
    \end{equation*}
    On objects, $Hi_K = 1K \in N_G(H,K)/K = \cat OG{}(H,K)$, for any
    subgroup $H$ of $K$. We observe that there is an obvious
    identification of \m\ sets,
    \begin{equation*}
      \cat OK{}(H_1,H_2) = (\cat OG{}/K)(H_1i_K,H_2i_K)
    \end{equation*}
    and we use this identification to define $i_K$ on \m\ sets. By
    construction, $i_K$ is full and faithful, and as it is also
    essentially surjective on objects, $i_K$ is an equivalence of
    categories. Finally, we have that
    \begin{equation*}
       \chi(\cat OK{[1,K)}) = \chi(\cat OV{[1,V)})  
    \end{equation*}
    where $V=K/\Phi(K)$ is the Frattini quotient of $K$: There is, by
    the the proof of \cite[Lemma 5.1.(c)]{gm:2012}, an adjunction
    between these two categories so that they must have the same \Euc
    s \cite[Proposition 2.4]{leinster08}. To arrive at the description
    of the co\we\ we recall that the $p$-group $K$ is cyclic if and
    only if its Frattini quotient is $1$-dimensional \cite[Corollary
    5.1.2]{gorenstein68} and use Corollary~\ref{cor:cowtOV}. Now the
    \Euc, the sum of the values of the co\we, can be computed thanks
    to the counting formula of Lemma~\ref{lemma:Gp}.
\end{proof}

\begin{prop}\cite[Theorem 1.3.(4)]{jmm_mwj:2010}\label{prop:weOGp}
  The function
  \begin{equation*}
    k^H 
       = \frac{-\rchi(\cat S{\cat OG{}(H)}{p+*})}{|G:H|}
  \end{equation*}
  is a \we\ for $\cat OGp$ and the \Euc\ is
  \begin{equation*}
    \chi(\cat OGp) 
    = \sum_{H} 
    \frac{-\rchi(\cat S{\cat OG{}(H)}{p+*})}{|G:H|}
    = \sum_{[H]} 
    \frac{-\rchi(\cat S{\cat OG{}(H)}{p+*})}{|\cat OG{}(H)|}
  \end{equation*}
  where the first sum is over the set of $p$-subgroups $H$ of $G$ and
  the second one over the set of conjugacy classes of such subgroups.
\end{prop}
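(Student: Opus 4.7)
The plan is to apply the general weighting formula for finite EI-categories from [gm:2012, Theorem 3.7], which gives
$$k^H = \frac{-\rchi(H // \cat OGp)}{|[H]| \cdot |\cat OG{}(H)|} = \frac{-\rchi(H // \cat OGp)}{|G : H|},$$
since $|[H]| \cdot |\cat OG{}(H)| = |G : N_G(H)| \cdot |N_G(H):H| = |G:H|$. The task thus reduces to identifying the reduced \Euc\ of the strict coslice $H // \cat OGp$ with that of the Brown poset $\cat S{\cat OG{}(H)}{p+*}$ of the Weyl group $\cat OG{}(H) = N_G(H)/H$.

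The key step is to establish an equivalence of categories between $H // \cat OGp$ and the subposet $H // \cat SGp$ of $p$-subgroups strictly containing $H$. An object of $H // \cat OGp$ is a pair $(K, aK)$ with $aK \in N_G(H,K)/K$ and $H^a \lneqq K$, equivalently $H \lneqq K^{a^{-1}}$. Unwinding composition in $\cat OGp$ shows that a morphism $(K_1, a_1 K_1) \to (K_2, a_2 K_2)$ in $H // \cat OGp$ exists if and only if $K_1^{a_1^{-1}} \leq K_2^{a_2^{-1}}$, in which case it is unique. Hence the assignment $(K, aK) \mapsto K^{a^{-1}}$ is both essentially surjective -- every $p$-subgroup $L \gneqq H$ is hit by $(L, eL)$ -- and fully faithful onto $H // \cat SGp$. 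Consequently $\chi(H // \cat OGp) = \chi(H // \cat SGp)$, and Proposition~\ref{prop:SGpwecowe} then provides $\rchi(H // \cat SGp) = \rchi(\cat S{\cat OG{}(H)}{p+*})$.

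Combining these steps yields the claimed formula for $k^H$. The \Euc\ formula $\chi(\cat OGp) = \sum_H k^H$ is then the defining property of a weighting, and grouping the sum by $G$-conjugacy class (using $|[H]| = |G : N_G(H)|$ to convert $|G:H|$ into $|\cat OG{}(H)|$) yields the alternative form indexed over $[\cat OGp]$. The main obstacle is establishing the equivalence of categories in step two: one must carefully verify that the functor $(K, aK) \mapsto K^{a^{-1}}$ is well-defined on morphisms and respects composition, which in the orbit category is given by multiplication of coset representatives in transporter sets.
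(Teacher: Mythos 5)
Your proof is correct, but it takes a genuinely different route from the one the paper actually uses. The paper's proof is purely arithmetical: it starts from the known weighting $k^{[K]} = -\rchi(\cat S{\cat OG{}(K)}{p+*})$ for $\cat SGp/G$ (Propositions~\ref{prop:SGpwecowe}, \ref{prop:weSGprad} and Corollary~\ref{cor:uweSA}), and uses the counting identity $|\cat OG{}(H,K)| = \cat SG{}(H,[K])\,|\cat OG{}(K)|$ from Lemma~\ref{lemma:SGH[K]} to convert the defining equation $\sum_{[K]} \cat SG{}(H,[K])\,k^{[K]} = 1$ into the defining equation $\sum_{[K]} |\cat OG{}(H,K)|\,\bigl(k^{[K]}/|\cat OG{}(K)|\bigr) = 1$ for a weighting on $[\cat OGp]$, whence the formula for $k^H$ on $\cat OGp$. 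No category comparison is needed; only manipulation of transporter set cardinalities.

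Your approach instead invokes the general weighting formula $k^H = -\rchi(H // \cat OGp)/(|[H]|\,|\cat OG{}(H)|)$ from \cite[Theorem~3.7]{gm:2012} and then exhibits an explicit isomorphism of thin categories $H // \cat OGp \cong H // \cat SGp$ (via $(K,aK) \mapsto K^{a^{-1}}$), before applying Proposition~\ref{prop:SGpwecowe} to convert $\rchi(H // \cat SGp)$ into $\rchi(\cat S{\cat OG{}(H)}{p+*})$. Your verification that the functor is well-defined and fully faithful is accurate: a morphism $bK_2$ in $\cat OGp$ with $a_1b K_2 = a_2K_2$ forces $b \in a_1^{-1}a_2K_2$, and then $K_1^b \leq K_2$ is equivalent to $K_1^{a_1^{-1}} \leq K_2^{a_2^{-1}}$, so the coslice is isomorphic to the poset $H // \cat SGp$ of $p$-overgroups of $H$. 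What your route buys is a more conceptual picture (the coslice over the orbit category is literally a poset of subgroups) and avoids the auxiliary counting Lemma~\ref{lemma:SGH[K]}; what the paper's route buys is that it requires only elementary index computations and no verification of functoriality. The two are roughly equal in length and both are valid.
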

\begin{proof}
Lemma~\ref{lemma:SGH[K]} implies that the 
\we s for $[\cat OGp]$ and $\cat OGp$ are
\begin{equation*}
   \frac{k^{[K]}}{|\cat OG{}(K)|}, \qquad
  \frac{k^{[K]}}{|\cat OG{}(K)|}\frac{1}{|G:N_G(K)|} = \frac{k^{[K]}}{|G:K|}
\end{equation*}
where $k^{[K]} = -\rchi(\cat S{\cat OGp(K)}{p+*})$ is the \we\ for
$\cat SGp/G$.
\end{proof}

By comparing the two expressions from Propositions~\ref{prop:coweOGp}
and \ref{prop:weOGp} for the \Euc\ of $\cat OGp$ we obtain the global
formula
\begin{equation}
  \label{eq:globalnoneq}
   \sum_{H \in \cat SG{p+\mathrm{rad}}}
  -\rchi(\cat S{\cat OG{}(H)}{p+*})|H| = |G_p|
\end{equation}
For our purposes it will be convenient to isolate the contribution
from the trivial subgroup and rewrite Equation~\eqref{eq:globalnoneq} on
the form
\begin{equation}
  \label{eq:chiOG}
    |G_p| + 
    \rchi(\cat SG{p+*}) + \sum_{[H] \neq 1} 
   \frac{\rchi(\cat S{\cat OG{}(H)}{p+*})}{|\cat OG{}(H)|_p}
   \frac{|G|}{|\cat OG{}(H)|_{p'}} = 0
\end{equation}
that will allow us to verify that the theorems of Frobenius and Brown
are equivalent.

\begin{thm}\label{thm:frobeniusbrown}
  Given relation \eqref{eq:chiOG}, Frobenius'
  Theorem~\ref{thm:frobenius} and Brown's Theorem~\ref{thm:brown} are
  equivalent.
\end{thm}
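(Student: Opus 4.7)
The plan is a straightforward simultaneous induction on $|G|$ based on the affine relation~\eqref{eq:chiOG}. Both implications have the same structure, so I will set up a bookkeeping lemma and then read off each direction.

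First, I would observe that for any nonidentity $p$-subgroup $H \leq G$, the group $\cat OG{}(H)=N_G(H)/H$ has order strictly less than $|G|$; moreover $|\cat OG{}(H)|_{p'}$ is a $p'$-number, so $|G|_p$ divides $|G|/|\cat OG{}(H)|_{p'}$. Hence if either of the two theorems is already known for all proper sub-quotients of $G$ (so in particular for each $\cat OG{}(H)$ with $H\neq 1$), then the ratio
\begin{equation*}
  \frac{\rchi(\cat S{\cat OG{}(H)}{p+*})}{|\cat OG{}(H)|_p}
\end{equation*}
is an integer by Brown's theorem for $\cat OG{}(H)$, and therefore each term
\begin{equation*}
  \frac{\rchi(\cat S{\cat OG{}(H)}{p+*})}{|\cat OG{}(H)|_p}\cdot
  \frac{|G|}{|\cat OG{}(H)|_{p'}}
\end{equation*}
is divisible by $|G|_p$. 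Consequently the whole ``error sum'' $\sum_{[H]\neq 1}(\cdots)$ appearing in~\eqref{eq:chiOG} lies in $|G|_p\Z$.

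Next I would feed this into~\eqref{eq:chiOG} to obtain the congruence
\begin{equation*}
  |G_p| + \rchi(\cat SG{p+*}) \equiv 0 \pmod{|G|_p},
\end{equation*}
from which it is immediate that $|G|_p \mid |G_p|$ if and only if $|G|_p \mid \rchi(\cat SG{p+*})$. This is the heart of the argument. The induction is then routine: for the forward direction assume Frobenius' theorem holds in general and proceed by induction on $|G|$ to conclude Brown's theorem (the inductive hypothesis supplies Brown for all $\cat OG{}(H)$ with $H\neq 1$, so the above congruence gives Brown for $G$). The reverse direction is entirely symmetric, with the roles of $|G_p|$ and $\rchi(\cat SG{p+*})$ exchanged. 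The base case ($G=1$) is trivial, and the passage to $\cat OG{}(H)$ is legitimate because $H\neq 1$ strictly decreases the order.

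The only substantive point—really the only thing to check—is the divisibility accounting in the second paragraph: one must be sure that the two auxiliary factors $|\cat OG{}(H)|_p$ in the denominator and $|\cat OG{}(H)|_{p'}$ in $|G|/|\cat OG{}(H)|_{p'}$ combine correctly so that each summand is divisible by $|G|_p$. This is the expected obstacle, but it is resolved by the trivial observations that $|\cat OG{}(H)|_{p'}$ is coprime to $p$ and that $|G|_p \mid |G|$, so no subtlety arises.
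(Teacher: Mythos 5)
Your proof is correct and follows essentially the same route as the paper: use relation~\eqref{eq:chiOG} to show that, once $\rchi(\cat S{\cat OG{}(H)}{p+*})/|\cat OG{}(H)|_p$ is known to be an integer for all nonidentity $H$, each term of the error sum is divisible by $|G|_p$, yielding the congruence $|G_p|\equiv-\rchi(\cat SG{p+*})\pmod{|G|_p}$, from which the equivalence of Frobenius and Brown falls out. One small point of precision: your bookkeeping lemma claims the integrality follows ``if either of the two theorems is already known for all proper sub-quotients,'' but it is specifically Brown's theorem for $\cat OG{}(H)$ that is needed — in the forward direction this is supplied by the inductive hypothesis exactly as you say, while in the reverse direction it is available outright from the global assumption (so no induction is actually needed there, and the two directions are not quite symmetric in the way your closing remark suggests); your actual applications nonetheless invoke the correct hypothesis, so the argument is sound.
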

\begin{proof}
  Assume first that Frobenius' Theorem~\ref{thm:frobenius} holds.  In
  Equation~\eqref{eq:chiOG}, we may assume that
  \begin{itemize}
  \item $\rchi(\cat S{\cat OG{}(H)}{p+*})/ |\cat OG{}(H)|_p$ is an integer
  when $H$ is nontrivial (as part of an inductional argument)
\item $|G|/|\cat OG{}(H)|_{p'}$ is an integer divisible by $|G|_p$ (as
  $|\cat OG{}(H)|$ divides $|G|$)
  \end{itemize} 
  Thus every term in the sum is divisible by $|G|_p$ and so is $|G_p|$
  by assumption.  We conclude that $\rchi(\cat SG{p+*})$ is divisible
  by $|G|_p$. This is Brown's Theorem~\ref{thm:brown}.

  Next assume that Brown's Theorem~\ref{thm:brown} holds.  In
  Equation~\eqref{eq:chiOG}
  \begin{itemize}
  \item $|G|/|\cat OG{}(H)|_{p'}$ is an integer divisible by $|G|_p$
\item $\rchi(\cat S{\cat OG{}(H)}{p+*})/|\cat OG{}(H)|_p$ is an integer
\item $\rchi(\cat SG{p+*})$ is divisible by $|G|_p$
\end{itemize}  
and thus $|G|_p$ divides for $|G_p|$. This is Frobenius'
Theorem~\ref{thm:frobenius}.
\end{proof}

See \cite[Theorem~6.3]{HIO89} for one direction of
Theorem~\ref{thm:frobeniusbrown} in an even more general context.

\begin{exmp}\label{exmp:chevalley}
  Let $K \in \mathit{Lie}(p)$ be a finite group of Lie type in
  defining characteristic $p$. Then $K = O^{p'}C_{\bar K}(\sigma)$ is
  the subgroup of $C_{\bar K}(\sigma)$ generated by its $p$-singular
  elements where $(\bar K,\sigma)$ is a $\sigma$-setup for $K$
  \cite[Definitions 2.1.1--2.2.2]{GLSIII}.  We shall assume that $K =
  \Sigma(q)$ is an untwisted group of Lie type \cite[Definition
  2.2.4]{GLSIII} and investigate Equations~\eqref{eq:globalnoneq} and
  \eqref{eq:affine} in this case.

  Let $(\Sigma,\Pi)$ be the root system for the simple algebraic group
  $\bar K$ over $\bar \F_p$. Write $r(\Sigma)$ for the rank of
  $\Sigma$ and $\Sigma^+$ for the set of positive roots.  
  For any subset $J$ of the set $\Pi$ of
  fundamental roots, there is an associated parabolic subgroup $P_J$
  \cite[Definition 2.6.4]{GLSIII} and $P_J = U_J \rtimes L_J$, $U_J =
  O_p(P_J)$, $P_J = N_K(U_J)$ where $L_J$ is the Levi complement
  \cite[Theorem 2.6.5]{GLSIII}.

  
  We first note that
  \begin{equation}\label{eq:eulerKp}
     -\rchi(\cat S{\Sigma(q)}{p+*}) = 
     (-1)^{r(\Sigma)} q^{|\Sigma^+|}  
  \end{equation}
  because $\cat S{\Sigma(q)}{p+*}$ is homotopy equivalent to the
  building of $\Sigma(q)$ \cite[Theorem 3.1]{quillen78} which, by the
  Solomon--Tits theorem \cite[Theorem 4.73]{abramenko_brown2008}, has
  the homotopy type of a wedge of $q^{|\Sigma^+|}$ spheres of
  dimension $r(\Sigma)-1$.  Since the Borel--Tits theorem tells us
  that any $\Sigma(q)$-radical $p$-subgroup of $\Sigma(q)$ is
  conjugate to precisely one of the subgroups $U_J$
  \cite[Theorem~3.1.3, Corollary~3.1.5]{GLSIII}, the identity
  \begin{equation*}
    |\Sigma(q)_p| = 
    \sum_{J \subset \Pi} -\rchi(\cat S{L_J}{p+*}) |U_J|  |\Sigma(q) : P_J| 
  \end{equation*}
  is the manifestation of Equation~\eqref{eq:globalnoneq} for
  $G=\Sigma(q)$. Here,
  \begin{equation*}
    -\rchi(\cat S{L_J}{p+*})|U_J| = (-1)^{|J|} |L_J|_p |U_J| =
    (-1)^{|J|} |P_J|_p = (-1)^{|J|}q^{|\Sigma^+|}  
  \end{equation*}
  where $\Sigma^+$ is the set of positive roots of $\Sigma$.  This
  follows from Equation~\eqref{eq:eulerKp} because $M_J=O^{r'}(L_J)$
  is a central product of groups from $\mathit{Lie}(p)$ \cite[Theorem
  2.6.5.(f)]{GLSIII} and $-\rchi(\cat S-{p+*})$ is a multiplicative
  function \cite[Theorem 6.1]{jmm_mwj:2010}. Since also,
  \begin{equation*}
     |\Sigma(q) : P_J | = \frac{|P_{\Pi} : P_\emptyset|}{|P_J : P_\emptyset|}
     = \frac{\KB(\Sigma,\Pi)}{\KB(\Sigma,J)}
  \end{equation*}
  we conclude that Equation~\eqref{eq:globalnoneq} for $\Sigma(q)$ has
  the form
 \begin{equation}
   \label{eq:K2}
     |\Sigma(q)_p| = q^{|\Sigma^+|}     
    \sum_{J \subset \Pi} (-1)^{|J|} \frac{\KB(\Sigma,\Pi)}{\KB(\Sigma,J)}
 \end{equation}
 Lemma~\ref{lemma:KBJ} now implies that $|\Sigma(q)_p| = q^{|\Sigma|}
 = |\Sigma(q)|_p^2$.  (See below for the notation used here.)
Similar arguments show that 
Equation~\eqref{eq:affine} with $G=\Sigma(q)$ specializes to
\begin{equation} \label{eq:K1}
  \sum_{J \subset \Pi}  (-1)^{|J|}
    \frac{\KB(\Sigma,\Pi)}{\KB(\Sigma,J)} q^{|\Sigma^+_J|} = 1
\end{equation}
where we used that $-\rchi(\cat S{L_J}{p+*}) =
(-1)^{|J|}q^{|\Sigma^+_J|}$.
\end{exmp}

Let $(\Sigma,\Pi)$ be a reduced crystallographic root system with
fundamental roots $\Pi$. Put $\KB(\Sigma) = \prod [d]$ where $d$ runs
through the set of degrees for the Weyl group $W(\Sigma)$ \cite[\S9.3,
Proposition~10.2.5]{carter:lie} and $[m] \in \Z[X]$, $m \geq 1$, is
the polynomial
\begin{equation*}
  [m] = \frac{X^m-1}{X-1} = 1+X+\cdots+X^{m-1}
\end{equation*}
Thus $\KB(\Sigma) = |K : B|$ where $B$ is the Borel subgroup of $K =
\Sigma(q)$ and $\KB(A_m) = [2] \cdots [m+1]$, for instance.  More
generally, for any set $J \subset \Pi(\Sigma)$ of fundamental roots,
set $\KB(\Sigma,J) = \prod \KB(\Sigma_i)$ when the orthogonal
decomposition of the root system $\Sigma_J = \Sigma \cap \R J$ spanned
by the fundamental roots in $J$ is $\bigcup \Sigma_i$ for irreducible
root systems $\Sigma_i$ \cite[Definition~1.8.4]{GLSIII}.  Thus
$\KB(\Sigma,\Pi) = \KB(\Sigma)$. We let $\KB(\Sigma,\emptyset)=1$ by
convention.

\begin{lemma}\label{lemma:KBJ}\cite[Theorem~9.4.5]{carter:lie}
  For any  reduced crystallographic root system $\Sigma$
  \begin{equation*}
    \sum_{J \subset \Pi}(-1)^{|J|} 
    \frac{\KB(\Sigma,\Pi)}{\KB(\Sigma,J)} = 
    X^{|\Sigma^+|}
  \end{equation*}
\end{lemma}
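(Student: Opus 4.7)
The plan is to interpret $\KB(\Sigma,\Pi)/\KB(\Sigma,J)$ as a Poincar\'e polynomial of the coset space $W/W_J$ of the Weyl group $W=W(\Sigma)$, and then to carry out a one-line inclusion--exclusion on descent sets. First, by the standard product formula for the Poincar\'e polynomial of a finite Coxeter group,
\begin{equation*}
   \KB(\Sigma,J) = \sum_{w \in W_J} X^{\ell(w)},
\end{equation*}
where $W_J \leq W$ is the parabolic subgroup generated by the simple reflections indexed by $J$ and $\ell$ is the Coxeter length function. Because every coset $wW_J$ contains a unique minimal-length representative $w^J \in W^J$ and $\ell(w^J u) = \ell(w^J)+\ell(u)$ for all $u \in W_J$, dividing yields
\begin{equation*}
   \frac{\KB(\Sigma,\Pi)}{\KB(\Sigma,J)}
   = \sum_{w \in W^J} X^{\ell(w)}, \qquad
   W^J = \{w \in W \mid \ell(ws)>\ell(w)\ \forall s\in J\}.
\end{equation*}
Note that $w \in W^J$ is equivalent to $J \cap D(w) = \emptyset$, where $D(w) = \{s \in \Pi \mid \ell(ws)<\ell(w)\}$ is the right descent set of $w$.

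Given this reformulation, I would interchange the order of summation:
\begin{equation*}
  \sum_{J \subset \Pi}(-1)^{|J|}\frac{\KB(\Sigma,\Pi)}{\KB(\Sigma,J)}
  = \sum_{w \in W} X^{\ell(w)} \sum_{J \subset \Pi \setminus D(w)} (-1)^{|J|}.
\end{equation*}
The inner sum vanishes whenever $\Pi \setminus D(w) \neq \emptyset$, and equals $1$ otherwise. The condition $D(w) = \Pi$ characterizes the longest element $w_0 \in W$, and $\ell(w_0) = |\Sigma^+|$. Hence the whole sum collapses to $X^{|\Sigma^+|}$, as required.

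The only non-trivial ingredient is the Poincar\'e-polynomial identity $\KB(\Sigma,J) = \sum_{w \in W_J} X^{\ell(w)}$ together with the length-additive decomposition $w = w^J w_J$; both are classical and are exactly the content of the cited Theorem~9.4.5 in \cite{carter:lie}. There is no real obstacle to overcome: once the length-generating-function viewpoint is adopted, the claim is a purely combinatorial sign cancellation over subsets of the descent-set complement, selecting the unique element of $W$ with full descent set.
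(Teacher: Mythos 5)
Your argument is correct, and it is essentially the classical proof of this identity (due to Solomon), which is what the paper's cited reference \cite[Theorem~9.4.5]{carter:lie} proves; the paper itself offers no proof and merely cites Carter. Your steps --- the Poincar\'e polynomial identity $\KB(\Sigma,J)=\sum_{w\in W_J}X^{\ell(w)}$ for the parabolic subgroup $W_J$ (which rests on the standard fact that the Coxeter length function of $W$ restricts to that of $W_J$, and that $\KB(\Sigma,J)$ factors according to the irreducible components of $\Sigma_J$), the length-additive decomposition $w=w^J w_J$ with $w^J\in W^J$, the reformulation $W^J=\{w\mid D(w)\cap J=\emptyset\}$ via right descent sets, and the interchange of summation producing $\sum_{J\subset\Pi\setminus D(w)}(-1)^{|J|}$ which vanishes unless $D(w)=\Pi$, i.e.\ $w=w_0$, with $\ell(w_0)=|\Sigma^+|$ --- all hold and combine cleanly. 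There is no gap; this is the same descent-set inclusion--exclusion that underlies the cited reference, so your write-up simply makes explicit what the paper delegates to Carter.
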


In the concrete case where the root system $\Sigma = A_{m-1}$,
Lemma~\ref{lemma:KBJ} and Equation~\eqref{eq:K1} give two prototypical
combinatorial identities involving Gaussian multinomial coefficients
\cite[\S1.7]{stanley97}
\begin{equation*}
  \sum_{(m_1,\ldots,m_k) \in \OP(m)}
  (-1)^k \binom{[m]}{[m_1],\cdots,[m_k]} = X^{\binom m2}, \qquad
   \sum_{(m_1,\ldots,m_k) \in \OP(m)}
  (-1)^k  \binom{[m]}{[m_1],\cdots,[m_k]}X^{\sum \binom{m_i}2} = 1 
\end{equation*}
where the sums run over the set $\OP(m)$ of the $2^{m-1}$ ordered
partitions of $m$. 

Experiments indicate that Equation~\eqref{eq:eulerKp} holds also
  for Steinberg and Suzuki--Ree groups when we replace $r(\Sigma)$
  with the twisted rank $r(\widetilde \Sigma)$ \cite[Proposition
  2.3.2, Remark 2.3.3]{GLSIII}.



  \begin{exmp}\label{exmp:eucSigma}
    The exponential generating function for the integer sequence $n
    \to |(\Sigma_n)_p|$ counting the number of $p$-singular
    permutations of $n$ things is \cite[Example 5.2.10]{stanley99}
    is the Artin--Hasse exponential
    \begin{equation*}
      \sum_{n=1}^\infty  |(\Sigma_n)_p| \frac{x^n}{n!} =
      \sum_{n=1}^\infty  \chi(\cat O{\Sigma_n}p)x^n =
      \exp ( x + \frac{x^p}{p} + \frac{x^{p^2}}{p^2}+ \cdots + 
      \frac{x^{p^m}}{p^m} + \cdots )
    \end{equation*}
    The sequence $|(\Sigma_n)_2|$  begins with
    $1, 2, 4, 16, 56, 256, 1072, 11264, 78976, 672256$ for $1 \leq n
    \leq 10$ (OEIS \href{https://oeis.org/A005388}{A005388}).

    The $p$-radical subgroups of the symmetric group are described in
    \cite[\S2]{AlpFong} and \cite[Lemma~2.2]{michler&olsson91} as
    follows: Let $r=(c_1,c_2,\ldots)$ be a finite sequence of natural
    numbers $c_i \geq 0$. Define the degree at $p$ of the sequence $r$
    to be
  \begin{equation*}
    \deg_p r = p^{\sum r}
  \end{equation*}
  The set $\cat S{\Sigma_n}{p+*+\mathrm{rad}}/\Sigma_n$ of conjugacy
  classes of nonidentity $\Sigma_n$-radical $p$-subgroups is in
  bijective correspondence with the set of nonempty multisets
  $R=\{r_1,r_2,\ldots\}$ of finite sequences $r_i$ such that $\sum_i
  \deg_pr_i \leq n$. There are extra restrictions, somewhat ignored in
  \cite{AlpFong, michler&olsson91}, in case $p=2$ or $p=3$ since
  $O_2(\Sigma_n)$ is nontrivial (only) for $n=2,4$ and $O_3(\Sigma_n)$
  is nontrivial (only) for $n=3$.  In case $p=2$ we must also demand
  that
  \begin{itemize}
  \item $n - \sum_i \deg_pr_i$ does not equal $2$ or $4$
  \item none of the sequences $(1),(1,1),\ldots$ has multiplicity $2$
    or $4$ in the multiset $R$
  \end{itemize}
  and when $p=3$ we must also demand that
   \begin{itemize} 
  \item $n - \sum_i \deg_pr_i$ does not equal $3$
  \item none of the sequences $(1),(1,1),\ldots$ has multiplicity $3$
   in the multiset $R$
  \end{itemize}
  The correspondence of \cite[\S2]{AlpFong} sends the multiset
  $R=\{r_1,r_2,\ldots\}$ to the $p$-radical subgroup $A_R = 1 \times A_{r_1}
  \times A_{r_2} \times \cdots$ where $1$ is the trivial subgroup of
  the symmetric group on $n - \sum_i \deg_pr_i$ elements and the
  $A_{r_i}$s are certain (iterated) wreath products of elementary
  abelian subgroups. We can now fill out the table
%
\begin{center}  
\begin{tabular}[t]{>{$}c<{$} | *{3}{>{$}c<{$}} >{$}r<{$} >{$}c<{$}}
R & A_R  & |A_R|  & N_{\Sigma_n}(A_R)/A_R  & 
k^{[A_R]} 
& |\Sigma_n : N_{\Sigma_n}(A_R)|    \\ \hline \noalign{\smallskip}
\{(1,1,1)\} & A_1 \wr A_1 \wr A_1 & 128 & \GL 1{\F_2} \times \GL
1{\F_2} \times \GL 1{\F_2} &
1 & 315 \\
\{(1,2)\} & A_1 \wr A_2 & 64 & \GL 1{\F_2} \times \GL 2{\F_2} &
-2 & 105 \\
\{(2,1)\} & A_2 \wr A_1 & 32 & \GL 2{\F_2} \times \GL 1{\F_2} &
-2 & 210 \\
\{(2),(1,1)\} & A_2 \times A_1 \wr A_1 & 32 & 
\GL 2{\F_2} \times \GL 1{\F_2} \times \GL 1{\F_2}  &
-2 & 210 \\
\{(2),(2)\} & A_2 \times A_2 & 16 & \GL 2{\F_2} \wr \Sigma_2 &
16 & 35 \\
\{(3)\} & A_3 & 8 & \GL 3{\F_2} & 8 & 30 \\
\{(1)\} & 1_6 \times A_{1}  & 2  & \Sigma_6 \times \GL 1{\F_2} &
16 & 28 \\
\emptyset & 1_8 & 1 & \Sigma_8 & -512 & 1
\end{tabular}
\end{center}
of $2$-radical subgroups of $\Sigma_8$.  With computer assistance it
is possible to determine the table $(\cat S{\Sigma_8}{}(A_R,[A_S]))$
and then read off the \we\ $k^{[K]}$ for $\cat
S{\Sigma_8}{2+\mathrm{rad}}/\Sigma_8$ (Definition~\ref{defn:wecoweSA})
as the solution to the linear equation~\eqref{eq:lineqweS}
\begin{equation*}
    \begin{pmatrix}
      1 &   0  &   0  &  0  &  0  &  0  &  0  &  0 \\
  3  &  1  &  0  &  0  &  0  &  0  &  0  &  0 \\
  3  &  0  &  1  &  0  &  0  &  0  &  0  &  0 \\
  3  &  0  &  0  &  1  &  0  &  0  &  0  &  0 \\
  9  &  0  &  6  &  6  &  1  &  0  &  0  &  0 \\
 21  &  7  &  0  &  7  &  0  &  1  &  0  &  0 \\
 45  & 15  & 15  &  0  &  0  &  0  &  1  &  0 \\
 315  &105  &210  &210  & 35  & 30  & 28  &  1 
    \end{pmatrix}
    \begin{pmatrix}
         1  \\ -2 \\  -2 \\   -2 \\  16 \\   8 \\  16 \\ -512
    \end{pmatrix} =
    \begin{pmatrix}
      1 \\ 1 \\ 1 \\ 1 \\ 1 \\ 1 \\ 1 \\ 1
    \end{pmatrix}
  \end{equation*}
  In particular, $-\rchi(\cat S{\Sigma_8}{2+*}) = -512$ and
  $-\rchi(\cat S {\GL 3{\F_2}}{2+*})=8$ (agreeing with
  Equation~\eqref{eq:eulerKp}) and
\begin{equation*}
   |\Sigma_8| |\chi(\cat O{\Sigma_8}2)| =
    \sum_{[A_R] \geq 1} k^{[A_R]} |\Sigma_8 : N_{\Sigma_8}(A_R)|
    |A_R| = 11264
\end{equation*}
correctly counts the number $|(\Sigma_8)_2|$ of $2$-singular elements
in $\Sigma_8$ (Propositions~\ref{prop:SGpwecowe}, \ref{prop:weOGp}).
 
  It is, however, still unclear how to continue the computer generated
  sequence
    \begin{center}
       \begin{tabular}[t]{>{$}c<{$} | *{13}{>{$}c<{$}} }
       n  & 3 & 4 & 5 & 6 & 7 & 8 & 9 & 10 & 11 & 12 & 13 & 14 & 15
       \\ \hline \noalign{\smallskip}
       \chi(\cat S{\Sigma_n}{2+*})  & 3 & 1 & -15 & -15 & 161
       & 513 & -639 & -7935 & -20735 & 235521 & 3244033 & 2232321
       & -190068735
       \end{tabular}
    \end{center}
    of \Euc s of the Brown posets at $p=2$ for the symmetric groups.
    It is possible that the methods of \cite{pfeiffer97} can be used
    to extend the sequence a little longer.
    The generating functions $\sum_{n=1}^\infty \chi(\cat
    S{\Sigma_n}{p+*})x^n$ are unknown.
\end{exmp}

\subsection{\Euc s of centralized subcategories of $\cat OGp$}
\label{sec:chiCOGA}

Let $A$ be a group acting on $G$ from the right.  The centralized
subcategory $C_{\cat OG{}}(A)$ of $\cat OG{}$ has objects, \m\ sets,
and auto\m\ groups
\begin{align*}
  &\Ob{C_{\cat OG{}}(A)} = C_{\Ob{\cat OG{}}}(A) =
\{ K \leq G \mid [K,A] \leq K \} \\
 &C_{\cat OG{}}(A)(H,K) = C_{\cat OG{}(H,K)}(A) 
  = \{g \in N_G(H,K) \mid [g,A] \leq K \}/K \\
  &C_{\cat OG{}(A)}(K) =
 C_{\cat OG{}(K)}(A) =
\{ g \in N_G(K) \mid [g,A] \leq K \}/K
\end{align*}
The category $\cat O{(G,A)}{}$ is defined to be the subcategory of the
centralized subcategory with the same objects but with
\m\ sets, and auto\m\ groups
\begin{equation*}
 \cat O{(G,A)}{}(H,K) = C_{N_G(H,K)}(A)/C_K(A), \qquad
 \cat O{(G,A)}{}(H) = C_{N_G(H)}(A)/C_H(A)
\end{equation*}
and with composition $C_{N_G(H,K)}(A)/C_K(A) \times
C_{N_G(K,L)}(A)/C_L(A) \to C_{N_G(H,L)}(A)/C_L(A)$ induced by
composition in $G$. This is well-defined for if $k \in C_K(A)$, $y \in
C_{N_G(K,L)}(A)$ then $k^{ya} = k^{ay} = k^y$ so $k^y \in C_L(A)$.

Let $[K]$ be the set of objects in $C_{\cat OG{}}(A)$ isomorphic to
the object $K$. There is a bijection 
\begin{center}
  \begin{tikzpicture}
   \matrix (dia) [matrix of math nodes, column sep=45pt, row
   sep=30pt]{
   \{ g \in N_G(K) \mid [g,A] \leq K \} \backslash
   \{ g \in G \mid [g,A] \leq K \} & \left[K\right] \\};
 \draw [->] (dia-1-1) -- (dia-1-2) node[pos=.5, above] {$K \to K^g$};
  \end{tikzpicture}
\end{center}
between $[K]$ and the orbit set for the free left action of the group
$\{ g \in N_G(K) \mid [g,A] \leq K \}$ on the set $\{ g \in G \mid
[g,A] \leq K \}$. Indeed, if $g \in G$ and conjugation by $g$ is an
iso\m\ $K \to K^g$ in $C_{\cat OG{}}(A)$, then $K^g$ is normalized by
$A$ and $[g,A] \leq K$ as we have the commutative square
\begin{center}
  \begin{tikzpicture}
   \matrix (dia) [matrix of math nodes, column sep=25pt, row
   sep=20pt]{
     K & K^g \\
     K & K^g \\};
   \draw [->] (dia-1-1) -- (dia-1-2) node[pos=.5, above] {$g$};
   \draw [->] (dia-2-1) -- (dia-2-2) node[pos=.5, below] {$g$};
   \draw [->] (dia-1-1) -- (dia-2-1) node[pos=.5, left] {$a$};
   \draw [->] (dia-1-2) -- (dia-2-2) node[pos=.5, right] {$a$};
  \end{tikzpicture}
\end{center}
in $\cat OG{}$. Conversely, if $g \in G$ and $[g,A] \leq K$, then $K^g$
is normalized by any $a \in A$ as
$K^{ga} = K^{ag} = K^g$ and the above diagram commutes.

The free right action of $N_G(K)/K = \cat OG{}(K)$ on $G/K$, $gK \cdot
xK = gxK$ for $g \in G$, $x \in N_G(K)$, restricts to a free right
action of $C_{N_G(K)/K}(A) = C_{\cat OG{}(K)}(A)$ on $C_{G/K}(A)$. The
number of orbits for this action equals the number $|[K]|$ of objects
of $C_{\cat OGp}(A)$ isomorphic to $K$.
To see this we note that
\begin{equation*}
  |C_{G/K}(A)| = \frac{|\{g \in G \mid [g,A] \leq K\}|}{|K|}, \qquad
  |C_{N_G(K)/K}(A)| = \frac{|\{g \in N_G(K) \mid [g,A] \leq K\}|}{|K|}
\end{equation*}
and thus
\begin{equation}\label{eq:|[K]|}
  |[K]| = 
  \frac{|\{g \in G \mid [g,A] \leq K\}|}{|\{g \in N_G(K) \mid [g,A] \leq K\}|} 
  =\frac{|C_{G/K}(A)|}{|C_{N_G(K)/K}(A)|}
  =\frac{|C_{G/K}(A)|}{|C_{\cat OG{}(K)}(A)|}
\end{equation}
For later use we record that
\begin{equation}
  \label{eq:cowtnum}
  |[K]| | C_{\cat OG{}}(A)(K) | =
  |[K]| | C_{\cat OG{}(K)}(A) | =
  |C_{G/K}(A)|
\end{equation}
for any object $K$ of $C_{\cat OG{}}(A)$. 
Under the additional
 assumptions that $A$ {\em centralizes\/} $K$ and
 the order of $A$ is prime to the order of $K$, $[K,A] = 1$ and
 $(|K|,|A|)=1$, we have that $[g,a] \leq K \iff [g,a]=1$.
To see this, note that if $g \in G$ and $[g,a] \leq
 K$ then, in fact, $[g,a]=1$ is the neutral element of $G$ as the
 order of $[g,a]$ divides $|K|$ since it is an element of $K$ and also
 divides the order of $A$ as $[g,a^j]=[g,a]^j$ for any $j \geq 0$
 \cite[5.1.5.(ii)]{robinson:groups}. We conclude that
 \begin{equation}
   \label{eq:cowtnumspecial}
    |[K]| | C_{\cat OG{}}(A)(K) | 
    = |C_{G/K}(A)| = \frac{|\{g \in G \mid [g,A] \leq K\}|}{|K|}
    = \frac{|C_G(A)|}{|K|}
    = |C_G(A) : K |
 \end{equation}
 when $[K,A]=1$ and $(|K|,|A|)=1$.

 By using nonabelian cohomology groups we can get a similar result in
 a slightly different situation.



 \begin{cor}\label{cor:CGHA2}
   $\cat O{(G,A)}p = C_{\cat OGp}(A)$ when $A$ is $p$-regular.
 \end{cor}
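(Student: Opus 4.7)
The plan is to show that the obvious map
\begin{equation*}
  \cat O{(G,A)}p(H,K) \to C_{\cat OGp}(A)(H,K), \qquad
  gC_K(A) \mapsto gK,
\end{equation*}
is a bijection for every pair of $A$-normalized $p$-subgroups $H,K \leq G$. Since both categories share the same objects and composition in each is inherited from multiplication in $G$, such a compatible family of bijections delivers the asserted equality of categories.

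Well-definedness and injectivity are formal. If $g_1, g_2 \in C_{N_G(H,K)}(A)$ satisfy $g_1K = g_2K$, then $g_2^{-1}g_1$ lies in both $K$ and $C_G(A)$, hence in $C_K(A)$, so they represent the same class in $\cat O{(G,A)}p(H,K)$.

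The substantive content is surjectivity. Given $g \in N_G(H,K)$ with $[g,A] \leq K$, I plan to produce $k \in K$ such that $gk \in C_{N_G(H,K)}(A)$. Form the semidirect product $K \rtimes A$ inside $G \rtimes A$; this is available since $A$ normalizes $K$. The condition $[g,A] \leq K$ translates to the statement that conjugation by $g$ inside $G \rtimes A$ sends the complement $A$ of $K$ to a subgroup $g^{-1}Ag$ which still lies in $K \rtimes A$ and projects isomorphically onto $A$ modulo $K$, so it is again a complement to $K$. Equivalently, $a \mapsto [g,a]$ is a $1$-cocycle from $A$ to the $A$-group $K$. Because $K$ is a $p$-group and $A$ is $p$-regular, $\gcd(|K|,|A|)=1$, and Schur--Zassenhaus supplies $k \in K$ with $k^{-1}(g^{-1}Ag)k = A$; this is equivalent to $gk \in C_G(A)$. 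Since $k \in K \leq N_G(H,K)$, the product $gk$ still lies in $N_G(H,K)$ and satisfies $gkK = gK$, giving the desired preimage.

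The only subtle point is the bookkeeping in $G \rtimes A$ needed to identify $g^{-1}Ag$ as a complement to $K$ in $K \rtimes A$ and to read off that $gk$ centralizes $A$ from the conjugation relation; once the semidirect-product calculation is carried out, the Schur--Zassenhaus theorem does the rest of the work.
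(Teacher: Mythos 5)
Your proof is correct and is in substance the paper's own argument: the paper proves surjectivity of $C_{N_G(H,K)}(A) \to C_{N_G(H,K)/K}(A)$ by observing that $a \mapsto k_a$ with $g^a = g k_a$ is a nonabelian $1$-cocycle of $A$ in $K$ and then invoking $H^1(A;K)=\{\ast\}$ when $(|A|,|K|)=1$, while you invoke the Schur--Zassenhaus conjugacy of complements to $K$ in $K \rtimes A$ --- which is the standard reformulation of exactly that vanishing statement. The difference is packaging (complements versus cocycles), not substance.
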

 \begin{proof}
   The map $C_{N_G(H,K)}(A) \to C_{N_G(H,K)/K}(A)$ is surjective: Let
   $x \in N_G(H,K)$ and suppose that the image of $x$ in $N_G(H,K)/K$
   is centralized by $A$. Define a function $A \to K \colon a \to k_a$
   by $x^a =x k_a$. As $k_{a_1a_2} =k_{a_2}k_{a_1}^{a_2}$ this a
   nonabelian $1$-cocycle \cite[\S5]{serre:galois}. But
   $H^1(A;K)=\{\ast\}$ \cite[Exercises p $59$]{serre:galois} so there
   is $\ell \in K$ such that $k_a = \ell \ell^{-a}$. Then $x \ell$ is
   centralized by $A$. This implies that $\cat O{(G,A)}{}(H,K) =
   C_{N_G(H,K)}(A)/C_K(A) = C_{N_G(H,K)/K}(A) = C_{\cat OG{}}(A)(H,K)$
   for any two $A$-normalized $p$-subgroups $H,K \leq G$.
 \end{proof}


 As a special case, suppose that $V$ is a finite dimensional
 $\F_p$-vector space $V$ and $A \leq \GL {}V{}$.  The objects of the
 category $\cat O{(V,A)}{}$ are the $\F_p[A]$-submodules of the
 $\F_p[\GL {}V{}]$-module $V$. The set of \m s between two
 $\F_p[A]$-submodules, $U_1$ and $U_2$, is
  \begin{equation}\label{eq:COVA}
    C_{\cat OV{}}(A)(U_1,U_2) =
  \begin{cases}
    C_{V}(A)/C_{U_2}(A) & U_1 \leq U_2 \\ 
    \emptyset & U_1 \nleq U_2
  \end{cases}
  \end{equation}
  $V$ is the terminal object in $\cat O{(V,A)}{}$.

  \begin{prop}\label{prop:cowtCOVA}
    Let $A$ be a finite group acting on the finite dimensional
    $\F_p$-vector space $V$.  The function
    \begin{equation*}
      k_U =
      \begin{cases}
        |C_V(A)|^{-1}  &  \dim U=0 \\
        (p-1)|C_V(A)|^{-1}  &    \dim U= 1 =\dim C_U(A)  \\
        0 & \text{otherwise}
      \end{cases}
    \end{equation*}
    is a co\we\ for $\cat O{(V,A)}{}$.
  \end{prop}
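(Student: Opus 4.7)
The plan is to verify directly that the proposed function $k_\bullet$ satisfies the co-weighting identity
\begin{equation*}
  \sum_{U_1} |C_{\cat OV{}}(A)(U_1,U_2)| \, k_{U_1} = 1
\end{equation*}
for every $A$-submodule $U_2 \leq V$, mirroring the argument for Proposition~\ref{prop:cowtOV} but with $V$ replaced by $C_V(A)$. Using Equation~\eqref{eq:COVA} to rewrite the morphism counts, I would reduce the required identity to
\begin{equation*}
  \sum_{\substack{U_1 \leq U_2 \\ U_1 \text{ an }A\text{-submodule}}} k_{U_1}
  \;=\; \frac{|C_{U_2}(A)|}{|C_V(A)|}.
\end{equation*}

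Next I would identify which $A$-submodules $U_1 \leq U_2$ actually contribute. By the definition of $k_\bullet$, only $U_1=0$ and the $1$-dimensional $A$-submodules $U_1$ with $\dim C_{U_1}(A)=1$ matter. A $1$-dimensional $A$-submodule satisfies $C_{U_1}(A)=U_1$ exactly when $A$ acts trivially on $U_1$, which in turn is equivalent to $U_1 \subseteq C_V(A)$. Hence the nonzero-weight contributors contained in $U_2$ are exactly the $1$-dimensional $\F_p$-subspaces of the $\F_p$-space $C_{U_2}(A)$; there are
\begin{equation*}
  \frac{|C_{U_2}(A)|-1}{p-1}
\end{equation*}
of these, each contributing $(p-1)/|C_V(A)|$.

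Finally I would combine these contributions to obtain
\begin{equation*}
  \frac{1}{|C_V(A)|} \;+\; \frac{p-1}{|C_V(A)|}\cdot\frac{|C_{U_2}(A)|-1}{p-1}
  \;=\; \frac{|C_{U_2}(A)|}{|C_V(A)|},
\end{equation*}
which is precisely the reduced identity, completing the verification. No serious obstacle is expected: the only subtle input is the characterization of the $1$-dimensional $A$-submodules of nonzero weight as the lines inside the fixed subspace $C_V(A)$, and the rest is parallel to the non-equivariant computation of Proposition~\ref{prop:cowtOV}.
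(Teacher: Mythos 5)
Your proof is correct and follows essentially the same approach as the paper: both reduce the co-weighting identity to a computation over subspaces of the fixed subspace $C_{U_2}(A)$, using the observation that nonzero weights occur only at $A$-submodules $U_1$ with $C_{U_1}(A)=U_1$, i.e.\ at $\F_p$-subspaces of $C_V(A)$. The only stylistic difference is that the paper, having identified the sum $\sum_{U_1 \leq C_{U_2}(A)} k_{U_1}\tfrac{|C_V(A)|}{|C_{U_2}(A)|}$, recognizes the rescaled summands as exactly the co-weight on $\cat O{C_{U_2}(A)}{}$ from Proposition~\ref{prop:cowtOV} and cites that result, while you reproduce the explicit line count inside $C_{U_2}(A)$; both paths give the same conclusion with the same underlying mechanism.
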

  \begin{proof}
   Let $U_2$ be an $\F_p[A]$-submodule of $V$. 
   Then
   \begin{equation*}
     \sum_{
     \begin{subarray}{c}
       U_1 \leq U_2 \\ [A,U_1] \leq U_1
     \end{subarray}} k_{U_1} |\cat O{(V,A)}{}(U_1,U_2)| =
     \sum_{U_1 \leq C_{U_2}(A)} k_{U_1} |\cat O{(V,A)}{}(U_1,U_2)| =
     \sum_{U_1 \leq C_{U_2}(A)} k_{U_1} \frac{|C_V(A)|}{|C_{U_2}(A)|}
     =
     \sum_{U_1 \leq C_{U_2}(A)} k_{U_1}^{C_{U_2}(A)} = 1
   \end{equation*}
   since
   \begin{itemize}
   \item $k_{U_1} \neq 0 \implies U_1=C_{U_1}(A)$
   \item if $U_1 \leq C_{U_2}(A) \leq U_2$ then $C_{\cat
       OV{}}(A)(U_1,U_2) = C_V(A)/C_{U_2}(A) = \cat
     O{C_V(A)}{}(U_1,C_{U_2}(A))$ \eqref{eq:COVA}
   \item $k_{U_1} \frac{|C_V(A)|}{|C_{U_2}(A)|}$ is the co\we\
     $k_{U_1}^{C_{U_2}(A)}$ on $\cat O{C_{U_2}(A)}{}$ from
     Proposition~\ref{prop:cowtOV}
   \end{itemize}
   This shows that the function $k_\bullet$ is a co\we\ on $\cat
   O{(V,A)}{}$.
  \end{proof}

  Let $\cat O{(V,A)}{[1,V)}$ be the full subcategory of $\cat
O{(V,A)}{}$ generated by all its object but the final object $V$.

  \begin{lemma}\label{lemma:COVA}
    Let $A$ be a finite group acting on the finite dimensional
    $\F_p$-vector space $V$. Then
    \begin{equation*}
      \chi(\cat O{(V,A)}{[1,V)})=
      \begin{cases}
        0 & \dim V=0 \\
        p^{-1}  & \dim V =1 = \dim C_V(A) \\
        1 & \text{otherwise}
      \end{cases}
    \end{equation*}
   \end{lemma}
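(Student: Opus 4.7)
The strategy mirrors the proof of Corollary~\ref{cor:cowtOV}: exploit the fact that $V$ is a terminal object of $\cat O{(V,A)}{}$. First I would check terminality: for any $\F_p[A]$-submodule $U \leq V$, the \m\ set $\cat O{(V,A)}{}(U,V) = C_V(A)/C_V(A)$ is a singleton by \eqref{eq:COVA}. Hence $\chi(\cat O{(V,A)}{})=1$ by \cite[Examples 2.3.(d)]{leinster08}.

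Next, $\cat O{(V,A)}{[1,V)}$ is the full subcategory obtained by deleting this terminal object; its object set is downward closed, so it is a right ideal of $\cat O{(V,A)}{}$. By the general restriction principle \cite[Remark 2.6]{jmm_mwj:2010}, the co\we\ of $\cat O{(V,A)}{}$ supplied by Proposition~\ref{prop:cowtCOVA} therefore restricts to a co\we\ on $\cat O{(V,A)}{[1,V)}$, and summing its values yields
\begin{equation*}
  \chi(\cat O{(V,A)}{[1,V)}) \;=\; \chi(\cat O{(V,A)}{}) - k_V \;=\; 1 - k_V.
\end{equation*}

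The conclusion now follows by reading off $k_V$ from the case-by-case formula in Proposition~\ref{prop:cowtCOVA}. If $\dim V = 0$, then $k_V = |C_V(A)|^{-1} = 1$, giving $\chi = 0$; this agrees with the observation that $\cat O{(V,A)}{[1,V)}$ is empty in this case. If $\dim V = 1 = \dim C_V(A)$, then $k_V = (p-1)|C_V(A)|^{-1} = (p-1)/p$, giving $\chi = 1/p$; one can sanity-check this directly, since the subcategory then consists of the single object $0$ with endomorphism monoid $C_V(A)$ of order $p$. In all remaining configurations neither clause of the co\we\ formula applies at $U = V$, so $k_V = 0$ and $\chi = 1$. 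The only point that requires care is verifying that a lone terminal object may be amputated from $\cat O{(V,A)}{}$ without disturbing the co\we\ values on the remaining objects, and that is precisely the right-ideal restriction principle invoked above.
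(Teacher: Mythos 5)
Your proof is correct and relies on exactly the same two ingredients as the paper's: the co\we\ of Proposition~\ref{prop:cowtCOVA} and the right-ideal restriction principle from \cite[Remark 2.6]{jmm_mwj:2010}, together with the terminal-object observation $\chi(\cat O{(V,A)}{})=1$. The only organizational difference is that you apply the uniform formula $\chi(\cat O{(V,A)}{[1,V)})=1-k_V$ in all dimensions, whereas the paper treats $\dim V\le 1$ by direct inspection of the $0$- and $1$-object categories and invokes the co\we\ restriction only for $\dim V>1$; your version is slightly tidier but not a genuinely different route.
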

 \begin{proof}
   If $\dim V=0$, $\cat O{(V,A)}{[1,V)}$ is the $0$-object category
 which has \Euc s $\chi=0$. If $\dim V=1$, $\cat O{(V,A)}{[1,V)}$ is
 the $1$-object category of the group $C_V(A)$ so its \Euc\ is
 $p^{-1}$ if $A$ acts trivially and $1$ if not. If $\dim V>1$,
 $\chi(\cat O{(V,A)}{[1,V)})=\chi(\cat O{(V,A)}{})$ because the
 co\we\ for $\cat O{(V,A)}{}$ (Proposition~\ref{prop:cowtCOVA})
 restricts to a co\we\ for the right ideal $\cat O{(V,A)}{[1,V)}$
 \cite[Remark 2.6]{jmm_mwj:2010} and it has value $0$ at the deleted
 object $V$.  But clearly, $\chi(\cat O{(V,A)}{})=1$ as this category
 has a final object.
\end{proof}

\begin{exmp}
  When $A$ is $p$-regular, $\cat O{(V,A)}{} = C_{\cat OV{}}(A)$ by
  Corollary~\ref{cor:CGHA2}. However, when
$p=2$, $V=\F_2^2$ has dimension $2$, and $A=\Sigma_2 \leq
\mathrm{GL}(V)$ has order $2$, then $\chi(C_{\cat
  OV{[1,V)}}(A))=\frac{1}{2}$ and $\chi(\cat O{(V,A)}{})=1$. Thus
these two categories are not equivalent.

\end{exmp}


\begin{prop}\label{prop:coweCOGpA}  
Consider the centralized subcategory $C_{\cat OGp}(A)$ and its
subcategory $\cat O{(G,A)}p$.
\begin{enumerate}
\item \label{prop:coweCOGpA1} The function
  \begin{equation*}
    k_K = \frac{-\rchi(C^{[1,V)}_{\cat OV{}}(A))}{|C_{G/K}(A)|}, \qquad
   V=K/\Phi(K)
  \end{equation*}
  is a co\we\ for the centralized orbit category $C_{\cat OGp}(A)$.
\item \label{prop:coweCOGpA2}  The function 
  \begin{equation*}
    k_K =
    \begin{cases}
      |C_G(A)|^{-1} & K=1 \\
      |C_G(A)|^{-1}(1-p^{-1})|K| & \text{$K>1$ cyclic} \\
      0 & \text{otherwise}
    \end{cases}
  \end{equation*}
  is a co\we\ for $\cat O{(G,A)}p$
  and the
  \Euc\ 
  \begin{equation*}
    \chi(\cat O{(G,A)}p) = \frac{|C_G(A)_p|}{|C_G(A)|}
  \end{equation*}
  is the density of $p$-singular elements in $C_G(A)$.
\end{enumerate}
\end{prop}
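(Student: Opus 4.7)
The plan is to follow the Frattini-quotient strategy used in the proof of Proposition~\ref{prop:coweOGp}, adapted to the equivariant setting, and to compute the slice categories and the denominators for each part. In each case I will invoke the general formula $k_K = -\rchi(\mathcal{C}//K)/\bigl(|[K]|\,|\mathcal{C}(K)|\bigr)$ for the co-weighting on a finite EI-category $\mathcal{C}$ from \cite[Theorem 3.7]{gm:2012}.

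For Part~\eqref{prop:coweCOGpA1}, I would first establish the $N_A(K)$-equivariant analog of the equivalence $i_K \colon \cat OK{[1,K)} \xrightarrow{\simeq} \cat OG{}//K$ used in Proposition~\ref{prop:coweOGp}, yielding $C_{\cat OK{[1,K)}}(A) \simeq C_{\cat OGp}(A)//K$. Next I would check that the Frattini-quotient adjunction from \cite[Lemma 5.1.(c)]{gm:2012} between $\cat OK{[1,K)}$ and $\cat OV{[1,V)}$, where $V = K/\Phi(K)$, is $A$-equivariant; Proposition~\ref{prop:adjunction} then induces an adjunction between $C_{\cat OK{[1,K)}}(A)$ and $C_{\cat OV{[1,V)}}(A)$, and adjoint categories share Euler characteristics by \cite[Proposition 2.4]{leinster08}. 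Combining this with the denominator $|[K]|\,|C_{\cat OG{}(K)}(A)| = |C_{G/K}(A)|$ obtained from Equation~\eqref{eq:cowtnum} will produce the stated formula for the co-weighting of $C_{\cat OGp}(A)$.

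For Part~\eqref{prop:coweCOGpA2}, the same outline applies to $\cat O{(G,A)}p$ with its smaller morphism sets $C_{N_G(H,K)}(A)/C_K(A)$. The slice identification I need is $\cat O{(G,A)}p//K \simeq \cat O{(K,A)}{[1,K)}$, which I would verify directly by showing that the compatibility condition on a morphism $(H,[g])\to (K,[1])$ in the slice forces the representative $g \in C_G(A)$ to lie in $C_K(A)$, so that morphisms over $K$ between $A$-normalized subgroups $H_1,H_2 \leq K$ reduce to $C_{N_K(H_1,H_2)}(A)/C_{H_2}(A) = \cat O{(K,A)}{}(H_1,H_2)$. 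A Frattini-quotient adjunction between $\cat O{(K,A)}{[1,K)}$ and $\cat O{(V,A)}{[1,V)}$, constructed on representatives in $C_G(A)$, then reduces the computation of $-\rchi$ to Lemma~\ref{lemma:COVA}. The denominator is $|[K]|\,|\cat O{(G,A)}p(K)| = |C_G(A):C_K(A)|$ by the orbit-counting argument of Equation~\eqref{eq:cowtnumspecial}. The stated co-weighting values emerge because $-\rchi(\cat O{(V,A)}{[1,V)})$ vanishes unless $V$ is $A$-centralized of dimension at most $1$, and the Euler characteristic $\chi(\cat O{(G,A)}p) = |C_G(A)_p|/|C_G(A)|$ follows by summing $k_K$ over cyclic $p$-subgroups of $C_G(A)$ and invoking Lemma~\ref{lemma:Gp} applied to the centralizer subgroup $C_G(A)$.

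The hard part will be the slice identification in Part~\eqref{prop:coweCOGpA2}: since the morphism sets of $\cat O{(G,A)}{}$ are strictly smaller than those of $C_{\cat OG{}}(A)$ in general (cf.\ Corollary~\ref{cor:CGHA2}), the Frattini-quotient adjunction must be constructed within the $\cat O{(-,A)}$ framework rather than obtained by restricting from Proposition~\ref{prop:adjunction}. A secondary subtlety is to confirm that the co-weighting is concentrated exactly on the cyclic $p$-subgroups contained in $C_G(A)$, so that the total sum matches the count provided by Lemma~\ref{lemma:Gp}.
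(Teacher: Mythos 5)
Your proposal follows the same route as the paper's proof: in each part you identify the coslice $\mathcal{C}//K$ with the appropriate subdivided orbit category $\cat OK{[1,K)}$ type object, pass to the Frattini quotient $V=K/\Phi(K)$ via an adjunction, compute the numerator from Lemma~\ref{lemma:COVA} and the denominator from Equations~\eqref{eq:cowtnum}/\eqref{eq:cowtnumspecial}, and finish with the cyclic-subgroup count of Lemma~\ref{lemma:Gp}. You also correctly flag that for $\cat O{(G,A)}p$ the Frattini adjunction must be built directly inside the $\cat O{(-,A)}$ framework and that the co-weighting concentrates on cyclic $p$-subgroups \emph{centralized} (not merely normalized) by $A$ -- these are precisely the points the paper's proof also leans on, via \cite[Theorem~5.3.5]{gorenstein68}.
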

\begin{proof}
  Let $K$ be an $A$-normalized $p$-subgroup of $G$. 
  Any \m\  $xK \in C_{N_G(H,K)/K}(A) = C_{\cat OGp}(A)(H,K)$ is
  isomorphic in $C_{\cat OG{p}}(A)//K$ to $K \in  C_{N_G(H^x,K)/K}(A)
  = C_{\cat OGp}(A)(H^x,K)$. This observation shows that
  that there
  are category equivalences
  \begin{equation*}
    C_{\cat OK{}}(A) \to  C_{\cat OG{p}}(A)/K, \qquad
    C_{\cat OK{[1,K)}}(A) \to C_{\cat OG{p}}(A)//K
  \end{equation*}
  and, using \eqref{eq:cowtnum}, it follows that
  \begin{equation*}
    k_K = 
    \frac{-\rchi(C_{\catp [{}]OK{[1,K)}}(A))}
    {|[K]| |C_{\cat OG{}}(A)(K)|} =
    \frac{-\rchi(C_{\catp [{}]O{V}{[1,V)}}(A))}
    {|[K]| |C_{\cat OG{}}(A)(K)|} =
    \frac{-\rchi(C_{\catp [{}]O{V}{[1,V)}}(A))}
    {|C_{G/K}(A)|}, \qquad V=V(K), 
  \end{equation*}
  is a co\we\ for $C_{\cat OGp}(A)$ according to (equivariant versions
  of) \cite[Theorem~3.7, Lemma~5.1.(c)]{gm:2012}. Here, $V = V(K) =
  K/\Phi(K)$ is the Frattini quotient with its inherited $A$-action.

  Similarly, there are equivalences of categories
  \begin{equation*}
    \cat O{(K,A)}{} \to \cat O{(G,A)}{}/K, \qquad
    \cat O{(K,A)}{[1,K)} \to \cat O{(G,A)}{}//K
  \end{equation*}
  and we get the co\we\ for $\cat O{(G,A)}p$
  \begin{equation*}
    k_K = \frac{-\rchi(\cat O{(K,A)}{[1,K)})}
    {|[K]| |\cat O{(G,A)}{}(K)|} =
    \frac{-\rchi(\cat O{(V,A)}{[1,V)})}{|C_G(A) : C_K(A)|}
  \end{equation*}
  as there are $|[K]| = |C_G(A) : C_{N_G(K)}(A)|$ objects of $\cat
  O{(G,A)}p$ isomorphic to $K$.  Since $K$ is cyclic if and and
  only if its Frattini quotient $V = V(K)$ is cyclic
  \cite[Corollary~5.1.2]{gorenstein68} we see from
  Lemma~\ref{lemma:COVA} that $k_K$ is concentrated on the cyclic
  $p$-subgroups normalized by $A$, in fact on the cyclic $p$-subgroups
  centralized by $A$. Here we use that if $A$ centralizes $V(K)$ then
  $A$ centralizes $K$ \cite[Theorem~5.3.5]{gorenstein68}. The co\we\
  takes value $k_1= |C_G(A)|^{-1}$ at the trivial subgroup and value
  \begin{equation*}
    k_C = \frac{1-p^{-1}}{|C_G(A):C|} = 
    \frac{1}{|C_G(A)|}(1-p^{-1})|C|
  \end{equation*}
  at a nontrivial cyclic subgroup $C$ centralized by $A$ by
  Equation~\eqref{eq:cowtnumspecial}.  The counting formula of
  Lemma~\ref{lemma:Gp} now finishes the proof.
\end{proof}

\begin{exmp}\label{exmp:coweAnotpreg}
  Let $p=2$, $G=\Sigma_4$, and $A=\gen{(1,2)(3,4)} = C_2$. Then $A$ is
  {\em not\/} $p$-regular, the co\we\ for $C_{\cat OGp}(A)$ is {\em
    not\/} concentrated on the cyclic subgroups, and $\chi(C_{\cat
    OGp}(A))=2/3$ does {\em not\/} equal the density, $1$, of the
  $p$-singular elements in $C_G(A)=D_8$. Thus $C_{\cat OGp}(A)$ and
  $\cat O{(G,A)}p$ are not equivalent.
\end{exmp}

It is a curious fact, in contrast to the situation for Brown posets
(Corollary~\ref{cor:S+-}.\eqref{cor:S+-3}), that the centralizer
subcategory of the orbit category, $C_{\cat OGp}(A)$, and the orbit
category of centralizer subgroup, $\cat O{C_G(A)}p$, have the same
\Euc s when $A$ is $p$-regular (Propositions \ref{prop:coweOGp} and
\ref{prop:coweCOGpA}).

\begin{prop}\label{prop:weCOGpA}
  Consider the centralized subcategory $C_{\cat OGp}(A)$ and its
  subcategory $\cat O{(G,A)}p$.
\begin{enumerate}
\item  \label{prop:weCOGpA1}  The function
  \begin{equation*}
    k^H =
    \frac{-\rchi(C_{\catp S{{\cat OG{}(H)}}*}(A))}
    {|C_{G/H}(A)|}
  \end{equation*}
  is a \we\ for $C_{\cat OG{p}}(A)$ and the \Euc\ is
  \begin{equation*}
    \chi(C_{\cat OG{p}}(A)) =
    \sum_{H} 
     \frac{-\rchi(C_{\catp
        S{{\cat OG{}(H)}}*}(A))}{|C_{G/H}(A)|} =
    \sum_{[H]} 
     \frac{-\rchi(C_{\catp
        S{{\cat OG{}(H)}}*}(A))}{|C_{{\cat OG{}(H)}}(A)|}
  \end{equation*}
  where the first sum runs over the set of objects $H$ of $C_{\cat
    OG{p+\mathrm{rad}}}(A)$ and the second one over the set isomorphism
    classes of such objects.
  \item \label{prop:weCOGpA2} The function
  \begin{equation*}
    k^H =
    \frac{-\rchi(C_{\catp S{{\cat OG{}(H)}}*}(A))}
    {|C_G(A):C_H(A)|}
  \end{equation*}
  is a \we\ for $\cat O{(G,A)}p$ and the \Euc\ is
  \begin{equation*}
    \chi(\cat O{(G,A)}p) =
    \sum_{H} 
     \frac{-\rchi(C_{\catp
        S{{\cat OG{}(H)}}*}(A))}{|C_G(A) : C_H(A)|} =
    \sum_{[H]} 
     \frac{-\rchi(C_{\catp
        S{{\cat OG{}(H)}}*}(A))}{|\cat O{(G,A)}{}(H)|}
  \end{equation*}
  where the first sum runs over the set of objects $H$ of $C_{\cat
    OG{p+\mathrm{rad}}}(A)$ and the second one over the set isomorphism
    classes of such objects. 
\end{enumerate}
\end{prop}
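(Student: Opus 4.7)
The plan is to apply the general weighting formula for finite $\mathrm{EI}$-categories from \cite[Theorem~3.7]{gm:2012}, which asserts that the unique weighting on $\mathcal{C}$ takes the value $k^H = -\rchi(H//\mathcal{C})/(|[H]_{\mathcal{C}}|\cdot|\mathcal{C}(H)|)$, to each of $\mathcal{C} = C_{\cat OGp}(A)$ and $\mathcal{C} = \cat O{(G,A)}p$ in turn. Once the weighting is matched with the formula in the statement, the two Euler-characteristic expressions follow at once: $\chi(\mathcal{C}) = \sum_H k^H$ gives the sum over objects, and grouping isomorphic objects gives the sum over isomorphism classes.

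For the denominators, I compute directly from the structure of each category. In case~\eqref{prop:weCOGpA1}, Equation~\eqref{eq:cowtnum} yields $|[H]_{\mathcal{C}}|\cdot|C_{\cat OG{}(H)}(A)| = |C_{G/H}(A)|$, matching the denominator claimed. In case~\eqref{prop:weCOGpA2}, two objects $H$ and $K$ of $\cat O{(G,A)}p$ are isomorphic exactly when $H^g = K$ for some $g \in C_G(A)$, so $|[H]_{\mathcal{C}}| = |C_G(A):C_{N_G(H)}(A)|$, while by definition $|\mathcal{C}(H)| = |C_{N_G(H)}(A):C_H(A)|$; the product telescopes to $|C_G(A):C_H(A)|$.

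For the numerators, the key input is the non-equivariant equivalence $H//\cat OGp \simeq \cat S{\cat OG{}(H)}{p+*}$ from the proof of \cite[Theorem~8.1]{gm:2012}. Since both sides inherit compatible $A$-actions from the $A$-action on $G$, this should promote to an $A$-equivariant equivalence, and taking $A$-centralizers Proposition~\ref{prop:adjunction} then supplies the identity $\rchi(H//C_{\cat OGp}(A)) = \rchi(C_{\cat S{\cat OG{}(H)}{p+*}}(A))$ needed for \eqref{prop:weCOGpA1}. For \eqref{prop:weCOGpA2}, the analogous identification between $H//\cat O{(G,A)}p$ and $C_{\cat S{\cat OG{}(H)}{p+*}}(A)$ must be established on the nose: a morphism in $\cat O{(G,A)}p$ is parametrized by an honest $A$-centralizing element modulo $C_K(A)$, rather than merely by an $A$-fixed coset modulo $K$.

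The main obstacle is this last point. The categories $C_{\cat OGp}(A)$ and $\cat O{(G,A)}p$ agree when $A$ is $p$-regular (Corollary~\ref{cor:CGHA2}), but Example~\ref{exmp:coweAnotpreg} shows they diverge in general, and their strict coslices diverge in the same way. Accordingly the numerator identification in Step~3 has to be performed twice: the $A$-equivariant refinement of \cite[Theorem~8.1]{gm:2012} handles \eqref{prop:weCOGpA1}, while \eqref{prop:weCOGpA2} further requires a nonabelian cohomology lifting argument, analogous to the one used in Corollary~\ref{cor:CGHA2}, to pass from $A$-fixed cosets of morphism representatives to actual $A$-centralizing representatives. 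Once these identifications are in place, vanishing of the weighting off the $G$-radical $p$-subgroups (via Lemma~\ref{lemma:OpHA} applied to the quotient $\cat OG{}(H) = N_G(H)/H$) reduces the Euler-characteristic sums to the two forms displayed.
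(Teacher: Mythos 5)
Your framework is right: invoke \cite[Theorem~3.7]{gm:2012}, compute $|[H]|\cdot|\mathcal{C}(H)|$ in each category, and identify $\rchi(H//\mathcal{C})$ with $\rchi(C_{\cat S{\cat OG{}(H)}{p+*}}(A))$. The denominator calculations are correct, and for \eqref{prop:weCOGpA1} your plan to upgrade the non-equivariant coslice equivalence of \cite[\S8]{gm:2012} to an $A$-adjunction and pass through Proposition~\ref{prop:adjunction} is essentially the paper's argument: the paper writes the functor $gK \mapsto N_{{}^gK}(H)/H$ explicitly and checks that $[g,A]\leq K$ forces ${}^gK$ and hence $N_{{}^gK}(H)/H$ to be $A$-normalized, which is precisely the well-definedness needed for $A$-equivariance.

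The gap is in your treatment of \eqref{prop:weCOGpA2}. You propose a nonabelian cohomology lifting ``analogous to Corollary~\ref{cor:CGHA2}'' to pass from $A$-fixed cosets in $C_{N_G(H,K)/K}(A)$ to honest $A$-centralizing representatives in $C_{N_G(H,K)}(A)/C_K(A)$. That lifting is exactly the vanishing $H^1(A;K)=\{\ast\}$, which requires $(|A|,|K|)=1$, i.e.\ $A$ $p$-regular --- an assumption the proposition does not make, and Example~\ref{exmp:coweAnotpreg} shows the two categories genuinely differ when it fails. The point you should exploit instead is that in $\cat O{(G,A)}p$ the lifting is unnecessary: a morphism is \emph{already} given by an $A$-centralizing element $g\in C_{N_G(H,K)}(A)$, not by an $A$-fixed coset. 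So one constructs the functor $H//\cat O{(G,A)}p \to C_{\cat S{\cat OG{}(H)}{p+*}}(A)$, $gC_K(A)\mapsto N_{{}^gK}(H)/H$, directly from that representative (since $g\in C_G(A)$, ${}^gK={}^{g a}K^a=({}^gK)^a$ with no hypothesis on $|A|$), together with the section $L/H\mapsto 1\!\cdot\!C_L(A)$, and checks it is a homotopy equivalence exactly as in case~\eqref{prop:weCOGpA1}. No comparison with $C_{\cat OGp}(A)$ and no cohomology vanishing enter. Rerouting through Corollary~\ref{cor:CGHA2} is not merely inefficient --- it would silently restrict the result to $p$-regular $A$, which would break the induction used later to prove Theorem~\ref{thm:global}.\eqref{thm:global2}.
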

\begin{proof}
  As in the nonequivariant case \cite[\S8]{gm:2012} there are homotopy
  equivalence of categories
  \begin{equation*}
    H/C_{\cat OGp}(A) \to C_{\cat S{\cat OG{}(H)}p}(A), \qquad
    H//C_{\cat OGp}(A) \to C_{\cat S{\cat OG{}(H)}{p+*}}(A)
  \end{equation*}
  which takes $gK \in C_{N_G(H,K)/K}(A)$ to the nonidentity
  \cite[5.2.4]{robinson:groups} $A$-normalized subgroup $N_{^gK}(H)/H$
  of $N_G(H)/H$. This subgroup is indeed $A$-normalized because if
  $[g,A] \subseteq K$ then $g^{-1}a=kag^{-1}$ for some $k \in K$. Then
  $({}^gK)^a = K^{g^{-1}a} = K^{kag^{-1}} = K^{ag^{-1}} = K^{g^{-1}} =
  {}^gK$ for any $a \in A$. It now follows from \cite[Theorem
  C]{gm:2012} and Equation~\ref{eq:cowtnum} that the function
  \begin{equation*}
    k^H =  
    \frac{-\rchi(C_{\catp S{{\cat OG{}(H)}}*}(A))}
    {|[H]||C_{{\cat OG{}(H)}}(A)|} =
    \frac{-\rchi(C_{\catp S{{\cat OG{}(H)}}*}(A))}
    {|C_{G/H}(A)|}
  \end{equation*}
  is a \we\ for $C_{\cat OGp}(A)$.  Lemma~\ref{lemma:OpHA} shows that
  the $k^H=0$ unless $H$ is $G$-radical $p$-subgroup of $G$.

  Similarly, there are homotopy equivalence of categories
  \begin{equation*}
    H/\cat O{(G,A)}p \to C_{\cat S{\cat OG{}(H)}p}(A), \qquad
    H//\cat O{(G,A)}p \to C_{\cat S{\cat OG{}(H)}{p+*}}(A)
  \end{equation*}
  and the function
  \begin{equation*}
    k^H =  
    \frac{-\rchi(C_{\catp S{{\cat OG{}(H)}}*}(A))}
    {|[H]||\cat O{(G,A)}{}(H)|} =
    \frac{-\rchi(C_{\catp S{{\cat OG{}(H)}}*}(A))}
    {|C_G(A) : C_H(A)|}
  \end{equation*}
  is a co\we\ for $\cat O{(G,A)}p$.
\end{proof}


We can now the generalize the global formula below
Proposition~\ref{prop:weOGp} to an equivariant situation.


\begin{proof}[Proof of Theorem~\ref{thm:global}.\eqref{thm:global1}]
  The equation
  \begin{equation*}
    |C_G(A)_p| +
    \sum_{H} 
     \rchi(C_{\catp S{{\cat OG{}(H)}}*}(A))|C_H(A)| = 0
  \end{equation*}
  simply states that the \Euc\ of $\cat O{(G,A)}p$, the density of the
  $p$-singular elements in $C_G(A)$
  (Proposition~\ref{prop:coweCOGpA}.\eqref{prop:coweCOGpA2}), is the
  sum of the values of the \we\ from
  Proposition~\ref{prop:weCOGpA}.\eqref{prop:weCOGpA2}.
\end{proof}

The equation of Theorem~\ref{thm:global}.\eqref{thm:global1} can be
reformulated as
\begin{equation*}
  \sum_{K \in C_{\cat SG{p+\mathrm{rad}}}(A)/N_G(A)}
  k^{[K]} C_{\cat SG{p+\mathrm{rad}}}(A)(1,[K]) |C_K(A)| = |C_G(A)_p|
\end{equation*}
where by Proposition~\ref{prop:weCSGpradA}
\begin{equation*}
  \left(
    C_{\cat SG{p+\mathrm{rad}}}(A)(H,[K]) 
    \right)_{H,K \in  C_{\cat SG{p+\mathrm{rad}}}(A)/N_G(A)}
    \begin{pmatrix}
      \vdots \\ k^{[K]} \\ \vdots
    \end{pmatrix} =
    \begin{pmatrix}
      \vdots \\ 1 \\ \vdots
    \end{pmatrix}
\end{equation*}
and $C_{\cat SG{p+\mathrm{rad}}}(A)(1,[K])$ is the length of
the $N_G(A)$-orbit through $K \in C_{\cat SG{p+\mathrm{rad}}}(A)$.
The \we\ values are $k^{[K]} = -\rchi(C_{\cat S{N_G(K)/K}{p+*}}(A))$
by Lemma~\ref{lemma:wtA}; in particular, $k^{[1]} = -\rchi(C_{\cat
  S{G}{p+*}}(A))$.

\begin{exmp}   
  Let $p=2$ and $G=\Sigma_5$ the symmetric group on $5$ things
  considered as an $A=\gen{(1,2,3)}$-group with conjugation action.
  The centralizer $C_G(A) = A \times \gen{(4,5)}$ is cyclic of order
  $6$ and contains $2=|C_G(A)_2|$ $2$-singular elements. The poset of
  $A$-normalized $2$-subgroups
  \begin{equation*}
  C_{\cat SG{2+\mathrm{rad}}}(A) = \{H_4,H_3,H_2,H_1 \} =
  \{ \gen{(1,4)(2,3),(1,3)(2,4)}, \gen{(1,2)(3,5),(1,3)(2,5)}, \gen{(4,5)}, 
  \gen{1}\}
  \end{equation*}
  consists of four subgroups: $H_3$ and $H_4$ are four-groups, $H_2$
  is $O_2C_G(A)$, and $H_1$ is the trivial subgroup. The orbits for
  the $N_G(A)$-action on $ C_{\cat SG{2+\mathrm{rad}}}(A)$ are $
  \{H_3,H_4\}, \{H_2\}, \{H_1\} $. The \we\ $k^{[H]} = -\rchi(C_{\cat
    S{N_G(H)/H}{2+*}}(A))$  for $ C_{\cat
    SG{2+\mathrm{rad}}}(A)/N_G(A)$ is the solution
   to the linear equation
  \begin{equation*}
    \begin{pmatrix}
      1 & 0 & 0 \\ 0 & 1 & 0 \\ 2 & 1 & 1
    \end{pmatrix}
    \begin{pmatrix}
      1 \\ 1 \\ -2
    \end{pmatrix} =
    \begin{pmatrix}
      1 \\ 1 \\ 1
    \end{pmatrix}
  \end{equation*}
   and the vector $(|C_{H_i}(A)|)_{i=3,2,1}$ is $(1,2,1)$.
  The sum on the left hand side of the equation of
  Theorem~\ref{thm:global}.\eqref{thm:global1} is
  \begin{equation*}
   k^{[H_3]} \cdot |[H_3]| \cdot |C_{H_3}(A)| + 
   k^{[H_2]} \cdot |[H_2]| \cdot |C_{H_2}(A)| +   
   k^{[H_1]} \cdot |[H_1]| \cdot |C_{H_1}(A)| =
    1 \cdot 2 \cdot 1 +
    1 \cdot 1 \cdot 2 +
    (-2) \cdot 1 \cdot 1 = 2  
  \end{equation*}
  which indeed equals the number $|C_G(A)_2|$ of $2$-singular elements
  of $C_G(A)$.  Alternatively, Equation~\eqref{eq:global1} applied
  directly to $C_G(A) = C_6$ has the form
  \begin{equation*}
    \sum_{H \in \cat S{C_G(A)}{2+\mathrm{rad}}}
    -\chi(\cat S{C_G(A(/H}{2+*})|H| =  -\chi(\cat S{C_3}{2+*})|C_2| = 1
    \cdot 2 = |C_G(A)_2| 
  \end{equation*}
  as $\cat S{C_6}{2+\mathrm{rad}} = \{O_2C_6 \}$ has but one element.
  This shows that the these two formulas are quite different from one
  another.
\end{exmp}

\begin{exmp}
  Let $K = \SL m{\F_q}$, $m \geq 2$, and $\bar K = \SL m{\bar \F_q}$
  where $q$ is a power of $p$. The involutory graph auto\m\ $\gamma$
  of $K$ or $\bar K$ maps $g$ to $A^{-1}(g^t)^{-1}A$ where $A$ is the
  $(m \times m)$-matrix with $(+1,-1,\ldots, \pm 1)$ in the diagonal
  from upper right to lower left corner \cite[\S2.7]{GLSIII}.  Let
  $\Pi$ denote the set of fundamental roots of the root system
  $\Sigma$ for the algebraic group $\SL m{\bar\F_p}$. Let $\tau$ be
  the linear auto\m\ of $\R^m$ given by $a_i^\tau = -a_{m+1-i}$ where
  $a_i$ is the standard basis vector. Then $\alpha_i^\tau =
  \alpha_{m-i}$ and $x_{\alpha}(t)^\gamma = x_{\alpha^\tau}(t)$ where
  $\alpha \in \Pi$ is any fundamental root and $x_\alpha$ the root
  group parameterization \cite[Definition 1.9.4]{GLSIII}.

  The centralizer $C_{\SL m{\F_q}}(\gamma)$ of $\gamma$ in $A_{m-1}(q) = \SL
  m{\F_q}$ is $B_{(m-1)/2}(q) = \SOodd m{\F_q} \leq \SL m{\F_q}$ if
  $m$ is odd and $C_{m/2}(q) = \Sp m{\F_q} \leq \SL m{\F_q}$ if $m$ is
  even \cite[\S2.7]{GLSIII}. It seems to be well-known, although I
  have not been able to find an explicit reference in the literature,
  that in fact there is a an iso\m\ of posets
  \begin{equation*}
    C_{\cat S{\SL m{\F_q}}{p+\mathrm{rad}}}(\gamma) \to
    \cat S{C_{\SL m{\F_q}}(\gamma)}{p+\mathrm{rad}}
  \end{equation*}
  induced by the map $P_J \to C_{P_J}(\gamma)$ defined on the set of
  standard $\gamma$-invariant parabolic subgroups $P_J$ where $J
  \subset \Pi$ is any $\tau$-invariant set of fundamental
  roots. Consequently,
  \begin{equation*}
    \chi(C_{\cat S{\SL m{\F_q}}{p+*+\mathrm{rad}}}(\gamma)) =
    \begin{cases}
      \chi(\cat S{C_{\SOodd m{\F_q}}}{p+*+\mathrm{rad}}) & 
      \text{$m$ odd} \\
      \chi(\cat S{C_{\Sp m{\F_q}}}{p+*+\mathrm{rad}}) &
      \text{$m$ even}
    \end{cases}
  \end{equation*}
  and the equivariant relations of
  Theorem~\ref{thm:global}.\eqref{thm:global1}--\eqref{thm:global0}
  for the centralized subposet $ C_{\cat S{\SL
      m{\F_q}}{p+\mathrm{rad}}}(\gamma)$ are simply the corresponding
  nonequivariant relations for $\cat S{\SOodd m{\F_q}}{p+*}$ and 
  $\cat S{\Sp m{\F_q}}{p+*}$ studied in Example~\ref{exmp:chevalley}.
\end{exmp}


This section ends with two proofs of
Theorem~\ref{thm:global}.\eqref{thm:global2}. The first proof is
similar to that of Theorem~\ref{thm:frobeniusbrown}, and the second
one is simply a rewriting of Quillen's proof of Brown's theorem
\cite[Corollary~4.2]{quillen78}.

\begin{proof}[Proof of Theorem~\ref{thm:global}.\eqref{thm:global2} using
 \ref{thm:global}.\eqref{thm:global1}] 
 According to  Theorem~\ref{thm:global}.\eqref{thm:global1},
 \begin{equation*}
    |C_G(A)_p| + \rchi(C_{\cat SG{p+*}}(A)) +
    \sum_{H \neq 1} 
     \rchi(C_{\catp S{{\cat OG{}(H)}}*}(A))|C_H(A)| = 0
 \end{equation*}
 with summation over all nonidentity $A$-normalized $p$-subgroups of
 $G$, or, equivalently,
\begin{equation*}
  |C_G(A)_p| + \rchi(C_{\cat SG{p+*}}(A)) +
    \sum_{[H] \neq [1]} 
     \frac{\rchi(C_{\catp S{{\cat OG{}(H)}}*}(A))}
     {|\cat O{(G,A)}{}(H)|}|C_G(A)| = 0
\end{equation*}
with summation over all iso\m\ classes of objects of $\cat
O{(G,A)}{p+*}$. Write each term of the sum as a product
\begin{equation*}
   \frac{\rchi(C_{\catp S{{\cat OG{}(H)}}*}(A))}
     {|\cat O{(G,A)}{}(H)|}|C_G(A)| =
     \frac{\rchi(C_{\catp S{{\cat OG{}(H)}}*}(A))}
     {|\cat O{(G,A)}{}(H)|_p}
     \frac{|C_G(A)|}{|\cat O{(G,A)}{}(H)|_{p'}}
\end{equation*}
of two factors. Both these factors are integers. The first factor is
an integer because, by induction, we may assume that $|C_{\cat
  OG{}(H)}(A)|_p = |C_{\cat OG{}}(A)(H)|_p$ divides the numerator.
Then also $|\cat O{(G,A)}{}(H)|_p$ divides the numerator as the
auto\m\ group of $H$ in $\cat O{(G,A)}{}$ is a subgroup of the auto\m\
group of $H$ in $C_{\cat OG{}}(A)$. The second factor is an integer
because the auto\m\ group of $H$ in $\cat O{(G,A)}{}$ is a
subquotient of $C_G(A)$. This second factor is clearly also divisible
by $|C_G(A)|_p$.  

We now know that $|C_G(A)|_p$ divides the sum. As it also divides the
number $|C_G(A)_p|$ of $p$-singular elements in $C_G(A)$ by Frobenius'
Theorem~\ref{thm:frobenius}, we conclude that it divides the reduced
\Euc\ of the $A$-centralized Brown poset $C_{\cat SG{p+*}}(A)$.
\end{proof}

\begin{proof}[Proof of Theorem~\ref{thm:global}.\eqref{thm:global2}
  following Quillen \cite{quillen78}]
  Let $B$ be any nonidentity $p$-subgroup of $C_G(A)$. Consider the
  $B$-poset $Y = \mathrm{sd}\; C_{\cat SG{p+*}}(A)$ that is the
  subdivision of the $A$-centralized Brown poset. Then $\rchi(Y)
  =\rchi(C_{\cat SG{p+*}}(A))$.  Define
  \begin{equation*}
    Y' = \bigcup_{X \in \cat SB*} C_Y(X)
  \end{equation*}
  be the subposet of $Y$ consisting of all elements  $y \in Y$ with
  a nontrivial isotropy subgroup $B_y$. Define $Z$ to be the subposet
  of $\cat SB* \times Y'$ given by
  \begin{equation*}
    Z = \{(X,y) \in \cat SB* \times Y' \mid C_Y(X) \ni y \} =
    \{ (X,y) \in \cat SB* \times Y' \mid X \leq B_y \}
  \end{equation*}
  of pairs $(X,y)$, $1 \lneq X \leq B$, $y \in Y'$, such that $X$
  fixes $y$, or, equivalently, $X$ is contained in the isotropy
  subgroup at $y$. When $1 \lneq X \leq B$,
  \begin{equation*}
    \{ y \in Y' \mid C_Y(X) \ni y \} = C_Y(X) = 
    \mathrm{sd}\; C_{C_{\cat SG{p+*}}(A)}(X)
  \end{equation*}
  is conically contractible: $H \leq XH \geq X$. When $y \in Y'$,
  \begin{equation*}
    \{ X \in \cat SB* \mid (X,y) \in Z \} =
    \{ X \in \cat SB* \mid X \leq B_y \} = \cat S{B_y}*
  \end{equation*}
  is contractible because $B_y$ is the largest element. Now
  \cite[Corollary 1.8]{quillen78} (a variant of Quillen's Theorem A)
  shows that $Y'$, $Z$, and $\cat SB*$ are homotopy equivalent; in
  fact, they are contractible as $\cat SB*$, with largest element $B$,
  is so. It follows that the reduced \Euc\ $\rchi(Y) = \chi(Y,*) =
  \chi(Y,Y')$. This relative \Euc\ is divisible by $|B|$ because $B$
  acts freely on $Y - Y'$.
\end{proof}

\begin{cor}\label{cor:equibrown}
  Let $A$ be a $p$-regular group acting on $G$.  Then
  \begin{equation*}
    \chi(\{K \in C_{\cat SG{p+*}}(A) \mid C_K(A) \neq K \}) \equiv
    \chi(\{ K \in C_{\cat SG{p+*}}(A) \mid 1 \lneqq C_K(A) \lneqq K \})
    \bmod |C_G(A)|_p
  \end{equation*}
\end{cor}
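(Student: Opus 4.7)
The plan is to combine Theorem~\ref{thm:global}.\eqref{thm:global2}, Brown's Theorem~\ref{thm:brown} applied to the centralizer subgroup $C_G(A)$, and the Mayer--Vietoris-type identity from Corollary~\ref{cor:S+-}.\eqref{cor:S+-3}. The statement is really just a bookkeeping consequence of two separate divisibility results.

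First I would apply Theorem~\ref{thm:global}.\eqref{thm:global2} to obtain $|C_G(A)|_p \mid \rchi(C_{\cat SG{p+*}}(A))$. Then, since $A$ is $p$-regular, I would apply Brown's theorem (Theorem~\ref{thm:brown}) to the group $C_G(A)$ itself to obtain $|C_G(A)|_p \mid \rchi(\cat S{C_G(A)}{p+*})$. Subtracting these two congruences and passing from reduced to unreduced Euler characteristics (which differ by a constant), we get
\begin{equation*}
  \chi(C_{\cat SG{p+*}}(A)) \equiv \chi(\cat S{C_G(A)}{p+*}) \pmod{|C_G(A)|_p}.
\end{equation*}

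Now I would invoke Corollary~\ref{cor:S+-}.\eqref{cor:S+-3}, which rewrites the left-hand side minus the right-hand side as
\begin{equation*}
  \chi(\{K \in C_{\cat SG{p+*}}(A) \mid C_K(A) \lneqq K\})
  - \chi(\{K \in C_{\cat SG{p+*}}(A) \mid 1 \lneqq C_K(A) \lneqq K\}),
\end{equation*}
and the claimed congruence follows immediately. The degenerate case in which one of the posets is empty (which can happen only when $p \nmid |G|$, so that $|C_G(A)|_p = 1$) makes the congruence trivially true, so no separate argument is needed.

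There is no real obstacle here: the substance has already been absorbed into Theorem~\ref{thm:global}.\eqref{thm:global2} and Brown's theorem, and Corollary~\ref{cor:S+-}.\eqref{cor:S+-3} provides the precise algebraic identity that converts the divisibility statements into the stated congruence. The only mild point to check is that the hypothesis ``$A$ is $p$-regular'' used in Corollary~\ref{cor:S+-} is exactly what is needed to use the $[H,A]$/$C_H(A)$ decomposition of $A$-normalized $p$-subgroups, and this hypothesis is also included in the present corollary.
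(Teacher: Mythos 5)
Your proof is correct and follows exactly the same route as the paper: combine Corollary~\ref{cor:S+-}.\eqref{cor:S+-3} with Brown's Theorem~\ref{thm:brown} (applied to $C_G(A)$) and Theorem~\ref{thm:global}.\eqref{thm:global2}, noting that the constant shift between reduced and unreduced Euler characteristics cancels in the difference. The remark about degenerate cases is harmless but unnecessary, since the argument already applies uniformly.
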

\begin{proof}
  By Corollary~\ref{cor:S+-}.\eqref{cor:S+-3}, the difference between
  these two integers is the difference between the reduced \Euc s of the
  centralizer subposet and the poset of the centralizer subgroup, both
  of which are divisible by $|C_G(A)|_p$ (Theorems \ref{thm:brown} and
  \ref{thm:global}.\eqref{thm:global2}).
\end{proof}

\section{Conjectures of Alperin and Kn\"orr--Robinson}
\label{sec:KRC}
This section contains a brief comment on the Alperin Weight Conjecture.

\begin{defn}[$\mathrm{KRC}_p$ and $\mathrm{AWC}_p$]
\label{defn:KR}
The finite group $G$ satisfies the
\begin{itemize}
\item Kn\"orr--Robinson conjecture at $p$ if \cite{knorr_robinson:89,
    thevenaz93Alperin}
\begin{equation*}
  \chi_2(\cat SG{p+*},G) = k(G) - z_p(G), \qquad
  -\rchi_2(\cat SG{p+*},G) = z_p(G)
\end{equation*}
\item  Alperin weight conjecture at $p$ if \cite{alperin87}
  \begin{equation*}
    k_{p'}(G)  = \sum_{[P] \in \cat SG{p+\mathrm{rad}}/G} z_p(N_G(P)/P)
  \end{equation*}
  where the sum runs over the set $\cat SG{p+\mathrm{rad}}/G$ of
  conjugacy classes $[P]$ of $G$-radical $p$-subgroups $P$ of $G$.
\end{itemize}
\end{defn}

The two conjectures are equivalent in the sense that
\cite{knorr_robinson:89} \cite[Thm 3.1]{thevenaz93Alperin}
\begin{equation*}
  \forall H \in \cat SGp \colon \mathrm{KRC}_p(N_G(H)/H) \iff
  \forall H \in \cat SGp \colon \mathrm{AWC}_p(N_G(H)/H)
\end{equation*}
for the given finite group $G$.

It is sometimes possible to verify the Kn\"orr--Robinson conjecture by
machine computations as  we have seen that $\mathrm{KRC}_p(G)$ is
equivalent to any of the following three equivalent conditions
\begin{align}\label{eq:KRCp01}
    \sum_{[x] \in [G]} \ralpha_2(\cat SG{p+*},G)(x) &= -z_p(G)  \\
 \sum_{[C] \in \cat SG{p'+\mathrm{cyc}}/G} \wa_2(\cat SG{p+*},G)(C)
   \frac{\sum_{x \in C-\{1\}} |C_G(x)|)}{|N_G(C)| } 
   &= -z_p(G) \label{eq:KRCp02}
    \\
   \sum_{[A] \in \cat SG{p'+\mathrm{abe}}/G}
   \rchi(C_{\catp SG{*}}(A)) \frac{\varphi_2(A)}{ |N_G(A)|} &= -z_p(G) 
   \label{eq:KRCp03} 
\end{align}
where
\begin{itemize}
\item In Equation~\eqref{eq:KRCp01}, quoting 
Equation~\eqref{eq:charinnerprod},
the value at $x
\in G$ of the reduced class function $\ralpha_2(\cat SG{p+*},G)$ is
\begin{equation*}
  \ralpha_2(\cat SG{p+*},G)(x) = 
  \sum_{[y] \in [C_G(x)]} 
  \frac{\rchi(C_{\cat SG{p+*}}(\gen{x,y}))}{|C_G(\gen{x,y})|} =
  \rchi_1(C_{\cat SG{p+*}}(x),C_G(x)) = 
  \rchi(\sd{C_{\cat SG{p+*}}(x)}/C_G(x))
\end{equation*}
and $\cat SG{p+*}$ can be replaced by the smaller posets $\cat
SG{p+*+\mathrm{rad}}$ of $G$-radical or $\cat SG{p+*+\mathrm{eab}}$ of
elementary abelian $p$-subgroups (Proposition~\ref{prop:SGradeab},
Definition~\ref{defn:classfct}, Equation~\eqref{eq:firstalpha2},
Corollary~\ref{cor:quocomplex})
\item Equation~\eqref{eq:KRCp02}, repeating
  Equation~\eqref{eq:artincoeff}, uses the Artin coefficients,
  $\wa_2(\cat SG{p+*},G)(C)$, introduced in
  Corollary~\ref{cor:OpH}.\eqref{cor:OpH3}
\item In Equation~\eqref{eq:KRCp03} the sum runs over all conjugacy classes
of $p$-regular abelian subgroups $A$ of $G$
(Corollary~\ref{cor:OpH}.\eqref{cor:OpH1}) and $C_{\catp SG*}(A)$ may
be replaced by $C_{\cat SG{p+*+\mathrm{eab}}}(A)$ or $C_{\cat
  SG{p+*+\mathrm{rad}}}(A)$ (Proposition~\ref{prop:SGradeab}). 
\end{itemize}

\begin{exmp}\label{exmp:cyclicsyl}
  $\mathrm{KRC}_p(G)$ is true when $G$ has a cyclic \syl p $S$
\cite[Example 1.4]{thevenaz93Alperin}.  The discrete $G$-poset $\cat
SG{p+*+\mathrm{rad}}$ is the right $G$-set $N_G(S) \backslash G$ of
$G$-conjugates of $S$, the discrete poset $C_{\cat
  SG{p+*+\mathrm{rad}}}(x)$ is the set $C_{N_G(S) \backslash G}(x)$
for any $x \in G$, and (the unreduced form of)
Equation~\eqref{eq:KRCp01} takes the form
\begin{equation*}
    \sum_{[x] \in [G]} |C_{N_G(S)\backslash G}(x)/C_G(x)| = k(G)-z_p(G)
  \end{equation*}
  in this special case.
\end{exmp}

$\mathrm{KRC}_p(G)$ is true also if $O_p(G) \neq 1$ \cite[Example
1.4]{thevenaz93Alperin}. This is because $z_p(G)=0$ by Ito's theorem
\cite[Corollary~53.18]{cr} and $\rchi(C_{\cat SG{p+*}}(A))=0$ for all
$A$ in Equation~\eqref{eq:KRCp03} by Lemma~\ref{lemma:OpHA}.  In
particular the Kn\"orr--Robinson conjecture at $p$ is true for $\GL
n{\F_q}$ when $p$ divides $q-1$ as these groups have a central element
of order $p$. The conjecture also holds for $\GL n{\F_q}$ when $q$ is
a power of $p$ \cite[Proposition~4.1]{thevenaz92poly} and for all
solvable groups \cite[p 195]{thevenaz93Alperin}.



We now use a computer to check
Equations~\eqref{eq:KRCp01}--\eqref{eq:KRCp03} in case of the Mathieu
group $G=M_{11}$. The computations of Example~\ref{exmp:M11} were
carried out using Magma \cite{magma}.

\begin{exmp} \label{exmp:M11}
  The Mathieu group $G=M_{11}$ of order $|G|=7,920 = 2^4 \cdot 3^2
  \cdot 5^1 \cdot 11^1$ contains $k(G)=10$ conjugacy classes of orders
  $(1, 2, 3, 4, 5, 6, 8, 8, 11, 11)$ and $8$ classes of cyclic groups
  of orders $(1, 2, 3, 4, 5, 6, 8, 11)$.  There are $z_p(G)=
  (2,1,5,3)$ irreducible complex representations of $p$-defect $0$ for
  each of the prime divisors $p(G)=(2,3,5,11)$ of the group order.  We
  now verify Equations~\eqref{eq:KRCp01}--\eqref{eq:KRCp03} at all
  primes $p \in p(G)$.

  Table~\ref{tab:alpha2M11} displays the values of the reduced class
  functions $\ralpha_2(\cat SG{p+*},G)$ at each of the $10$ conjugacy
  classes $x$.  The column to the right contains the row sum.  By
  Equation~\eqref{eq:KRCp01}, $\mathrm{KRC}_p(G)$ is true (as it is
  for $p=5, 11$ by Example~\ref{exmp:cyclicsyl}) if and only if this
  column contains $(-z_p(G))_{p \in p(G)}$.
\begin{table}[t]
  \centering 
     \begin{tabular}[t]{>{$}c<{$} | *{10}{>{$}r<{$}} | >{$}c<{$}  }
  |x| &  1 & 2 &  3 &  4 &  5 &  6 &  8 &  8 &  11 &  11  & \Sigma \\ \hline
  \ralpha_2(\cat SG{2+*},G)
  & 0  &   0  &   1  &   0  &  -1  &   0  &   0  &   0  &  -1  &  -1 &
  -2 \\ \noalign{\smallskip}
  \ralpha_2(\cat SG{3+*},G)
  & 0  &   1  &   0  &   1  &  -1  &   0  &   0  &   0  &  -1  &  -1 &
  -1  \\ \noalign{\smallskip}
  \ralpha_2(\cat SG{5+*},G)
  &0  &   0  &  -1  &   1  &   0  &  -1  &  -1  &  -1  &  -1  &  -1 &
  -5\\ \noalign{\smallskip}
  \ralpha_2(\cat SG{11+*},G)
  &0  &  -1  &  -1  &  -1  &   3  &  -1  &  -1  &  -1  &   0  &   0 & -3 \\
 \end{tabular}
  \caption{The reduced class functions 
   $\ralpha_2(\cat SG{p+*},G)$ for $G=M_{11}$ }
  \label{tab:alpha2M11}
\end{table}

Table~\ref{tab:artinM11} shows the reduced Artin coefficients
$\wa_2(\cat SG{p+*},G)(C)$ of Corollary~\ref{cor:OpH}.\eqref{cor:OpH3}
at each of the $8$ classes of cyclic subgroups $C$.  The top row
contains the vector with coordinates $\sum_{x \in C-\{1\}}
|C_G(x)|/|N_G(C)|$ (all nonnegative integers by
Proposition~\ref{prop:CGx/NGH}).  The column to the right shows the
standard inner product of the row with the top row.  For instance, the
second row means that
\begin{equation*}
  \ralpha_2(\cat SG{2+*},G) = 
  \frac{4}{99} \cdot 1_1^G + 
  \frac{1}{12}1_3^G-
  \frac{1}{4}1_5^G-
  \frac{1}{5}1_{11}^G,
  \quad 
  0 \cdot 320 + 1 \cdot 0 + 1 \cdot 1 + 4 \cdot 0 + 1 \cdot (-1)
  + 8 \cdot 0 + 6 \cdot 0 + 2 \cdot (-1) 
 = -2
\end{equation*}
at the prime $p=2$.  (Recall from
Corollary~\ref{cor:OpH}.\eqref{cor:OpH3} that the reduced Euler class
function $\wa_2(\cat SG{p+*},G)$ is the linear combination of the
Artin coefficients at $[C]$ divided by $|N_G(C):C|$ which here equals
$(7920, 24, 12, 4, 4, 2, 2, 5)$.)  By Equation~\eqref{eq:KRCp02},
$\mathrm{KRC}_p(G)$ is true (as it is for $p=5, 11$ by
Example~\ref{exmp:cyclicsyl}) if and only if the rightmost column
contains $(-z_p(G))_{p \in p(G)}$.
\begin{table}[t]
  \centering
     \begin{tabular}[t]{ >{$}c<{$} | *{8}{>{$}r<{$}} |  >{$}c<{$}  }
|C| &
   1  &    2  &    3  &    4  &    5  &    6  &    8  &   11 & \cdot \\
\hline\noalign{\smallskip}
\tiny{\sum_{x \neq 1} |C_G(x)|/|N_G(C)|}
{}& 
0 &  1 &   1 &  4 &   1 &  8 &  6 &  2 & {}\\
\hline\noalign{\smallskip}
\wa_2(\cat SG{2+*},G) &
  320  &    0  &    1  &    0  &   -1  &    0  &    0  &   -1 & -2 \\
  \noalign{\smallskip} 
\wa_2(\cat SG{3+*},G) &
  375  &   -2  &    0  &    1  &   -1  &    0  &    0  &   -1 & -1\\
  \noalign{\smallskip} 
\wa_2(\cat SG{5+*},G) &
 -296  &    1  &    2  &    2  &    0  &   -1  &   -1  &   -1 & -5 \\
 \noalign{\smallskip} 
\wa_2(\cat SG{11+*},G) &
-1463  &    6  &    2  &    0  &    3  &   -1  &   -1  &    0 & -3
\end{tabular}
  \caption{The reduced Artin coefficients 
    $\wa_2(\cat SG{p+*},G)$ for $G=M_{11}$}
  \label{tab:artinM11}
\end{table}

Thirdly, the four tables of Figure~\ref{fig:tabkrcp03} list, for each
prime divisor $p \in p(G)$, the $p$-part of the order of $C_G(A)$, the
\Euc\ $-\rchi(C_{\cat S{G}{p+*}}(A))$ of the $A$-centralized subposet,
the number $\varphi_2(A)$ of generating pairs of $A$, the length $|G :
N_{G}(A)|$ of $A$, and, in the bottom row, the product of these last
three numbers for each conjugacy class of abelian $p$-regular
subgroups $A$ of $G$; $\mathrm{KRC}_p(G)$ is true if and only if the
sum of the integers of the bottom row equals $z_p(G)|G|$.  (The row
containing $|C_G(A)|_p$ plays no role here but serves to illustrate
the divisibility statement of
Theorem~\ref{thm:global}.\eqref{thm:global2}.)

\begin{figure}[t]
  \centering
   \begin{tabular}[t]{>{$}c<{$} | *{5}{>{$}r<{$}} }
     A & 1 & 3 & 5 & 11 & 3 \times 3 \\ \hline\noalign{\smallskip}
     |C_G(A)|_2 & 16 & 2 & 1 & 1 & 1 \\ \hline\noalign{\smallskip}
     -\rchi(C_{\cat S{G}{2+*}}(A)) &496 &-8 &1 &1 &1 \\
     \grp A &1 &8 &24 &120 &48 \\
     |G : N_G(A)| &1 &220 &396 &144 &55 \\ \hline\noalign{\smallskip}
     &496 &-14080 &9504 &17280 &2640
   \end{tabular}

   \begin{tabular}[t]{>{$}c<{$} | *{7}{>{$}r<{$}} }
     A & 1 & 2 & 5 & 11  & 4 & 8  & 2 \times 2 \\ \hline\noalign{\smallskip}
    |C_G(A)|_3 & 9 & 3 & 1 & 1 & 1 & 1 & 1 \\ \hline\noalign{\smallskip}
     -\rchi(C_{\cat S{G}{3+*}}(A)) &-54 &-6 &1 &1 &-2 &0 &-2 \\
     \grp A &1 &3 &24 &120 &12 &48 &6 \\
     |G : N_G(A)| &1 &165 &396 &144 &495 &495 &330 \\ 
     \hline\noalign{\smallskip}
     &-54 &-2970 &9504 &17280 &-11880 &0 &-3960
   \end{tabular}

   \begin{tabular}[t]{>{$}c<{$} | *{9}{>{$}r<{$}} }
     A & 1 & 2 &3   &4 &6  &8 &11 &2 \times 2 &3 \times 3 \\ \hline
     \noalign{\smallskip}
     |C_G(A)|_5 & 5 & 1 & 1 & 1 & 1 & 1 & 1 & 1 & 1 \\
     \hline\noalign{\smallskip} 
     -\rchi(C_{\cat S{G}{5+*}}(A)) &-395 &-11 &1  &-3 &1 &1 &1 &1 &1 \\
     \grp A &1 &3 &8  &12 &24 &48 &120 &6 &48 \\
     |G : N_G(A)| &1 &165 &220  &495 &660 &495 &144 &330 &55 \\ 
     \hline\noalign{\smallskip}
     &-395 &-5445 &1760  &-17820 &15840 &23760 &17280 &1980 &2640
   \end{tabular}

   \begin{tabular}[t]{>{$}c<{$} | *{9}{>{$}r<{$}} }
     A &1 &2 &3 &4 &5 &6 &8 &2 \times 2 &3 \times 3 \\ \hline
     \noalign{\smallskip}
     |C_G(A)|_{11} & 11 & 1 & 1 & 1 & 1 & 1 & 1 & 1 & 1 \\
     \hline\noalign{\smallskip} 
     -\rchi(C_{\cat S{G}{11+*}}(A)) &-143 &1 &1 &1 &-3 &1 &1 &1 &1 \\
     \grp A &1 &3 &8  &12 &24 &24 &48 &6 &48 \\
     |G : N_G(A)| &1 &165 &220  &495 &396 &660 &495 &330 &55 \\ 
     \hline\noalign{\smallskip}
     &-143 &495 &1760  &5940 &-28512 &15840 &23760 &1980 &-640
   \end{tabular}
  \caption{Tables for Equation~\eqref{eq:KRCp03} for $G=M_{11}$ 
  at $p=2,3,5,11$}
  \label{fig:tabkrcp03}
\end{figure}

   In each of these four tables the sum of the numbers of the bottom
   row is $z_{p}(M_{11}) |M_{11}|$ and thus
   Equation~\eqref{eq:KRCp02} holds for $G=M_{11}$ for all primes $p
   \in p(G)$.
   \end{exmp}

   Similar machine computations demonstrate that $\mathrm{KRC}_p(G)$
   is true when $G=M_{11}, M_{12}, M_{22}, M_{23}, M_{24}$ is one of
   the five Mathieu groups or $G=J_1,J_2,J_3$ one of the three first
   Janko groups and $p$ is any prime divisor of $|G|$.

\section{The orbit category $\pi$-subgroups}
\label{sec:OGpi}

Let $\pi$ is a nonempty {\em set\/} of primes.  In this short section
we consider the orbit category $\cat OG\pi$ of $\pi$-singular
subgroups of $G$. The arguments here will be minor variations of the
ones used in the case considered above where $\pi$ consisted of a
single prime.

A left ideal in an $\mathrm{EI}$-category $\cat C{}{}$ is a full
subcategory $\cat I{}{}$ of $\cat C{}{}$ if $a \in \cat I{}{},\; \cat
C{}{}(a,b) \neq \emptyset \implies b \in \cat I{}{}$ for all objects
$a$, $b$ of $\cat C{}{}$. Similarly, a right ideal is a full
subcategory $\cat J{}{}$ of $\cat C{}{}$ if $b \in \cat J{}{},\; \cat
C{}{}(a,b) \neq \emptyset \implies a \in \cat J{}{}$ for all objects
$a$, $b$ of $\cat C{}{}$.
If $\cat I{}{}$ be a left and $\cat J{}{}$ a
right ideal in $\cat C{}{}$. Then
\begin{equation}\label{eq:lrideal}
  \chi(\cat C{}{}) - \chi(\cat I{}{}) = 
  \sum_{a \in \cat C{}{} - \cat I{}{}} k^a, \qquad
  \chi(\cat C{}{}) - \chi(\cat J{}{}) = 
  \sum_{a \in \cat C{}{} - \cat J{}{}} k_b
\end{equation}
where $k^\bullet$ and $k_\bullet$ are the unique iso\m\ invariant \we\
and co\we\ on $\cat C{}{}$ \cite[Theorem 3.7]{gm:2012}.
If the \we\ is concentrated on $\cat I{}{}$ then 
 $\chi(\cat C{}{}) = \chi(\cat I{}{})$ and if the co\we\ 
 is concentrated on $\cat J{}{}$ then 
 $\chi(\cat C{}{}) = \chi(\cat J{}{})$.

With more sophisticated methods it is possible to generalize
Corollary~\ref{cor:cowtOV}.

\begin{lemma}\label{lemma:chiOK}
  For any finite group $K$
  \begin{equation*}
    -\rchi(\cat OK{[1,K)}) =
    \begin{cases}
      \phi(|K|)/|K| & \text{$K$ cyclic} \\
      0 & \text{$K$ noncyclic}
    \end{cases}
  \end{equation*}
  where $\phi$ is Eulers $\phi$-function.
\end{lemma}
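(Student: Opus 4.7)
The plan is to identify $-\rchi(\cat OK{[1,K)})$ with the value at the terminal object $K$ of the iso-invariant co-weight on $\cat OK{}$, and to compute that co-weight directly as a generalization of Proposition~\ref{prop:coweOGp}.

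First, since $\cat OK{}(H,K) = N_K(H,K)/K = K/K$ is a singleton for every subgroup $H \leq K$, the object $K$ is terminal in $\cat OK{}$, whence $\chi(\cat OK{}) = 1$; the singleton $\{K\}$ is a left ideal and its complement $\cat OK{[1,K)}$ a right ideal. Equation~\eqref{eq:lrideal} then yields $1 - \chi(\cat OK{[1,K)}) = k_K$, hence $-\rchi(\cat OK{[1,K)}) = k_K$, so it suffices to compute $k_K$.

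Second, I claim the iso-invariant co-weight on $\cat OK{}$ is
\begin{equation*}
 k_H = \begin{cases} |K|^{-1}\phi(|H|) & H \text{ cyclic} \\ 0 & H \text{ non-cyclic,} \end{cases}
\end{equation*}
which specializes at $H = K$ to the stated value. To verify this, I check the defining co-weight equation $\sum_{H \leq K} k_H \, |\cat OK{}(H,L)| = 1$ for every $L \leq K$. Substituting $|\cat OK{}(H,L)| = |N_K(H,L)|/|L|$ and clearing denominators, it suffices to prove
\begin{equation*}
   \sum_{\text{cyclic } H \leq K} \phi(|H|) \, |\{k \in K : H^k \leq L\}| = |K| |L|.
\end{equation*}
Swapping the order of summation, the left-hand side becomes $\sum_{k \in K} \sum_{\text{cyclic } H \leq K, \; H^k \leq L} \phi(|H|)$; substituting $H' = H^k$ reindexes the inner sum over cyclic subgroups $H' \leq L$, giving $\sum_{\text{cyclic } H' \leq L} \phi(|H'|) = |L|$ by the classical identity $\sum_{d\mid n}\phi(d)=n$ applied via the partition of $L$ into cyclic spans of its elements. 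Summing over $k \in K$ then yields $|K||L|$ as required.

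Combining, $-\rchi(\cat OK{[1,K)}) = k_K$ equals $\phi(|K|)/|K|$ when $K$ is cyclic and $0$ otherwise. The only step requiring care is the co-weight verification; its main input is the orbit-counting reindexing $(k,H) \leftrightarrow (k,H^k)$, after which everything reduces to an elementary counting identity, so no serious obstacle arises.
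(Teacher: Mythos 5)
Your proof is correct, and it takes a genuinely different route from the paper.

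The paper's proof splits into two cases and cites external results for both: for the noncyclic case it invokes the formula $k_B = |K|^{-1}\sum_{A\leq B}|A|\mu(A,B)$ from \cite[Theorem~2.18.(3)]{jmm_mwj:2010} together with \cite[Proposition~2.8]{kratzer_thevenaz85} (the vanishing of $\sum_{A\leq K}|A|\mu(A,K)$ for noncyclic $K$); for the cyclic case it works on the \emph{weight} side, identifying $-\rchi(H//\cat OK{[1,K)})$ with a \Mb\ value $-\mu(|K|/|H|)$ and finishing by classical \Mb\ inversion ($\sum_{d\mid n}\mu(n/d)d = \phi(n)$). You instead exhibit the co\we\ on $\cat OK{}$ explicitly as $k_H = |K|^{-1}\phi(|H|)$ for cyclic $H$ and $0$ otherwise, and verify the defining equation $\sum_H k_H\,\zeta(H,L)=1$ by a double-counting/reindexing argument together with the elementary identity $\sum_{\text{cyclic }H\leq L}\phi(|H|)=|L|$. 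This treats both cases uniformly, avoids both the \Mb\ function and the Kratzer--Th\'evenaz citation, and is more self-contained. It also has the pleasant side effect of exhibiting the full co\we\ on $\cat OK{}$ (not just its value at the terminal object), which is exactly the extension of the paper's Proposition~\ref{prop:coweOGp} from $p$-groups to arbitrary finite groups, via $\phi(p^a)=(1-p^{-1})p^a$. The trade-off is that the paper's proof is shorter \emph{given} the cited results, whereas yours is longer but elementary. One cosmetic remark: the identity $\sum_{\text{cyclic }H\leq L}\phi(|H|)=|L|$ is not literally the divisor-sum identity $\sum_{d\mid n}\phi(d)=n$ (that special case is for $L$ cyclic); the correct justification is the one you append parenthetically, namely partitioning $L$ by the map $x\mapsto\gen{x}$, so it would read more cleanly to state that partition argument directly.
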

\begin{proof}
  The co\we\ on the contractible category $\cat OK{}$ is $k_B =
  |K|^{-1} \sum_{A \leq B}|A|\mu(A,B)$
  \cite[Theorem~2.18.(3)]{jmm_mwj:2010}. The subcategory $\cat
  OK{[1,K)}$ of proper subgroups is a right ideal so by
  Equations~\eqref{eq:lrideal}
  \begin{equation*}
    1 = \chi(\cat OK{}) = \chi(\cat OK{[1,K)}) + k_K
  \end{equation*}
  When $K$ is not cyclic then $k_K=0$ by
  \cite[Proposition~2.8]{kratzer_thevenaz85} and the formula follows
  in this case.

  Next, assume that $K$ is cyclic. For any subgroup $H$ of $K$,
  $H//\cat OK{[1,K)}$ is the interval $(1,K/H)$ in the poset $\cat
  SK{}$,  $-\rchi(H//\cat
  OK{[1,K)}) = -\rchi(1,K/H) = -\mu(|K|/|H|)$   and
  \begin{equation*}
    -|K|\rchi(\cat OK{[1,K)}) = |K|-|K|\chi(\cat OK{[1,K)}) =
    \sum_{1 \leq H \leq K} \mu(|K|/|H|)|H| = \phi(|K|) 
  \end{equation*}
  by classical \Mb\ inversion.
\end{proof}

\begin{lemma}\label{lemma:coweOGpi}
  The function
  \begin{equation*}
    k_K = \frac{-\rchi(\cat OK{[1,K)})}{|G:K|}
  \end{equation*}
  is a co\we\ for $\cat OG\pi$ and the \Euc\ 
  \begin{equation*}
    \chi(\cat OG\pi) = \frac{|G_\pi|}{|G|}
  \end{equation*}
  is the density of the $\pi$-singular elements in $G$.
\end{lemma}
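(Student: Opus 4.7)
The plan is to follow the proof of Proposition~\ref{prop:coweOGp} essentially verbatim, replacing the single prime $p$ by the prime set $\pi$ throughout. The key point is that the proof of that proposition only uses the fact that subgroups of a $p$-group are $p$-groups, and the analogous statement holds for $\pi$-groups, so the argument transfers cleanly.

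First I would establish the category equivalences
\begin{equation*}
  i_K \colon \cat OK{} \to \cat OG{\pi}/K, \qquad
  i_K \colon \cat OK{[1,K)} \to \cat OG{\pi}//K
\end{equation*}
defined on objects by $Hi_K = 1K \in N_G(H,K)/K$, for any $\pi$-subgroup $K$ of $G$. Fullness, faithfulness and essential surjectivity are routine; the crucial point is that every subgroup $H$ of a $\pi$-subgroup $K$ is again a $\pi$-subgroup, so every object of the slice categories on the right already comes from an object of the plain orbit category $\cat OK{}$ on the left, with no additional $\pi$-constraint to be verified. Combining these equivalences with the general co-weighting formula \cite[Theorem 3.7]{gm:2012} and the orbit-stabilizer identity $|[K]||\cat OG{}(K)| = |G:K|$ yields
\begin{equation*}
  k_K = \frac{-\rchi(\cat OG{\pi}//K)}{|[K]||\cat OG{}(K)|}
      = \frac{-\rchi(\cat OK{[1,K)})}{|G:K|}
\end{equation*}
as the unique iso-invariant co-weighting for $\cat OG{\pi}$.

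Next I would invoke Lemma~\ref{lemma:chiOK} to conclude that $-\rchi(\cat OK{[1,K)}) = \phi(|K|)/|K|$ when $K$ is cyclic and vanishes otherwise. Hence the co-weighting is concentrated on cyclic $\pi$-subgroups of $G$ and
\begin{equation*}
  \chi(\cat OG{\pi}) = \sum_K k_K = \frac{1}{|G|}\sum_K \phi(|K|)
\end{equation*}
where the remaining sum runs over all cyclic $\pi$-subgroups $K \leq G$ (including the trivial subgroup, for which $\phi(1)/1 = 1$ correctly accounts for the identity element).

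Finally, to identify this sum with $|G_\pi|$ I would generalize the counting argument of Lemma~\ref{lemma:Gp}: declare two $\pi$-singular elements equivalent if they generate the same cyclic subgroup. Each equivalence class is a cyclic $\pi$-subgroup $C$ of $G$, and its members are exactly the $\phi(|C|)$ generators of $C$. Summing yields $|G_\pi| = \sum_C \phi(|C|)$, which combined with the previous display gives $\chi(\cat OG{\pi}) = |G_\pi|/|G|$ and completes the proof. The main obstacle, if any, is purely notational: one must make sure of the conventions around "$\pi$-singular element" and around the trivial subgroup case, but once the definitions are unwound the single-prime proof transports without further modification.
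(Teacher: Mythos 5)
Your proof is correct and follows essentially the same route as the paper's: establish the slice equivalence $\cat OG{\pi}//K \simeq \cat OK{[1,K)}$, apply the co-weighting formula of \cite[Theorem 3.7]{gm:2012} with the identity $|[K]|\,|\cat OG{}(K)| = |G:K|$, invoke Lemma~\ref{lemma:chiOK} to see the co-weighting is concentrated on cyclic $\pi$-subgroups with value $\phi(|K|)/|G|$, and count $\pi$-singular elements by their generated cyclic subgroups. The only cosmetic difference is that you spell out the counting step (the $\pi$-analogue of Lemma~\ref{lemma:Gp}), which the paper leaves implicit in the phrase ``apply Lemma~\ref{lemma:chiOK} to compute the Euler characteristic as the sum of the values of the co-weighting.''
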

\begin{proof}
  As we saw in the proof of Proposition~\ref{prop:coweOGp}, the
  category $\cat OG\pi//K = \cat OG{}//K$ is equivalent to $\cat
  OK{[1,K)}$ for any object $K$ of $\cat OG\pi$.  This gives the
  co\we\ by \cite[Theorem 3.7]{gm:2012} as $|\cat OG\pi(K)||[K]| =
  |N_G(K):K| |G : N_G(K)| = |G : K|$.  Now apply
  Lemma~\ref{lemma:chiOK} to compute the \Euc\ as the sum of the
  values of the co\we .
\end{proof}

\begin{lemma}\label{lemma:weOGpi}
   The function
  \begin{equation*}
    k^H = \frac{-\rchi(H// \cat SG{\pi})}{|G:H|}
  \end{equation*}
  is a \we\ for $\cat OG\pi$ and the \Euc\ is
  \begin{equation*}
    \chi(\cat OG\pi) = 
    \sum_{H \in \cat SG\pi} 
    \frac{-\rchi(H// \cat SG{\pi})}{|G:H|} =
    \sum_{[H] \in \cat SG\pi/G} 
    \frac{-\rchi(H// \cat SG{\pi})}{|N_G(H):H|}
  \end{equation*}
\end{lemma}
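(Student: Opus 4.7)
The plan is to verify the formula directly. Set $k^H = -\rchi(H//\cat SG\pi)/|G:H|$; to show this is the \we\ for the finite $\mathrm{EI}$-category $\cat OG\pi$, it suffices to verify the defining equation
\begin{equation*}
\sum_{K \in \cat OG\pi}|\cat OG\pi(H,K)|\, k^K = 1
\end{equation*}
for every $\pi$-subgroup $H$ of $G$. Substituting $|\cat OG\pi(H,K)| = |N_G(H,K)|/|K|$ and $|G:K|=|G|/|K|$ reduces the left side to $|G|^{-1}\sum_{K} |N_G(H,K)|\cdot(-\rchi(K//\cat SG\pi))$. Expanding $|N_G(H,K)|=|\{g\in G\mid H^g\leq K\}|$ and exchanging the order of summation turns this into
\begin{equation*}
\frac{1}{|G|}\sum_{g\in G}\sum_{K\in H^g/\cat SG\pi}\bigl(-\rchi(K//\cat SG\pi)\bigr).
\end{equation*}

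The main step, where the substance lies, is evaluating the inner sum. For each $g$ the coslice poset $H^g/\cat SG\pi$ has minimum $H^g$ (itself a $\pi$-subgroup) and is therefore contractible of \Euc\ $1$. The Leinster \we\ at $K$ for any finite poset $P$ is $-\rchi(K//P)$, and since $K//(H^g/\cat SG\pi)=K//\cat SG\pi$ (a chain $L>K\geq H^g$ forces $L\geq H^g$), the inner sum is precisely the sum of the \we\ values on $H^g/\cat SG\pi$, which equals its \Euc\ $1$. The total is therefore $|G|/|G|=1$, verifying the \we\ axiom. The main obstacle is carefully tracking the cardinalities in the initial substitution and noticing that the switch-of-summation identifies the inner expression as a poset \Euc.

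The \Euc\ of $\cat OG\pi$ is then obtained by summing the \we, giving the first displayed formula $\chi(\cat OG\pi) = \sum_H k^H$. Because $k^H$ is iso\m-invariant (depending only on the conjugacy class of $H$), grouping by conjugacy class and using $|[H]|/|G:H| = |G:N_G(H)|\cdot|H|/|G| = 1/|N_G(H):H|$ converts this into the second expression.
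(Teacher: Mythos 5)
Your proof is correct, but it takes a different route from the paper's. The paper's proof constructs an explicit equivalence of categories $H//\cat OG\pi \to H//\cat SG\pi$ sending the coslice object $gK$ to ${}^gK$, observes this is surjective on objects and bijective on morphism sets, and then invokes the general weighting formula for finite $\mathrm{EI}$-categories from \cite[Theorem 3.7]{gm:2012}, which gives $k^H = -\rchi(H//\cat OG\pi)/\bigl(|[H]|\,|\cat OG\pi(H)|\bigr)$; the denominator is $|G:N_G(H)|\,|N_G(H):H| = |G:H|$, so the equivalence identifies the numerator with $-\rchi(H//\cat SG\pi)$. You instead verify the weighting axiom $\sum_K |\cat OG\pi(H,K)|\,k^K = 1$ head-on: after expanding $|N_G(H,K)| = |\{g \in G \mid H^g \leq K\}|$ and exchanging the order of summation, the inner sum $\sum_{K\geq H^g}\bigl(-\rchi(K//\cat SG\pi)\bigr)$ is recognized as the total weighting of the contractible left ideal $H^g/\cat SG\pi$ (the weighting of $\cat SG\pi$ restricts to any left ideal, and $K//(H^g/\cat SG\pi) = K//\cat SG\pi$ by upward closure), hence equals $\chi(H^g/\cat SG\pi)=1$ for every $g$. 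Your argument avoids the coslice equivalence of categories and the appeal to the general $\mathrm{EI}$-weighting theorem, using only the poset weighting formula $k^K=-\rchi(K//P)$ and the ideal-restriction property; in exchange it requires the explicit double count. What the paper's approach buys is brevity, since the coslice equivalence together with the cited theorem yields the weighting in one stroke and simultaneously explains where the value $-\rchi(H//\cat SG\pi)$ comes from; what yours buys is self-containedness, making the underlying arithmetic (essentially the same computation as Equation~\eqref{eq:affine} in the single-prime case) completely explicit. The final conversion to the conjugacy-class sum via $|[H]|/|G:H| = 1/|N_G(H):H|$ is correct and matches the paper.
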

\begin{proof}
  The objects of the category $H//\cat OG\pi$ are left cosets $gK$
  where $g$ is an element of $G$, $K$ is a $\pi$-singular subgroup of
  $G$, and $H \leq {}^gK$. The functor $H//\cat OG\pi \to H// \cat
  SG{\pi}$ taking $gK$ to ${}^gK$ is an equivalence of categories
  since it is surjective on objects and bijective on \m\ sets as $(H
  // \cat OG\pi)(g_1K_1,g_2K_2) = \cat
  SG\pi({}^{g_1}K_1,{}^{g_2}K_2)$. This gives the \we\ on $\cat OG\pi$
  by \cite[Theorem 3.7]{gm:2012}.
\end{proof}

\begin{lemma}\label{lemma:weSGpi}
  The function $k^H = -\rchi(H// \cat SG{\pi}) = \sum_{H \leq K \in
    \cat SG\pi} \mu(H,K)$ is the \we\ on $\cat SG{\pi}$.
\end{lemma}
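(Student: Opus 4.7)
The plan is to establish two claims: the identity $-\rchi(H/\!/ \cat SG\pi) = \sum_{H \leq K} \mu(H,K)$, and then that this common value defines a weighting on $\cat SG\pi$. Uniqueness of the weighting on the finite poset $\cat SG\pi$ (cf.~\cite[Theorem~3.7]{gm:2012}) then closes the argument.

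For the first identity, observe that $H/\!/\cat SG\pi$ is just the strict upper set $\cat SG\pi_{>H}$ of $\pi$-subgroups properly containing $H$. Adjoin a formal top element $\hat 1$ to the principal filter $\cat SG\pi_{\geq H}$ to obtain a bounded poset $\hat P$, so that $\cat SG\pi_{>H}$ is the open interval $(H,\hat 1)_{\hat P}$. The identity $\chi(a,b)-\mu(a,b)=1$ derived at the start of \S\ref{sec:centposet} (applied to $a=H$, $b=\hat 1$ in $\hat P$) gives $\rchi(\cat SG\pi_{>H}) = \mu_{\hat P}(H,\hat 1)$. The defining relation $\sum_{H \leq L \leq \hat 1}\mu_{\hat P}(H,L)=0$ then yields
\begin{equation*}
\mu_{\hat P}(H,\hat 1) \;=\; -\sum_{L \in \cat SG\pi,\; L \geq H}\mu_{\hat P}(H,L) \;=\; -\sum_{L \geq H}\mu(H,L),
\end{equation*}
the last equality because the intervals $[H,L]$ for $L \in \cat SG\pi$ lie entirely inside $\cat SG\pi$, so the Möbius functions of $\hat P$ and $\cat SG\pi$ agree there. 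Negating gives $-\rchi(H/\!/\cat SG\pi)=\sum_{H\leq K}\mu(H,K)$.

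For the weighting condition, set $k^K := \sum_{L \geq K}\mu(K,L)$ and compute, for any $H \in \cat SG\pi$,
\begin{equation*}
\sum_{K \geq H} k^K \;=\; \sum_{K \geq H}\sum_{L \geq K}\mu(K,L) \;=\; \sum_{L \geq H}\sum_{H \leq K \leq L}\mu(K,L) \;=\; \sum_{L \geq H}\delta(H,L) \;=\; 1,
\end{equation*}
where the inner sum is evaluated by the defining property of $\mu$ as the convolution inverse of $\zeta$ in the incidence algebra. Thus $k^{\bullet}$ satisfies the defining equation of a weighting, and by uniqueness it must equal the weighting on $\cat SG\pi$.

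No step is truly hard; the only bookkeeping issue is verifying in the first step that the Möbius function is unaffected by passing to $\hat P$, which is immediate from the observation that any interval $[H,L]$ with $L \in \cat SG\pi$ avoids the adjoined top $\hat 1$.
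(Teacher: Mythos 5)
Your proof is correct and reaches the same formula the paper does, but it replaces both of the paper's citations with direct arguments. The paper's proof is a one-liner: $\cat SG\pi$ is downward closed (a right ideal) in $\cat SG{}$, so the M\"obius function restricts; $k^H = \sum_{K\geq H}\mu(H,K)$ is then recognized as the weighting; and the coslice formula $-\rchi(H//\cat SG\pi)$ is quoted from \cite[Theorem~3.7]{gm:2012}. You instead derive the coslice--M\"obius identity from scratch by adjoining a formal top element $\hat 1$ to the filter of $\pi$-subgroups containing $H$ and reusing the $\rchi(a,b)=\mu(a,b)$ computation from the opening of \S\ref{sec:centposet}, and you then verify the weighting equation $\sum_{K\geq H}k^K=1$ directly from the convolution identity $\zeta\ast\mu=\delta$ in the incidence algebra. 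Both versions ultimately rest on the same elementary manipulations of the defining relation for $\mu$; yours is a few lines longer but entirely self-contained (you invoke \cite[Theorem~3.7]{gm:2012} only for uniqueness of the weighting, which for a finite poset is just invertibility of the upper-triangular $\zeta$-matrix), while the paper trades completeness for brevity. One small expository gap you might close: the lemma's $\mu$ can be read either as the M\"obius function of $\cat SG\pi$ itself or as the restriction of that of $\cat SG{}$ --- the two agree precisely because $\cat SG\pi$ is downward closed, which is the content of the paper's opening sentence --- and your argument silently takes the former reading throughout.
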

\begin{proof}
  The \Mb\ function $\mu$ for $\cat SG{}$ restricts to the \Mb\
  function for the right ideal $\cat SG{\pi}$. Thus the \we\ is $k^H =
  \sum_{H \leq K \in \cat SG\pi} \mu(H,K)$. The \we\ may also be
  expressed as $-\chi(H// \cat SG{\pi})$ \cite[Theorem 3.7]{gm:2012}.
\end{proof}

We can now generalize Equations~\eqref{eq:affine} and
\eqref{eq:globalnoneq} to sets $\pi$ of maybe more than one prime.

\begin{cor}\label{cor:chiOGpi}
  For any finite group $G$, any set $\pi$ of primes, and any
  $\pi$-subgroup $H \in \cat SG\pi$ of $G$,
  \begin{equation*}
  \sum_{K \in H / \cat SG{\pi}} -\rchi( K // \cat SG{\pi}) = 1, \qquad
  \sum_{H \in \cat SG{\pi}} -\rchi(H// \cat SG{\pi})|H| = |G_\pi|
\end{equation*}
\end{cor}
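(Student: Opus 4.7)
The plan is to derive both identities directly from the weighting/coweighting results for $\cat SG\pi$ and $\cat OG\pi$ already established in Lemmas~\ref{lemma:coweOGpi}, \ref{lemma:weOGpi}, and \ref{lemma:weSGpi}. There is essentially nothing new to prove; both equations are repackagings of the two independent calculations of $\chi(\cat OG\pi)$, one via the coweighting and one via the weighting.

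For the first identity, I would recall that Lemma~\ref{lemma:weSGpi} identifies the weighting on the poset $\cat SG\pi$ as $k^K = -\rchi(K//\cat SG\pi)$. The defining relation for a weighting on a poset, applied at the element $H \in \cat SG\pi$, is $\sum_{K \geq H} k^K = 1$, and the indexing set $\{K \in \cat SG\pi : H \leq K\}$ is by definition the coslice $H/\cat SG\pi$. This yields the first equation with no further work.

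For the second identity, I would compute $\chi(\cat OG\pi)$ in two ways. On the one hand, Lemma~\ref{lemma:coweOGpi} gives $\chi(\cat OG\pi) = |G_\pi|/|G|$ by summing the coweighting. On the other hand, Lemma~\ref{lemma:weOGpi} gives the weighting $k^H = -\rchi(H//\cat SG\pi)/|G:H|$ on $\cat OG\pi$, so
\begin{equation*}
\chi(\cat OG\pi) = \sum_{H \in \cat SG\pi} k^H = \sum_{H \in \cat SG\pi} \frac{-\rchi(H//\cat SG\pi)\,|H|}{|G|}.
\end{equation*}
Equating the two expressions and clearing the factor $|G|$ produces the second identity.

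Since every ingredient is already packaged into the preceding two lemmas, there is no real obstacle; the only care needed is to match the indexing set $H/\cat SG\pi$ in the statement with the coslice interpretation used in the weighting axiom, and to keep track of the factor $|G:H|$ when transporting the weighting from $\cat OG\pi$ back to $\cat SG\pi$.
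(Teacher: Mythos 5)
Your proposal is correct and follows essentially the same route as the paper: the first identity is the weighting axiom for $\cat SG\pi$ evaluated at $H$ (the paper phrases this as restricting the weighting to the contractible left ideal $H/\cat SG\pi$, which amounts to the same thing), and the second identity is obtained by equating the two computations of $\chi(\cat OG\pi)$ from Lemmas~\ref{lemma:coweOGpi} and~\ref{lemma:weOGpi}. No gaps.
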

\begin{proof}
  The \we\ for $\cat SG\pi$ restricts to a \we\ for the contractible
  left ideal $H / \cat SG{\pi}$. Thus $1= \sum_{K \in H / \cat SG\pi }
  k^K = \sum_{K \in H / \cat SG\pi } -\rchi(K // \cat SG{\pi}) =
  \sum_{[K] \in \cat SG\pi/K } \cat SG\pi(H,[K])(-\rchi(K // \cat
  SG{\pi}))$.  This proves the first equation.  Combine
  Lemmas~\ref{lemma:coweOGpi} and \ref{lemma:weOGpi} to prove the
  second equation by computing the \Euc\ of $\cat OG\pi$ in two
  different ways.
\end{proof}

Corollary~\ref{cor:chiOGpi} shows that the linear equation
\begin{equation}\label{eq:SGH[K]pi}
  \begin{pmatrix}
    \cat SG\pi(H,[K])
  \end{pmatrix}_{H,[K] \in \cat SG\pi/G}
\begin{pmatrix}
  \vdots \\ -\rchi(K // \cat SG\pi) \\ \vdots
\end{pmatrix}_{K \in \cat SG\pi/G} =
\begin{pmatrix}
  \vdots \\ 1 \\ \vdots
\end{pmatrix}
\end{equation}
determines the \we\ for $k^{[K]} = -\rchi(K//\cat SG\pi) \colon \cat
SG\pi/G \to \Z$ for $\cat SG\pi/G$ (Definition~\ref{defn:wecoweSA}).
The next proposition offers a sufficient condition for the vanishing
of $k^{[K]}$.

\begin{prop}\label{prop:zerowe}
  If $H$ is a \syl\pi\ in $G$ then $H// \cat SG{\pi}$ is empty and
  $-\rchi(H// \cat SG{\pi}) = 1$.  If $H$ is not a \syl\pi\ of $O(G,H)
  = \bigcap_{K \in H// \cat SG{\pi}} N_G(K)$ then $H // \cat SG{\pi}$
  is contractible and $-\rchi(H// \cat SG{\pi})=0$.
\end{prop}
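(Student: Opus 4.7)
The plan is to handle the two cases separately, both reducing to elementary observations about the strict coslice $H // \cat SG{\pi}$, whose objects are the $\pi$-subgroups of $G$ that properly contain $H$.

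For the first assertion, if $H$ is a Sylow $\pi$-subgroup of $G$, then no $\pi$-subgroup of $G$ strictly contains $H$, so $H // \cat SG{\pi}$ is empty. Thus $\chi(H // \cat SG{\pi}) = 0$ and $-\rchi(H // \cat SG{\pi}) = 1$.

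For the second assertion, I would first observe that $H \leq O(G,H)$: indeed, every $K \in H // \cat SG{\pi}$ contains $H$ as a subgroup, hence $H$ normalizes $K$, so $H \leq \bigcap_K N_G(K) = O(G,H)$. Thus $H$ is a $\pi$-subgroup of $O(G,H)$. By assumption $H$ is not Sylow in $O(G,H)$, so there exists a $\pi$-subgroup $P$ of $O(G,H)$ with $H \lneqq P$. The key point is then that $P$ normalizes every $K \in H // \cat SG{\pi}$ (by definition of $O(G,H)$), so the product $PK$ is a subgroup of $G$; it is also a $\pi$-group since both $P$ and $K$ are. Since $H < K \leq PK$, we have $PK \in H // \cat SG{\pi}$, and similarly $P \in H // \cat SG{\pi}$.

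Now the poset inequalities $K \leq PK \geq P$, which are morphisms in $H // \cat SG{\pi}$ and hence give natural transformations between the identity functor and two functors landing at the constant value $P$, witness a conical contraction of $H // \cat SG{\pi}$ onto the object $P$. Therefore $B(H // \cat SG{\pi}) \simeq \ast$, so $\chi(H // \cat SG{\pi}) = 1$ and $-\rchi(H // \cat SG{\pi}) = 0$. The only step that requires any real care is verifying that $P$ normalizes every $K \in H // \cat SG{\pi}$ so that $PK$ is a $\pi$-subgroup lying in $H // \cat SG{\pi}$; this is precisely what the definition of $O(G,H)$ gives, and once it is in hand the standard Bouc/Quillen zigzag argument finishes the proof.
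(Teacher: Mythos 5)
Your proof is correct and takes essentially the same approach as the paper: the paper's proof also extracts a $\pi$-subgroup $L > H$ inside $O(G,H)$ and contracts $H // \cat SG{\pi}$ via the zigzag $K \leq KL \geq L$. You have merely spelled out the elementary verifications (that $H \leq O(G,H)$, that $PK$ is a $\pi$-subgroup, and that both $P$ and $PK$ lie in the strict coslice) which the paper leaves implicit.
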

\begin{proof}
  The first assertion is clear. In the second case, the assumption is
  that $G$ possesses a $\pi$-subgroup $L>H$ normalizing all
  $\pi$-subgroups $K \geq H$. But then $K \leq KL \geq L$ is a
  contraction of $H// \cat SG{\pi}$.
\end{proof}

\begin{exmp}\label{exmp:chiSGL32pi} 
  The simple group $G=\GL 3{\F_2}$ of order $168$ contains $12$
  conjugacy classes of $\pi$-subgroups where $\pi=\{2,3\}$.  The table
  \begin{center}
  \begin{tabular}[t]{>{$}c<{$} | *{12}{>{$}c<{$}} }    
     |H|  &
      24 & 24 & 12 & 12 & 8 & 6 & 4 & 4 & 4 & 3 & 2 & 1 \\ 
        \hline \noalign{\smallskip}
     -\rchi(H // \cat S{G}{\pi}) &
   1  & 1 &   0 &  0 &  -1 &  -1 &  0 &  0 &  0 &  0 &  4 & -48 \\
   |G:N_G(H)| &
   7  & 7  & 7  & 7  &21  &28  & 7  & 7  &21  &28  &21  & 1 \\
   |N_G(H) : H|_\pi & 
   1 & 1 & 2 & 2 & 1 & 1 & 6 & 6 & 2 & 2 & 4 &24 
 \end{tabular} 
 \end{center}
 shows the orders and lengths of the conjugacy classes of
 $\pi$-subgroups $H$ of $G$ together with the \we\ values
 $k^H=-\rchi(H // \cat S{G}{\pi})$ and the $\pi$-part of $|N_G(H):H|$.
 The criterion of Proposition~\ref{prop:zerowe} accounts for four of
 the six $0$s in the first row of the table and for the two $1$s at
 the \syl\pi s (abstractly isomorphic to $\Sigma_4$).  These \Euc s
 were determined as the solution to the linear
 equation~\eqref{eq:SGH[K]pi}
\begin{equation*}
\left(\begin{array}{*{12}{c}}  
 1  & 0  & 0  & 0  & 0  & 0  & 0  & 0  & 0  & 0  & 0  & 0 \\
 0  & 1  & 0  & 0  & 0  & 0  & 0  & 0  & 0  & 0  & 0  & 0 \\
 0  & 1  & 1  & 0  & 0  & 0  & 0  & 0  & 0  & 0  & 0  & 0 \\
 1  & 0  & 0  & 1  & 0  & 0  & 0  & 0  & 0  & 0  & 0  & 0 \\
 1  & 1  & 0  & 0  & 1  & 0  & 0  & 0  & 0  & 0  & 0  & 0 \\
 1  & 1  & 0  & 0  & 0  & 1  & 0  & 0  & 0  & 0  & 0  & 0 \\
 1  & 3  & 0  & 1  & 3  & 0  & 1  & 0  & 0  & 0  & 0  & 0 \\
 3  & 1  & 1  & 0  & 3  & 0  & 0  & 1  & 0  & 0  & 0  & 0 \\
 1  & 1  & 0  & 0  & 1  & 0  & 0  & 0  & 1  & 0  & 0  & 0 \\
 1  & 1  & 1  & 1  & 0  & 1  & 0  & 0  & 0  & 1  & 0  & 0 \\
 3  & 3  & 1  & 1  & 5  & 4  & 1  & 1  & 1  & 0  & 1  & 0 \\
 7  & 7  & 7  & 7  &21  &28  & 7  & 7  &21  &28  &21  & 1
\end{array}\right)
\left(\begin{array}{r}
   1 \\  1 \\   0  \\ 0 \\ -1 \\  -1 \\  0 \\  0 \\  0 \\  0 \\  4 \\ -48
\end{array}\right) =
\begin{pmatrix}
  1 \\ 1 \\ 1 \\ 1 \\ 1 \\ 1 \\ 1 \\ 1 \\ 1 \\ 1 \\ 1 \\ 1 
\end{pmatrix}
\end{equation*}
where the $(12 \times 12)$-matrix is the table $\left( \cat
  SG{\pi}(H,[K]) \right)_{H,K \in \cat SG{\pi+\mathrm{rad}}/G}$.  In
particular, $-\rchi(1//\cat SG\pi) = -\rchi(\cat SG{\pi+*}) = -48$ and
$\chi(\cat SG{\pi+*}) = 1-(-48) = 49$.  Since
\begin{equation*}
  \sum_{[H] \in \cat SG{\pi}/G}
  -\rchi(H // \cat SG{\pi})|G:N_G(H)||H| = 1 \cdot 7 \cdot 24 + \cdots
  + (-48) \cdot 1 \cdot 1 = 120 
\end{equation*}
and $G$ indeed contains $120$ $\pi$-singular elements the numbers
of this table agrees with Corollary~\ref{cor:chiOGpi}. 
 \end{exmp}

An even more general form of
the second formula of Corollary~\ref{cor:chiOGpi} was first proved in 
\cite[Theorem 6.3]{HIO89}. 
In the table of Example~\ref{exmp:chiSGL32pi} it happens that
$-\rchi(H//\cat SG\pi)$ is always divisible by the $\pi$-part of
$|N_G(H):H|$. This is no concidence as shown by the following
generalization of Brown's theorem. 

\begin{thm}\cite[Corollary~3.9]{HIO89}
  $|N_G(H) : H |_{\pi} \mid -\rchi(H// \cat SG{\pi})$ for any
  $\pi$-subgroup $H$ of $G$.
\end{thm}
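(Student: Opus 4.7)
The plan is to adapt the Quillen-style argument from the second proof of Theorem~\ref{thm:global}.\eqref{thm:global2}, handling one prime $p \in \pi$ at a time. Since the $p$-parts $|L|_p$ of a finite group $L$ for distinct primes $p$ are pairwise coprime, it suffices to prove $|N_G(H):H|_p \mid \rchi(H//\cat SG\pi)$ for each $p \in \pi$ separately, and then take the product.

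Fix $p \in \pi$, set $L = N_G(H)/H$, and choose a Sylow $p$-subgroup $P \leq L$ with preimage $\tilde P \leq N_G(H)$. The hypothesis that $H$ is a $\pi$-subgroup, together with $\tilde P/H = P$ being a $p$-group, forces $\tilde P$ itself to be a $\pi$-subgroup of $G$. Conjugation makes $N_G(H)$ act on the poset $H//\cat SG\pi$, and since every $K \in H//\cat SG\pi$ contains $H$, this action descends to $L$; restricting along $P \hookrightarrow L$ I make $P$ act on the order complex $Y = \mathrm{sd}(H//\cat SG\pi)$ and put
\[
Y' = \bigcup_{X \in \cat SP*} C_Y(X), \qquad
Z = \{(X,y) \in \cat SP* \times Y' \mid X \leq P_y\},
\]
exactly as in Quillen's original proof. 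The key input for \cite[Corollary~1.8]{quillen78} will be that for every nontrivial $X \leq P$, with preimage $\tilde X \leq \tilde P$ (again a $\pi$-subgroup of $G$), the subposet $C_{H//\cat SG\pi}(\tilde X)$ of $\tilde X$-normalized elements is conically contractible via the natural transformations $K \leq K\tilde X \geq \tilde X$: the product $K\tilde X$ is a subgroup because $\tilde X$ normalizes $K$, its order $|K||\tilde X|/|K\cap\tilde X|$ is a $\pi$-number, and $\tilde X$ itself belongs to $C_{H//\cat SG\pi}(\tilde X)$ because $X$ is nontrivial. Combined with the trivial contractibility of $\cat S{P_y}*$ for $y \in Y'$, Quillen's comparison yields $Y' \simeq \cat SP* \simeq *$; hence $\chi(Y') = 1$ and $\rchi(Y) = \chi(Y,Y')$. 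By construction $P$ acts freely on $Y - Y'$, so $|P| = |N_G(H):H|_p$ divides $\chi(Y,Y') = \rchi(H//\cat SG\pi)$.

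The main obstacle will be verifying that the conical contraction $K \leq K\tilde X \geq \tilde X$ stays inside $H//\cat SG\pi$, and this is precisely where the $\pi$-subgroup hypothesis on $H$ is indispensable: without it $\tilde X$ would not be a $\pi$-group and $K\tilde X$ would generally leave $\cat SG\pi$. Everything else is a verbatim transcription of the Quillen--Brown argument with ``$p$-subgroup'' replaced by ``$\pi$-subgroup''.
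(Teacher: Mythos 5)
Your proof is correct. The paper states this theorem only as a citation to \cite{HIO89} (``On the M\"obius function of a finite group'') and gives no proof of its own, so there is no in-text argument to compare against; what you have done is transplant the Quillen-style fixed-point method the paper itself uses in its second proof of Theorem~\ref{thm:global}.\eqref{thm:global2}. Two adaptations make the transplant work: (i) reducing to one prime $p \in \pi$ at a time, which is legitimate because $|N_G(H):H|_\pi = \prod_{p \in \pi}|N_G(H):H|_p$ is a product of pairwise coprime divisors of $\rchi(H//\cat SG\pi)$; and (ii) the observation, and this is exactly where the hypothesis that $H$ is a $\pi$-subgroup is used, that the full preimage $\tilde X \leq N_G(H)$ of a nontrivial $p$-subgroup $X \leq N_G(H)/H$ is itself a $\pi$-subgroup with $H \lneqq \tilde X$, so the conical contraction $K \leq K\tilde X \geq \tilde X$ of the $X$-fixed subposet stays inside $H//\cat SG\pi$. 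You have also correctly identified the endpoint $\tilde X$ as an element of the fixed poset, which is essential for the contraction; in the analogous step of the paper's argument for Theorem~\ref{thm:global}.\eqref{thm:global2} the corresponding point was that $X \leq C_G(A)$ is $A$-normalized. The argument of \cite{HIO89} is combinatorial, working with the M\"obius function of the subgroup lattice, whereas your version derives the divisibility geometrically from the freeness of the Sylow $p$-subgroup of $N_G(H)/H$ acting on the complement of the singular locus in the subdivided poset; this topological route gives a conceptual, homotopy-theoretic reason for the $\pi$-hypothesis that the arithmetic proof does not make visible.
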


 When $\pi$ consists of just one prime the \Euc\ of $H// \cat SG{\pi}$
 is a function of the quotient group $N_G(H)/H$
 (Proposition~\ref{prop:weOGp}) but this is not true when $\pi$ has
 more than one element.


\section*{Acknowledgements}

I would like to thank Radha Kessar and J{\o}rn B{\o}rling Olsson for
valuable help and Sune Precht Reeh for a crucial remark.  Comments
from contributors to a post on the internet site {\em mathoverflow}
were also helpful.


\begin{thebibliography}{10}

\bibitem{abramenko_brown2008}
Peter Abramenko and Kenneth~S. Brown, \emph{Buildings}, Graduate Texts in
  Mathematics, vol. 248, Springer, New York, 2008, Theory and applications.
  \MR{2439729 (2009g:20055)}

\bibitem{alperin87}
J.~L. Alperin, \emph{Weights for finite groups}, The {A}rcata {C}onference on
  {R}epresentations of {F}inite {G}roups ({A}rcata, {C}alif., 1986), Proc.
  Sympos. Pure Math., vol.~47, Amer. Math. Soc., Providence, RI, 1987,
  pp.~369--379. \MR{933373 (89h:20015)}

\bibitem{AlpFong}
J.~L. Alperin and P.~Fong, \emph{Weights for symmetric and general linear
  groups}, J. Algebra \textbf{131} (1990), no.~1, 2--22. \MR{91h:20014}

\bibitem{atiyah&segal89}
Michael Atiyah and Graeme Segal, \emph{On equivariant {E}uler characteristics},
  J. Geom. Phys. \textbf{6} (1989), no.~4, 671--677. \MR{1076708 (92c:19005)}

\bibitem{magma}
Wieb Bosma, John Cannon, and Catherine Playoust, \emph{The {M}agma algebra
  system. {I}. {T}he user language}, J. Symbolic Comput. \textbf{24} (1997),
  no.~3-4, 235--265, Computational algebra and number theory (London, 1993).
  \MR{1484478}

\bibitem{bouc84a}
Serge Bouc, \emph{Homologie de certains ensembles ordonn\'es}, C. R. Acad. Sci.
  Paris S\'er. I Math. \textbf{299} (1984), no.~2, 49--52. \MR{756517
  (85k:20150)}

\bibitem{bredon72}
Glen~E. Bredon, \emph{Introduction to compact transformation groups}, Academic
  Press, New York-London, 1972, Pure and Applied Mathematics, Vol. 46.
  \MR{0413144 (54 \#1265)}

\bibitem{brown75}
Kenneth~S. Brown, \emph{Euler characteristics of groups: the {$p$}-fractional
  part}, Invent. Math. \textbf{29} (1975), no.~1, 1--5. \MR{0385008 (52
  \#5878)}

\bibitem{carter:lie}
Roger~W. Carter, \emph{Simple groups of {L}ie type}, Wiley Classics Library,
  John Wiley \& Sons Inc., New York, 1989, Reprint of the 1972 original, A
  Wiley-Interscience Publication. \MR{90g:20001}

\bibitem{cr}
Charles~W. Curtis and Irving Reiner, \emph{Representation theory of finite
  groups and associative algebras}, AMS Chelsea Publishing, Providence, RI,
  2006, Reprint of the 1962 original. \MR{2215618 (2006m:16001)}

\bibitem{frobenius:1907}
G.~Frobenius, \emph{{\"U}ber einen {F}undamentalsatz der {G}ruppentheorie,
  {I}{I}}, Sitzungsberichte der Preussischen Akademie Weissenstein (1907),
  428--437.

\bibitem{gm:2012}
Matthew Gelvin and Jesper~M. M{\o}ller, \emph{Homotopy equivalences between
  p-subgroup categories},
  \href{http://arxiv.org/abs/1301.0193}{arXiv:1301.0193} (to appear in JPAA).

\bibitem{gorenstein68}
Daniel Gorenstein, \emph{Finite groups}, Harper \& Row Publishers, New York,
  1968. \MR{MR0231903 (38 \#229)}

\bibitem{GLSIII}
Daniel Gorenstein, Richard Lyons, and Ronald Solomon, \emph{The classification
  of the finite simple groups. {N}umber 3. {P}art {I}. {C}hapter {A}},
  Mathematical Surveys and Monographs, vol.~40, American Mathematical Society,
  Providence, RI, 1998, Almost simple $K$-groups. \MR{MR1490581 (98j:20011)}

\bibitem{hatcher}
Allen Hatcher, \emph{Algebraic topology}, Cambridge University Press,
  Cambridge, 2002. \MR{2002k:55001}

\bibitem{HIO89}
T.~Hawkes, I.~M. Isaacs, and M.~{\"O}zaydin, \emph{On the {M}\"obius function
  of a finite group}, Rocky Mountain J. Math. \textbf{19} (1989), no.~4,
  1003--1034. \MR{MR1039540 (90k:20046)}

\bibitem{isaacsrobinson}
I.~M. Isaacs and G.~R. Robinson, \emph{On a theorem of {F}robenius: solutions
  of {$x^n=1$} in finite groups}, Amer. Math. Monthly \textbf{99} (1992),
  no.~4, 352--354. \MR{1157226 (93a:20034)}

\bibitem{isaacs}
I.~Martin Isaacs, \emph{Character theory of finite groups}, Academic Press
  [Harcourt Brace Jovanovich, Publishers], New York-London, 1976, Pure and
  Applied Mathematics, No. 69. \MR{0460423 (57 \#417)}

\bibitem{jmm_mwj:2010}
Martin~Wedel Jacobsen and Jesper~M. M{\o}ller, \emph{Euler characteristics and
  {M}\"obius algebras of {$p$}-subgroup categories}, J. Pure Appl. Algebra
  \textbf{216} (2012), no.~12, 2665--2696. \MR{2943749}

\bibitem{knorr_robinson:89}
Reinhard Kn{\"o}rr and Geoffrey~R. Robinson, \emph{Some remarks on a conjecture
  of {A}lperin}, J. London Math. Soc. (2) \textbf{39} (1989), no.~1, 48--60.
  \MR{989918 (90k:20020)}

\bibitem{kratzer_thevenaz84}
Charles Kratzer and Jacques Th{\'e}venaz, \emph{Fonction de {M}\"obius d'un
  groupe fini et anneau de {B}urnside}, Comment. Math. Helv. \textbf{59}
  (1984), no.~3, 425--438. \MR{761806 (86k:20011)}

\bibitem{kratzer_thevenaz85}
\bysame, \emph{Type d'homotopie des treillis et treillis des sous-groupes d'un
  groupe fini}, Comment. Math. Helv. \textbf{60} (1985), no.~1, 85--106.
  \MR{787663 (87b:06017)}

\bibitem{leinster08}
Tom Leinster, \emph{The {E}uler characteristic of a category}, Doc. Math.
  \textbf{13} (2008), 21--49,
  \href{http://www.math.uiuc.edu/documenta/vol-13/02.pdf}{Doc. Math.}
  \MR{MR2393085}

\bibitem{michler&olsson91}
G.~O. Michler and J.~B. Olsson, \emph{Weights for covering groups of symmetric
  and alternating groups, {$p\not=2$}}, Canad. J. Math. \textbf{43} (1991),
  no.~4, 792--813. \MR{1127030 (92f:20014)}

\bibitem{pfeiffer97}
G{\"o}tz Pfeiffer, \emph{The subgroups of {$M_{24}$}, or how to compute the
  table of marks of a finite group}, Experiment. Math. \textbf{6} (1997),
  no.~3, 247--270. \MR{1481593 (98h:20032)}

\bibitem{quillen73}
Daniel Quillen, \emph{Higher algebraic {$K$}-theory. {I}}, Algebraic
  {$K$}-theory, {I}: {H}igher {$K$}-theories ({P}roc. {C}onf., {B}attelle
  {M}emorial {I}nst., {S}eattle, {W}ash., 1972), Springer, Berlin, 1973,
  pp.~85--147. Lecture Notes in Math., Vol. 341. \MR{0338129 (49 \#2895)}

\bibitem{quillen78}
\bysame, \emph{Homotopy properties of the poset of nontrivial {$p$}-subgroups
  of a group}, Adv. in Math. \textbf{28} (1978), no.~2, 101--128. \MR{MR493916
  (80k:20049)}

\bibitem{robinson:groups}
Derek J.~S. Robinson, \emph{A course in the theory of groups}, second ed.,
  Springer-Verlag, New York, 1996. \MR{96f:20001}

\bibitem{serre77}
Jean-Pierre Serre, \emph{Linear representations of finite groups},
  Springer-Verlag, New York, 1977, Translated from the second French edition by
  Leonard L. Scott, Graduate Texts in Mathematics, Vol. 42. \MR{MR0450380 (56
  \#8675)}

\bibitem{serre:galois}
\bysame, \emph{Galois cohomology}, english ed., Springer Monographs in
  Mathematics, Springer-Verlag, Berlin, 2002, Translated from the French by
  Patrick Ion and revised by the author. \MR{1867431 (2002i:12004)}

\bibitem{swominska01}
Jolanta S{\l}omi{\'n}ska, \emph{Homotopy decompositions of orbit spaces and the
  {W}ebb conjecture}, Fund. Math. \textbf{169} (2001), no.~2, 105--137.
  \MR{1852376 (2002f:55033)}

\bibitem{spanier}
Edwin~H. Spanier, \emph{Algebraic topology}, Springer-Verlag, New York, 1981,
  Corrected reprint. \MR{83i:55001}

\bibitem{stanley97}
Richard~P. Stanley, \emph{Enumerative combinatorics. {V}ol. 1}, Cambridge
  Studies in Advanced Mathematics, vol.~49, Cambridge University Press,
  Cambridge, 1997, With a foreword by Gian-Carlo Rota, Corrected reprint of the
  1986 original. \MR{MR1442260 (98a:05001)}

\bibitem{stanley99}
\bysame, \emph{Enumerative combinatorics. {V}ol. 2}, Cambridge Studies in
  Advanced Mathematics, vol.~62, Cambridge University Press, Cambridge, 1999,
  With a foreword by Gian-Carlo Rota and appendix 1 by Sergey Fomin.
  \MR{1676282 (2000k:05026)}

\bibitem{symonds98}
Peter Symonds, \emph{The orbit space of the {$p$}-subgroup complex is
  contractible}, Comment. Math. Helv. \textbf{73} (1998), no.~3, 400--405.
  \MR{1633359 (99f:20033)}

\bibitem{thevenaz92poly}
Jacques Th{\'e}venaz, \emph{Polynomial identities for partitions}, European J.
  Combin. \textbf{13} (1992), no.~2, 127--139. \MR{1158806 (93j:11069)}

\bibitem{thevenaz93Alperin}
\bysame, \emph{Equivariant {$K$}-theory and {A}lperin's conjecture}, J. Pure
  Appl. Algebra \textbf{85} (1993), no.~2, 185--202. \MR{1207508 (94c:20022)}

\bibitem{webb87}
P.~J. Webb, \emph{A local method in group cohomology}, Comment. Math. Helv.
  \textbf{62} (1987), no.~1, 135--167. \MR{882969 (88h:20065)}

\bibitem{webb91}
\bysame, \emph{A split exact sequence of {M}ackey functors}, Comment. Math.
  Helv. \textbf{66} (1991), no.~1, 34--69. \MR{1090164 (92c:20095)}

\end{thebibliography}

\def\cprime{$'$} \def\cprime{$'$} \def\cprime{$'$} \def\cprime{$'$}
  \def\cprime{$'$}
\providecommand{\bysame}{\leavevmode\hbox to3em{\hrulefill}\thinspace}
\providecommand{\MR}{\relax\ifhmode\unskip\space\fi MR }
\providecommand{\MRhref}[2]{%
  \href{http://www.ams.org/mathscinet-getitem?mr=#1}{#2}
}
\providecommand{\href}[2]{#2}

\end{document}